\documentclass[final]{siamart251216}

\usepackage[utf8]{inputenc}
\usepackage[T1]{fontenc}
\usepackage[english]{babel}
\usepackage{amssymb,mathtools}   
\usepackage[numbers,sort&compress]{natbib}
\usepackage{booktabs}
\usepackage{array}
\usepackage{enumitem}
\usepackage{listings}
\usepackage{subcaption}

\headers{Threshold Dynamics of Voter Radicalization}{A. Omelchenko}

\newcommand{\R}{\mathbb{R}}
\newcommand{\Simp}{\Sigma_2}   

\newsiamremark{remark}{Remark}

\title{Threshold Dynamics of Voter Radicalization on the Probability Simplex}

\author{Alexander Omelchenko%
\thanks{Constructor University Bremen gGmbH,
Campus Ring 1, 28759 Bremen, Germany
(\email{aomelchenko@constructor.university}).}}

\begin{document}
\maketitle

\begin{abstract}
We analyse two coupled ODE models of political competition on invariant
probability simplices with a conserved electorate. The baseline three-group
model tracks left-radical, centrist, and right-radical voter shares. We
characterise the unique interior equilibrium by a Perron--Frobenius threshold,
establish global asymptotic stability in the symmetric and asymmetric cases,
and exclude periodic orbits unconditionally via the Dulac criterion. A
structural consequence is that the baseline model cannot produce irreversible
centrist decline, history-dependent long-run floors, or multiple attractors.
We then extend the model with a disengaged voter compartment and distinguish
pure state shocks from permanent structural parameter shifts. The post-shock
dynamics are governed by the same spectral threshold: below it the centrist
state is globally asymptotically stable; above it every trajectory with a
nonzero radical seed converges to the unique radicalised equilibrium.
Cumulative sub-threshold structural shifts can cross the threshold and produce
staircase dynamics absent from the baseline; the symmetric reduction yields
closed-form expressions for the critical shock amplitude and the radicalization
window.
\end{abstract}

\begin{keywords}
political polarisation, dynamical systems, simplex model,
Perron--Frobenius theorem, transcritical bifurcation, Dulac criterion,
radicalization
\end{keywords}

\begin{MSCcodes}
34D23, 34C23, 37C75, 37N40, 91B14
\end{MSCcodes}

\section{Introduction}

\subsection{Empirical background}

Across Western democracies, radical movements on both the left and the right
have grown while the mainstream centre has weakened.  Comparative evidence for
this broad trend appears in studies of populist mobilisation, ideological
sorting, and party-system polarisation \cite{mudde2019,pew2014,dalton2021,norris2019}.
A recurring empirical regularity is \emph{asymmetric mobility}: voters move
mainly between the centre and a radical camp, but only rarely directly between
the two radical wings.  This motivates a minimal three-group representation
with left-radical, centrist, and right-radical shares.

The post-2008 period adds a second feature that a baseline model should either
explain or formally fail to explain: repeated crises are often followed not by
full recovery but by a higher post-crisis floor of radical support.  Germany
and France provide prominent examples.  In Germany, the AfD moved from 4.7\%
in 2013 to 12.6\% in 2017, 10.3\% in 2021, and 20.8\% in the 2025 federal
election \cite{bundeswahlleiterin2025}.  France provides a second illustration: both a left-wing radical bloc
(LFI) and a right-wing radical bloc (RN) strengthened substantially
after 2012 while mainstream parties lost ground~\cite{ministere2024}.
Because presidential, legislative, and European elections operate under
different electoral rules, we treat French data only as qualitative
motivation and not as a homogeneous time series.  The qualitative
feature of interest is not any single election result but the apparent ratchet
pattern: after successive crises, radical support tends to settle at a higher
floor.

A conceptual gap underlies much of the existing discussion.  Most
commentary on European radicalisation asks \emph{how much} radical
support has grown, but not \emph{whether} that growth is reversible.
The two questions require different analyses.  A transient surge driven
by crisis disengagement can dissipate as the crisis fades, even if it
looks alarming at its peak.  A structural shift in the recruitment
environment---lower institutional trust, higher salience of
identity-based grievances, weakened centrist party organisations---may
persist indefinitely regardless of whether the triggering crisis is
resolved. To our knowledge, no existing model within the compartmental
conserved-electorate ODE framework combines reversible state
shocks, permanent structural parameter shifts, and a full
qualitative analysis on the simplex. The present paper proposes one such formulation.

This paper develops a hierarchy of ODE models on the probability simplex to
separate these two questions.  The baseline model captures normal
self-stabilising competition under a conserved electorate.  The extended
four-group model adds a disengaged compartment and distinguishes state shocks
from structural shocks, thereby allowing irreversible shifts and staircase
dynamics.  We stress at the outset that this is a theorem-led stylised model:
our goal is to identify the minimal structural conditions under which
radicalization can be irreversible, not to provide a calibrated causal account
of specific elections.

\subsection{Related modelling literature}

Bounded-confidence and network models describe individual opinion formation and
cluster formation \cite{deffuant2000,hegselmann2002,lorenz2007,baumann2020}, but
aggregate vote-share dynamics are harder to analyse sharply in that setting.
Aggregate ODE models closer in spirit to the present paper include the
satisficing two-party model of \cite{yang2020}, which tracks ideological
\emph{positions} of parties rather than voter shares, and the three-group
sociophysics model of \cite{diep2024}, applied to the 2024 US election.

The most directly comparable line of work uses compartmental models
inspired by epidemiological SIS dynamics to track voter-share fractions.
Volkening, Linder, Porter, and Rempala \cite{volkening2020} introduced the
cRUD framework for two-party US elections: Democratic ($D$), Republican ($R$),
and undecided ($U$) fractions satisfy a conserved-electorate ODE on the
probability simplex, with parameters fitted to state-level polling data and
10\,000 stochastic simulations used to produce probabilistic forecasts.
Branstetter et al.\ \cite{branstetter2024} extended that approach to
presidential, senatorial, and gubernatorial races from 2004 to 2022, also
published in this journal.

The present paper shares the simplex constraint and the conserved-electorate
assumption with the Volkening--Branstetter line, but differs from it in
question, scope, and method in four respects.

\textit{Question.}  The cRUD work asks who wins a given election
(a forecasting question answered by parameter fitting and Monte Carlo
simulation).  The present paper asks whether the system has a stable
centrist equilibrium, what bifurcations can disrupt it, and whether crises
can permanently shift the long-run attractor (a structural-stability question
answered by proof).

\textit{Model.}  The cRUD model has two decided compartments ($D$, $R$) with
undecided voters as the mediating pool.  The present model has three to four
compartments with cross-wing \emph{reactive polarisation} terms
$\gamma_{LR}$ and $\gamma_{RL}$: a voter moving to the right radical camp
directly amplifies left-radical recruitment and vice versa, a feedback absent
from SIS-type two-party models.

\textit{Geography and party system.}  The cRUD line is calibrated to the
US two-party system.  The present model targets multi-party European systems
where a radical left and a radical right can grow simultaneously while the
mainstream centre weakens---the configuration most visible in France
(simultaneous RN and LFI growth) and Germany (AfD alongside persistent
left-wing support).

\textit{Mathematical completeness.}  The cRUD line pursues a forecasting
goal and does not aim at qualitative theory; accordingly, global asymptotic
stability, bifurcation conditions, and exclusion of periodic orbits lie
outside its scope.  The present paper proves all three.  In particular, the
Dulac function $B=(LRC)^{-1}$ yields a strictly negative divergence on the
entire interior of the simplex for \emph{all} positive parameters---a result
that, to our knowledge, does not appear in the compartmental-election
literature.  In the four-group extension we further
prove that the distinction between reversible surges and irreversible political
shifts is governed by a spectral threshold, which in the symmetric case
reduces to the explicit formula $\Delta_c = (\mu-\beta)/(\delta-\beta)$.

The paper also connects to the literature on dynamical systems on simplices and
monotonicity-type arguments \cite{hirsch1988,zeeman1993}.  Our model is
not competitive in the strict Hirsch sense throughout $\Simp$, because the
off-diagonal Jacobian entries change sign across the interior.  The global
results therefore combine a Perron--Frobenius threshold, a positive-vector
Lyapunov argument, and the Dulac exclusion, rather than monotone-systems
theory.

A separate literature studies democratic erosion and institutional backsliding
\cite{haggard2021,grillo2024,torgler2024,abels2024}.  The extended model
below is closer to that literature in spirit, but remains explicitly tied to
voter flows between political compartments.

\subsection{Contribution and positioning}

The paper makes two main contributions.

First, for the baseline three-group model we give a qualitative analysis on
the simplex: forward invariance; global asymptotic stability in the symmetric
and asymmetric cases; an explicit equilibrium formula $C^*=\mu/(\alpha+\gamma)$
in the symmetric reduction; a Perron--Frobenius threshold in the asymmetric
case; a transcritical bifurcation at $\beta=\mu$; and unconditional exclusion
of periodic orbits via the Dulac function $B=(LRC)^{-1}$.  A structural
consequence is that the baseline model cannot generate staircase dynamics or
multiple attractors under fixed parameters.

Second, we introduce a four-group disengagement-shock extension with a
complete global classification: if $\mathcal R_{\mathrm{rad}}\le1$, the
centrist state is globally asymptotically stable; if $\mathcal
R_{\mathrm{rad}}>1$, every trajectory with a nonzero radical seed converges to
the unique radicalised equilibrium (Theorem~\ref{thm:global4}).  We further
derive a spectral shock threshold $\Delta_c^{\mathrm{asym}}$
(Theorem~\ref{thm:PF_Delta}), a Perron-vector window bound $t_q^*$
(Proposition~\ref{prop:PF_window}), and closed-form symmetric reductions for
the critical shock amplitude and radicalization window.

The threshold $\mathcal R_{\mathrm{rad}}=\lambda_{\mathrm{PF}}(M^{-1}K)$ is
formally analogous to the next-generation matrix of
\cite{diekmann1990,vandendriessche2002}.  The present paper goes beyond that
framework in three respects: the Dulac function $B=(LRC)^{-1}$ unconditionally
excludes periodic orbits on the full simplex interior; global asymptotic
stability is established on the entire simplex rather than locally; and the
four-group extension analyses threshold \emph{crossing} via permanent
structural shocks---questions outside the scope of the NGM apparatus.

The paper's primary aim is theorem-led: to identify the minimal structural
conditions under which radicalization is irreversible, not to provide a
calibrated causal account of specific elections.  The empirical illustrations
are qualitative only.

The paper is organised as follows.  
Sections~\ref{sec:model}--\ref{sec:numerics} develop and analyse the 
baseline model.  Section~\ref{sec:model4} introduces the four-group 
extension.  Section~\ref{sec:analysis4} provides the mathematical analysis.  
Section~\ref{sec:cumulative} derives the shock and cumulative-drift results.  
Section~\ref{sec:empirical} provides qualitative illustrations against German 
and French data.  Section~\ref{sec:numerics4} gives numerical illustrations.

\section{Baseline Model Formulation}\label{sec:model}

\subsection{State space and governing equations}

Let $L(t)$, $R(t)$, and $C(t)$ denote the left-radical, right-radical, and
centrist shares of a normalised electorate, with
\[
  L(t)\ge0,\qquad R(t)\ge0,\qquad C(t)=1-L(t)-R(t)\ge0.
\]
The feasible state space is the standard simplex
\[
  \Simp = \{(L,R)\in\R^2 : L\ge0,\;R\ge0,\;L+R\le1\}.
\]

\begin{definition}[The baseline model]\label{def:model}
The \emph{normalised three-group model} is the autonomous system
\begin{equation}\label{eq:model}
\begin{aligned}
  \dot{L} &= \alpha_L\,L\,C \;-\; \mu_L\,L \;+
             \gamma_{RL}\,R\,C, \\
  \dot{R} &= \alpha_R\,R\,C \;-
             \mu_R\,R \;+
             \gamma_{LR}\,L\,C,
\end{aligned}
\end{equation}
where $C = 1-L-R$ and $\alpha_L,\alpha_R,\mu_L,\mu_R,\gamma_{RL},\gamma_{LR}>0$.
\end{definition}

By construction, $\dot C=-\dot L-\dot R$, so $L+R+C=1$ is conserved.  The six
parameters of the baseline model are summarised in Table~\ref{tab:params}.

\subsection{Interpretation}

The terms $\alpha_i X_i C$ represent direct recruitment of centrists by the
corresponding radical camp, $-\mu_i X_i$ represents deradicalisation back to
the centre, and $\gamma_{ij} X_j C$ represents reactive polarisation: growth
of one radical wing pushes some centrists toward the opposite wing.  The
bilinear factors are the minimal mass-action closure compatible with
conservation and saturation: each flow vanishes when either the recruiting
population or the recruitable pool is absent.

We use $\gamma_{RL}RC$ rather than a wing-to-wing transfer term such as $RL$.
The intended mechanism is not direct switching between radical camps, which is
empirically rare, but movement of centrists toward one wing in response to the
visibility of the other.  This choice also preserves the simplex geometry used
in the analysis below: since $\gamma_{RL}RC\ge0$ vanishes on
the faces $R=0$ and $C=0$, the boundary of $\Simp$ remains
forward invariant (Proposition~\ref{prop:inv}).

\begin{table}[ht]
\caption{Summary of baseline model parameters.}\label{tab:params}
\centering
\renewcommand{\arraystretch}{1.4}
\begin{tabular}{lll}
\toprule
Parameter & Interpretation & Range \\
\midrule
$\alpha_L,\alpha_R$ & Direct recruitment rate (centrists $\to$ radicals) & $(0,\infty)$ \\
$\mu_L,\mu_R$       & Deradicalisation rate (radicals $\to$ centrists)    & $(0,\infty)$ \\
$\gamma_{RL}$       & Reactive polarisation: right growth $\to$ left       & $(0,\infty)$ \\
$\gamma_{LR}$       & Reactive polarisation: left growth $\to$ right       & $(0,\infty)$ \\
\bottomrule
\end{tabular}
\end{table}

\section{Mathematical Analysis of the Baseline Model}\label{sec:analysis}

The analysis follows a standard qualitative-dynamics programme for simplex
ODEs: (i)~forward invariance, (ii)~equilibrium classification,
(iii)~global stability via comparison and Lyapunov arguments,
(iv)~linearisation with explicit spectral data, and
(v)~Bendixson--Dulac to rule out periodic orbits.
Steps (i)--(iv) are carried out first for the symmetric reduction, where the
triangular structure of the $(S,D)$ system allows complete closed-form results;
step (v) is then proved for the full asymmetric model without symmetry
restrictions.

\subsection{Forward invariance}

\begin{proposition}[Forward invariance of $\Simp$]\label{prop:inv}
  For any $(L_0,R_0)\in\Simp$ and any positive parameters, the
  solution $(L(t),R(t))$ of \eqref{eq:model} remains in $\Simp$ for all $t\ge0$.
\end{proposition}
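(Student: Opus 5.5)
The plan is to use the standard tangency (Nagumo) criterion for closed convex sets, applied to each of the three faces of $\Simp$, with local existence coming for free. The right-hand side of \eqref{eq:model} is a polynomial in $(L,R)$, hence locally Lipschitz on $\R^2$. Local existence and uniqueness therefore hold. Once invariance is established, the trajectory stays in the compact set $\Simp$, which rules out blow-up and extends the solution to all $t\ge0$.

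For the tangency condition we must show that the vector field points into (or along) $\Simp$ on each face. On the face $L=0$ (with $R\ge0$, $C\ge0$) we have $\dot L=\gamma_{RL}RC\ge0$. Symmetrically, on $R=0$ we have $\dot R=\gamma_{LR}LC\ge0$. On the face $C=0$, i.e.\ $L+R=1$, every term carrying a factor $C$ vanishes. What remains is $\dot C=-\dot L-\dot R=\mu_L L+\mu_R R\ge0$. Thus on each supporting line the normal component of the field is nonnegative, which by Nagumo's theorem (or Bony--Brezis) gives forward invariance of the closed convex set $\Simp$.

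The main subtlety is that the inequalities above are only weak. For instance, at the vertex $(0,0)$ we have $\dot L=\dot R=0$, and on a face the derivative may vanish identically, so a naive ``first exit time'' argument needs care. To avoid invoking Nagumo I would give a self-contained argument instead.
\begin{itemize}
\item Write $\dot L = L(\alpha_L C-\mu_L)+\gamma_{RL}RC$. As long as $R,C\ge0$, the variation-of-constants formula gives $L(t)\ge L_0\exp\bigl(\int_0^t(\alpha_L C-\mu_L)\,ds\bigr)\ge0$.
\item Treat $R$ the same way.
\item For $C$, use $\dot C=-C\bigl(\alpha_L L+\alpha_R R+\gamma_{RL}R+\gamma_{LR}L\bigr)+\mu_L L+\mu_R R$, which is linear in $C$ with a nonnegative source term as long as $L,R\ge0$.
\end{itemize}
Define $T$ as the supremum of times on which all three components are nonnegative. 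Over $[0,T]$ the three representations yield nonnegativity simultaneously. To push past $T$, I would run the same argument on the perturbed system in which a small $\varepsilon>0$ is added to each source term, making the inequalities strict. Continuous dependence on parameters as $\varepsilon\to0$ then closes the argument.
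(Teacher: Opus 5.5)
Your proposal is correct, and its core (the sign of the normal component of the field on the closed faces $L=0$, $R=0$, $C=0$) is exactly the paper's proof. Where you differ is the step from these weak boundary inequalities to invariance. The paper simply asserts it from ``inward or tangent'' plus Picard--Lindel\"of uniqueness, which is a Nagumo/Bony--Brezis argument left unnamed, and it does not explicitly address global existence. You instead cite the tangency theorem, get existence for all $t\ge0$ from compactness of $\Simp$, and sketch a self-contained replacement via variation of constants and an $\varepsilon$-perturbation. The paper's version buys brevity; yours makes explicit the logical step the paper leaves implicit.

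The degeneracy is milder than you suggest. On $L=0$ one has $\dot L=\gamma_{RL}R(1-R)$, which vanishes only at the vertices $(0,0)$ and $(0,1)$, and similarly for $R=0$. On $C=0$ one has $L+R=1$, so $\dot C=\mu_LL+\mu_RR\ge\min\{\mu_L,\mu_R\}>0$ strictly.

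This matters for one detail of your self-contained argument. With $C\equiv1-L-R$ you cannot add $+\varepsilon$ to all three equations, because $\dot C=-\dot L-\dot R$. There are two easy fixes:
\begin{enumerate}[label=(\alph*)]
\item Perturb only $\dot L$ and $\dot R$. Then on $C=0$ you get $\dot C\ge\min\{\mu_L,\mu_R\}-2\varepsilon>0$ for small $\varepsilon$.
\item Lift to the system for $(L,R,C)\in\R^3$ given by your three equations. They sum to zero, so the plane $L+R+C=1$ is invariant and carries the original flow. Perturb all three there, and recover conservation in the limit $\varepsilon\to0$.
\end{enumerate}
Also, the variation-of-constants bounds on $[0,T]$ by themselves only restate the definition of $T$. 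Their real role is in the perturbed system, where the $\varepsilon$ source forces every component to be strictly positive at any candidate exit time.
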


\begin{proof}
  We verify that the vector field $\mathbf{F}=(F_1,F_2)$ points into $\Simp$ (or is
  tangent to it) at every point on the boundary $\partial\Simp$.

  \emph{Face $L=0$, $R\in[0,1]$:}
  $F_1\big|_{L=0} = \gamma_{RL}\,R\,(1-R)\ge0$.
  Hence $\dot{L}\ge0$: the trajectory cannot exit through this face.

  \emph{Face $R=0$, $L\in[0,1]$:}
  By symmetry, $F_2\big|_{R=0} = \gamma_{LR}\,L\,(1-L)\ge0$.

  \emph{Face $C=0$, i.e.\ $L+R=1$:}
  Here $C=0$, so $F_1\big|_{C=0}=-\mu_L\,L\le0$ and
  $F_2\big|_{C=0}=-\mu_R\,R\le0$. Therefore $\dot{L}+\dot{R}\le0$, which means
  $\dot{C}=-\dot{L}-\dot{R}\ge0$: the total radical share cannot increase beyond
  1, so trajectories cannot exit through this face either.

  Since the vector field points inward (or is tangent) on all three faces, and
  the system has a unique locally Lipschitz solution by the Picard--Lindel\"of
  theorem, the simplex $\Simp$ is positively invariant.   
\end{proof}

\begin{remark}
  The boundary behaviour on the face $C=0$ is particularly informative.  When
  the centrist pool is entirely depleted, both $\dot{L}$ and $\dot{R}$ are
  non-positive: the system decelerates rather than accelerating, and must
  return to the interior of $\Simp$.  This guarantees $C(t)>0$ for all finite $t$
  whenever $C(0)>0$.
\end{remark}

\subsection{Symmetric reduction and global dynamics}

\begin{definition}[Symmetric case]\label{def:sym}
  We say the model is \emph{symmetric} if
  $\alpha_L=\alpha_R=\alpha$,\; $\mu_L=\mu_R=\mu$,\;
  $\gamma_{LR}=\gamma_{RL}=\gamma$.
\end{definition}

Under symmetric parameters, introduce the \emph{total radical share}
$S:=L+R$ and the \emph{left--right imbalance} $D:=L-R$.  Since $C=1-S$, a
direct computation from \eqref{eq:model} yields the \emph{triangular} system
\begin{equation}\label{eq:SD_system}
  \dot{S} = S\bigl[(\alpha+\gamma)(1-S)-\mu\bigr],
  \qquad
  \dot{D} = D\bigl[(\alpha-\gamma)(1-S)-\mu\bigr].
\end{equation}
Setting $\beta:=\alpha+\gamma$, the $S$-equation is logistic-like with unique
positive equilibrium $S^*=1-\mu/\beta$ when $\beta>\mu$.  Once $S(t)$ is
determined, $D$ satisfies a linear ODE.  The diagonal $\{L=R\}$ is the
invariant subspace $D=0$; on it $S=2P$ and the $S$-equation reduces to
\begin{equation}\label{eq:scalar}
  \dot{P} = P[\beta(1-2P)-\mu].
\end{equation}

\begin{theorem}[Global dynamics of the symmetric model]\label{thm:global}
  \begin{enumerate}
    \item If $\beta\le\mu$: $E_0=(0,0)$ is globally asymptotically stable on
      $\Simp$.  Both radical wings vanish: $L(t),R(t)\to0$, $C(t)\to1$.
    \item If $\beta>\mu$: the unique interior equilibrium
      $E_1=(P^*,P^*)$, with $P^*=\tfrac12(1-\mu/\beta)$ and $C^*=\mu/\beta$,
      is globally asymptotically stable on $\Simp\setminus\{E_0\}$; $E_0$ is
      unstable.
  \end{enumerate}
\end{theorem}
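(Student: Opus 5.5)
The plan is to work entirely with the triangular $(S,D)$ system \eqref{eq:SD_system}, exploiting the fact that the $S$-equation decouples. By Proposition~\ref{prop:inv}, every trajectory stays in $\Simp$, so $S(t)\in[0,1]$ and, since $L,R\ge0$, we have the pointwise bound $|D(t)|\le S(t)$ for all $t\ge0$. Moreover, $(L,R)=E_0$ exactly when $S=0$. Hence $\Simp\setminus\{E_0\}$ corresponds to $S_0>0$, and the set $\{S>0\}$ is invariant because the $S$-equation has the factor $S$.

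\emph{Case $\beta\le\mu$.} Here $\dot S=S[\beta(1-S)-\mu]\le -\beta S^2\le0$, so $S$ is nonincreasing. When $\beta<\mu$, we have $\dot S\le-(\mu-\beta)S$ and $S$ decays exponentially. The borderline $\beta=\mu$ is the one non-hyperbolic point, and I expect it to be the only delicate step. There $\dot S=-\beta S^2$ is solved explicitly as $S(t)=S_0/(1+\beta S_0 t)\to0$, so convergence holds but is only algebraic. In both subcases the bound $|D|\le S$ forces $D\to0$, hence $L,R\to0$ and $C\to1$. Lyapunov stability of $E_0$ follows by taking $V=S=L+R$ as a Lyapunov function: it is positive definite on $\Simp$, nonincreasing along orbits, and controls $|D|$ (hence $L$ and $R$), so small initial $S$ keeps the orbit close to $E_0$. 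Together with attraction, this gives global asymptotic stability on $\Simp$.

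\emph{Case $\beta>\mu$.} The scalar equation for $S$ is logistic with carrying capacity $S^*=1-\mu/\beta\in(0,1)$, and the sign of $\dot S$ is that of $S^*-S$. Hence, for every $S_0\in(0,1]$, $S(t)\to S^*$ monotonically. Next I integrate the linear $D$-equation:
\[
D(t)=D_0\exp\Bigl(\int_0^t\bigl[(\alpha-\gamma)(1-S(s))-\mu\bigr]\,ds\Bigr).
\]
The integrand converges to
\[
(\alpha-\gamma)\frac{\mu}{\beta}-\mu=\frac{\mu(\alpha-\gamma-\beta)}{\beta}=-\frac{2\gamma\mu}{\beta}<0.
\]
Therefore it is eventually bounded above by $-\gamma\mu/\beta$, and $D(t)\to0$ exponentially. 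Consequently $(S,D)\to(S^*,0)$, i.e.\ $(L,R)\to E_1$ with $P^*=S^*/2$ and $C^*=\mu/\beta$. Uniqueness of the interior equilibrium follows from the same system: an interior equilibrium needs $S=S^*$, and then $D[(\alpha-\gamma)\mu/\beta-\mu]=0$ forces $D=0$.

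For Lyapunov stability of $E_1$, I linearise in the $(S,D)$ coordinates. The Jacobian at $(S^*,0)$ is lower triangular with eigenvalues $-(\beta-\mu)$ and $-2\gamma\mu/\beta$, both negative. Since $(S,D)$ is a linear change of coordinates, $E_1$ is a hyperbolic sink. Local asymptotic stability combined with the attraction just shown gives global asymptotic stability on $\Simp\setminus\{E_0\}$. Finally, at $E_0$ the Jacobian in $(S,D)$ is diagonal with entries $\beta-\mu>0$ and $\alpha-\gamma-\mu$. The positive eigenvalue shows that $E_0$ is unstable; directly, any $S_0>0$, however small, grows toward $S^*$.
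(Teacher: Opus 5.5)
Your proposal is correct and follows essentially the same route as the paper: the triangular $(S,D)$ reduction, monotone logistic convergence of $S$, the integrating-factor solution of the linear $D$-equation with limiting rate $-2\gamma\mu/\beta$, and the bound $|D|\le S$ in the subcritical case. Your explicit solution $S(t)=S_0/(1+\beta S_0 t)$ at $\beta=\mu$ and your Lyapunov-stability checks ($V=S$ at $E_0$, the hyperbolic sink at $E_1$) tidy up two points the paper passes over quickly: its inequality $\dot S\le S(\beta-\mu)\le 0$ gives only monotonicity at $\beta=\mu$, and it verifies attraction but not stability.
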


\begin{proof}
  \textbf{Case $\beta\le\mu$:}
  $\dot{S}=S[\beta(1-S)-\mu]\le S(\beta-\mu)\le0$, so $S(t)\to0$.
  Since $|D|\le S$, also $D(t)\to0$, hence $L,R\to0$.

  \textbf{Case $\beta>\mu$:}
  The $S$-equation is logistic-like; for any $S(0)\in(0,1]$,
  $S(t)\to S^*=1-\mu/\beta$ monotonically.  The $D$-equation is linear:
  \[
    \dot D = k(t)\,D,
    \qquad
    k(t):=(\alpha-\gamma)(1-S(t))-\mu.
  \]
  Hence
  \[
    D(t)=D(0)\exp\!\left(\int_0^t k(s)\,ds\right).
  \]
  Since $S(t)\to S^*=1-\mu/\beta$, we have
  \[
    k(t)\to (\alpha-\gamma)\frac{\mu}{\beta}-\mu
         = -\frac{2\gamma\mu}{\beta}<0.
  \]
  Therefore there exist $T>0$ and $\eta>0$ such that
  $k(t)\le -\eta$ for all $t\ge T$.  For $t\ge T$,
  \[
    |D(t)|
    \le |D(T)|\exp\!\left(-\eta(t-T)\right),
  \]
  so $D(t)\to0$ exponentially.  Consequently
  \[
    L(t)=\frac{S(t)+D(t)}{2}\to \frac{S^*}{2}=P^*,
    \qquad
    R(t)=\frac{S(t)-D(t)}{2}\to \frac{S^*}{2}=P^*.
  \]
  Instability of $E_0$: its dominant eigenvalue is
  $\lambda_1=\beta-\mu>0$.   
\end{proof}

\begin{corollary}[Centrist share at equilibrium]\label{cor:Cstar}
  When $E_1$ exists ($\beta>\mu$), the centrist share at equilibrium is
  \begin{equation}\label{eq:Cstar}
    C^* = \frac{\mu}{\beta} = \frac{\mu}{\alpha+\gamma}\in(0,1).
  \end{equation}
  $C^*$ is strictly decreasing in $\alpha$ and $\gamma$, strictly increasing
  in $\mu$, and positive for all positive parameters.
\end{corollary}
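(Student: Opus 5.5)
The plan is to read the corollary off Theorem~\ref{thm:global}(2) and then check monotonicity by elementary calculus on the closed form. First, $E_1=(P^*,P^*)$ with $P^*=\tfrac12(1-\mu/\beta)$, so at equilibrium the total radical share is $S^*=2P^*=1-\mu/\beta$. Conservation $C=1-L-R=1-S$ then gives $C^*=1-S^*=\mu/\beta=\mu/(\alpha+\gamma)$. As a cross-check, I would verify this directly from \eqref{eq:SD_system}: a nonzero equilibrium of the $S$-equation requires $(\alpha+\gamma)(1-S)=\mu$, which is the same as $\beta C^*=\mu$.

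Second, I would establish the range. Since $\mu>0$ and $\beta=\alpha+\gamma>0$, we get $C^*>0$. The existence hypothesis $\beta>\mu$ gives $\mu/\beta<1$, so $C^*\in(0,1)$. The interval is open at both ends, so the equilibrium lies strictly in the interior of $\Simp$, consistent with Theorem~\ref{thm:global}.

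Third, I would compute the partial derivatives:
\[
\frac{\partial C^*}{\partial\alpha}=\frac{\partial C^*}{\partial\gamma}=-\frac{\mu}{(\alpha+\gamma)^2}<0,\qquad \frac{\partial C^*}{\partial\mu}=\frac{1}{\alpha+\gamma}>0.
\]
These give strict monotonicity on the parameter region where $E_1$ exists. One caveat: increasing $\mu$ beyond $\beta$ destroys $E_1$, so the monotonicity in $\mu$ should be stated on the interval $0<\mu<\beta$.

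I do not expect a real obstacle. The only care needed is this domain remark, together with noting that ``positive for all positive parameters'' refers to the formula $\mu/\beta$ itself, which stays positive even when $E_1$ does not exist.
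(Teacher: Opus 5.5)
Your proposal is correct and follows the paper's proof, which is just direct substitution of $P^*=\tfrac12(1-\mu/\beta)$ into $C=1-2P^*$, with the monotonicity and positivity claims left to the elementary derivatives you write out. Your caveat that monotonicity in $\mu$ should be read on $0<\mu<\beta$ makes explicit a domain restriction the paper leaves unstated.
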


\begin{proof}
  Direct substitution of $P^*$ into $C=1-2P^*$.   
\end{proof}

\subsection{Stability analysis: two-dimensional case}

We now return to the full asymmetric system~\eqref{eq:model}.

\begin{definition}[Interior equilibrium]\label{def:E1_2d}
  A point $(L^*,R^*)$ with $L^*>0$, $R^*>0$, $L^*+R^*<1$ is an
  \emph{interior equilibrium} if $F_1(L^*,R^*)=0$ and $F_2(L^*,R^*)=0$.
\end{definition}

Setting $F_1=F_2=0$ with $C^*=1-L^*-R^*$ and collecting terms:
\begin{equation}\label{eq:eq_system}
  C^*\!\bigl(\alpha_L\,L^* + \gamma_{RL}\,R^*\bigr) = \mu_L\,L^*, \qquad
  C^*\!\bigl(\alpha_R\,R^* + \gamma_{LR}\,L^*\bigr) = \mu_R\,R^*.
\end{equation}
Together with $L^*+R^*+C^*=1$, this is a closed $3\times3$ system for
$(L^*,R^*,C^*)$.  In the symmetric case $L^*=R^*=P^*$ both equations reduce
to $C^*(\alpha+\gamma)=\mu$, i.e.\ $C^*=\mu/\beta$, consistent with
Corollary~\ref{cor:Cstar}.

The Jacobian at a general point $(L,R)\in\mathrm{int}(\Simp)$:
\begin{equation}\label{eq:jacobian}
  J = \begin{pmatrix}
    \alpha_L(C-L) - \mu_L - \gamma_{RL}\,R &
    \gamma_{RL}(C-R) - \alpha_L\,L \\[4pt]
    \gamma_{LR}(C-L) - \alpha_R\,R &
    \alpha_R(C-R) - \mu_R - \gamma_{LR}\,L
  \end{pmatrix}.
\end{equation}
Note that $J_{11}\ne (\alpha_L+\gamma_{RL})(C-L)-\mu_L$: the $\gamma_{RL}$-term
differentiates differently through $L$ and $R$ because in $F_1$ it multiplies $R\,C$,
not $L\,C$.

\begin{theorem}[Existence, uniqueness, and stability of the interior
  equilibrium]\label{thm:PF}
Define the \emph{recruitment matrix} and \emph{decay matrix}:
  \[
    K:=\begin{pmatrix}\alpha_L & \gamma_{RL}\\\gamma_{LR} & \alpha_R\end{pmatrix},
    \quad
    M:=\begin{pmatrix}\mu_L & 0\\ 0 & \mu_R\end{pmatrix}.
  \]
Let $\lambda_{\mathrm{PF}}>0$ be the Perron root of $M^{-1}K$ and $u=(u_1,u_2)^\top\gg0$ the
corresponding positive eigenvector, with explicit formula
  \[
    \lambda_{\mathrm{PF}}=
    \frac{1}{2}\!\left(\frac{\alpha_L}{\mu_L}+\frac{\alpha_R}{\mu_R}
    +\sqrt{\!\left(\frac{\alpha_L}{\mu_L}-\frac{\alpha_R}{\mu_R}\right)^{\!2}
    +\frac{4\gamma_{RL}\gamma_{LR}}{\mu_L\mu_R}}\,\right).
  \]
  \begin{enumerate}
    \item An interior equilibrium exists if and only if $\lambda_{\mathrm{PF}}>1$.
    \item When it exists it is unique:
      \[
        C^*=\frac{1}{\lambda_{\mathrm{PF}}},\qquad
        (L^*,R^*)=\frac{1-C^*}{u_1+u_2}\,(u_1,u_2).
      \]
    \item It is locally asymptotically stable.  In the symmetric case it is a
      stable node (or stable star on a codimension-one locus,
      Theorem~\ref{thm:2d}).  In the asymmetric case it may be a stable node
      or a stable \emph{focus}.
  \end{enumerate}
\end{theorem}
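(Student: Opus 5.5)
The plan is to rewrite the equilibrium system \eqref{eq:eq_system} as a generalised eigenvalue problem. Put $x^*=(L^*,R^*)^\top$. Then \eqref{eq:eq_system} reads $C^*Kx^*=Mx^*$, which is equivalent to
\[
  M^{-1}Kx^*=\frac{1}{C^*}\,x^*.
\]
So an interior equilibrium is exactly a strictly positive eigenvector of $M^{-1}K$ with eigenvalue $1/C^*>1$. The inequality $1/C^*>1$ holds because $C^*=1-L^*-R^*<1$, and the eigenvalue is finite because $C^*>0$ is forced: if $C^*=0$, the left side of \eqref{eq:eq_system} would vanish while $\mu_L L^*>0$.

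The matrix $M^{-1}K$ has all entries strictly positive. By Perron--Frobenius it therefore has a simple dominant eigenvalue $\lambda_{\mathrm{PF}}$ with eigenvector $u\gg0$, and no other eigenvalue has a nonnegative eigenvector. The other eigenvalue is real and smaller, and its eigenvector has entries of opposite signs. This disposes of parts 1 and 2 together.
\begin{itemize}
\item \emph{Necessity.} A positive $x^*$ must be a multiple of $u$ with $1/C^*=\lambda_{\mathrm{PF}}$. Hence $\lambda_{\mathrm{PF}}>1$ is necessary.
\item \emph{Uniqueness.} The normalisation $L^*+R^*=1-C^*$ fixes the scalar multiple, which yields the stated formula $(L^*,R^*)=\frac{1-C^*}{u_1+u_2}(u_1,u_2)$.
\item \emph{Sufficiency.} If $\lambda_{\mathrm{PF}}>1$, set $C^*=1/\lambda_{\mathrm{PF}}\in(0,1)$ and define $(L^*,R^*)$ by the same formula. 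The result is an interior point satisfying \eqref{eq:eq_system}.
\end{itemize}
The explicit formula for $\lambda_{\mathrm{PF}}$ is the larger root of the $2\times2$ characteristic polynomial of $M^{-1}K$.

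For part 3 I would evaluate the Jacobian \eqref{eq:jacobian} at the equilibrium and simplify it using \eqref{eq:eq_system}. The equilibrium relations give $\alpha_L C^*-\mu_L=-\gamma_{RL}C^*R^*/L^*$ and $\alpha_R C^*-\mu_R=-\gamma_{LR}C^*L^*/R^*$. Substituting these,
\[
  J^*=\begin{pmatrix}
    -\gamma_{RL}C^*\frac{R^*}{L^*}-\alpha_L L^*-\gamma_{RL}R^* &
    \gamma_{RL}C^*-\alpha_L L^*-\gamma_{RL}R^* \\[4pt]
    \gamma_{LR}C^*-\alpha_R R^*-\gamma_{LR}L^* &
    -\gamma_{LR}C^*\frac{L^*}{R^*}-\alpha_R R^*-\gamma_{LR}L^*
  \end{pmatrix}.
\]
Write $a:=\alpha_L L^*+\gamma_{RL}R^*$ and $b:=\alpha_R R^*+\gamma_{LR}L^*$, both positive. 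The diagonal entries show at once that $\operatorname{tr}J^*<0$.

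Next I would decompose $J^*=A-w\mathbf 1^\top$ with $w=(a,b)^\top$ and
\[
  A=C^*\begin{pmatrix}-\gamma_{RL}R^*/L^* & \gamma_{RL}\\ \gamma_{LR} & -\gamma_{LR}L^*/R^*\end{pmatrix}.
\]
The matrix $A$ is singular, since $\det A=C^{*2}(\gamma_{RL}\gamma_{LR}-\gamma_{RL}\gamma_{LR})=0$. By the matrix determinant lemma,
\[
  \det J^*=-\mathbf 1^\top\operatorname{adj}(A)\,w,
\]
and $\operatorname{adj}A=C^*\begin{pmatrix}-\gamma_{LR}L^*/R^* & -\gamma_{RL}\\ -\gamma_{LR} & -\gamma_{RL}R^*/L^*\end{pmatrix}$ has all entries negative. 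Hence $\det J^*>0$. Together with $\operatorname{tr}J^*<0$, both eigenvalues have negative real part, which gives local asymptotic stability.

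The node/focus dichotomy follows from the sign of the discriminant $(\operatorname{tr}J^*)^2-4\det J^*$. In the symmetric case I would defer to Theorem~\ref{thm:2d}, or read it directly from the triangular $(S,D)$ system. There the eigenvalues are $-(\beta-\mu)$ and $-2\gamma\mu/\beta$, both real, and they coincide on a codimension-one locus.

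The main obstacle is the determinant sign, which must hold uniformly in the parameters; the sign-indefinite off-diagonal entries are exactly where a direct expansion gets messy. If the rank-one decomposition resists, I would fall back on brute force: expand $\det J^*$ and substitute the equilibrium identities to show that every surviving term is positive. The second thing to check carefully is the Perron--Frobenius uniqueness claim, namely that a positive solution must correspond to the Perron root and not the other eigenvalue. For $2\times2$ positive matrices this is elementary, because the second eigenvector has entries of opposite signs.
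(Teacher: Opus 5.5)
Your proof is correct. Parts 1--2 follow the paper's argument essentially verbatim: you rewrite \eqref{eq:eq_system} as $M^{-1}Kx^*=x^*/C^*$, use Perron--Frobenius to force $1/C^*=\lambda_{\mathrm{PF}}$, and fix the scale by $L^*+R^*=1-C^*$. Your trace computation in part 3 is also the paper's.

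The genuine difference is the determinant. The paper substitutes the equilibrium relations and asserts that ``a direct computation'' gives
$\det J=C^*(L^*+R^*)\bigl(\alpha_L\gamma_{LR}L^*/R^*+2\gamma_{LR}\gamma_{RL}+\alpha_R\gamma_{RL}R^*/L^*\bigr)$.
You instead split $J^*=A-w\mathbf 1^\top$. In intrinsic terms $A=C^*K-M$ and $w=Kx^*$, since this is just the Jacobian of $x\mapsto(1-\mathbf 1^\top x)Kx-Mx$. The matrix $A$ is singular precisely because $1/C^*$ is an eigenvalue of $M^{-1}K$.

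The adjugate form of the matrix determinant lemma is valid for singular $A$. Together with $\det A=0$, it reduces the sign question to a simple fact: a $2\times2$ matrix with negative diagonal and positive off-diagonal entries has an entrywise negative adjugate. This gives a structural reason for $\det J^*>0$. It reuses the eigen-relation from parts 1--2 and avoids the sign-indefinite off-diagonal entries.

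It also costs nothing. Expanding your expression gives
$-\mathbf 1^\top\operatorname{adj}(A)w=C^*(L^*+R^*)\bigl(\gamma_{LR}a/R^*+\gamma_{RL}b/L^*\bigr)$, which is exactly the paper's closed form. So the discriminant analysis is equally available to you.

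One caveat applies to both arguments: neither actually exhibits a focus, so the ``may be a stable focus'' clause is only asserted. A single example settles it. Take $(L^*,R^*,C^*)=(0.6,0.1,0.3)$, $\gamma_{RL}=16$, $\gamma_{LR}=1$, $\alpha_L,\alpha_R\to0^+$, and define $\mu_L,\mu_R$ from the equilibrium relations. Then $J_{11}=J_{22}=-2.4$ and $J_{12}J_{21}=-0.96$, so the discriminant is $-3.84<0$. By continuity it stays negative for small positive $\alpha_i$.
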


\begin{proof}
  \textbf{Parts 1--2.}
 At an interior equilibrium, the equations \eqref{eq:eq_system} read
  $C^* Kv = Mv$ for $v=(L^*,R^*)^\top\gg0$.  Multiplying by $M^{-1}$:
  $(M^{-1}K)v=(1/C^*)v$.  By the Perron--Frobenius theorem, the only positive
  eigenvector of the positive matrix $M^{-1}K$ is the Perron eigenvector with
  eigenvalue $\lambda_{\mathrm{PF}}$, so $C^*=1/\lambda_{\mathrm{PF}}\in(0,1)$
  iff $\lambda_{\mathrm{PF}}>1$.  The explicit formula follows from the $2\times2$
  characteristic polynomial of $M^{-1}K$.

  In the symmetric case $\lambda_{\mathrm{PF}}=\beta/\mu$, so the condition
  $\lambda_{\mathrm{PF}}>1$ reduces exactly to $\beta>\mu$, and
  $C^*=\mu/\beta$, consistent with Corollary~\ref{cor:Cstar}.

  \textbf{Part 3 (local stability).}
  At an interior equilibrium, the relations
  \[
    \mu_L = C^*\!\left(\alpha_L+\gamma_{RL}\frac{R^*}{L^*}\right),
    \qquad
    \mu_R = C^*\!\left(\alpha_R+\gamma_{LR}\frac{L^*}{R^*}\right)
  \]
  follow from \eqref{eq:eq_system}.  Substituting them into the Jacobian
  \eqref{eq:jacobian} gives
  \[
    J_{11}
    = -\alpha_L L^* - \gamma_{RL}R^*
      - C^*\gamma_{RL}\frac{R^*}{L^*},
    \qquad
    J_{22}
    = -\alpha_R R^* - \gamma_{LR}L^*
      - C^*\gamma_{LR}\frac{L^*}{R^*}.
  \]
  Therefore
  \[
    \mathrm{tr}\,J
    = -\left[
      \alpha_L L^* + \alpha_R R^*
      + \gamma_{RL}R^* + \gamma_{LR}L^*
      + C^*\gamma_{RL}\frac{R^*}{L^*}
      + C^*\gamma_{LR}\frac{L^*}{R^*}
    \right] < 0.
  \]
For the determinant, expand $\det J = J_{11}J_{22}-J_{12}J_{21}$.
  Using the expressions for $J_{11}$, $J_{22}$ above, and
  \[
    J_{12} = \gamma_{RL}(C^*-R^*)-\alpha_L L^*,
    \qquad
    J_{21} = \gamma_{LR}(C^*-L^*)-\alpha_R R^*,
  \]
  together with the equilibrium relations $\mu_L = C^*(\alpha_L +
  \gamma_{RL}R^*/L^*)$ and $\mu_R = C^*(\alpha_R + \gamma_{LR}L^*/R^*)$,
  a direct computation gives
  \[
    \det J
    = C^*(L^*+R^*)
      \left(
        \alpha_L\gamma_{LR}\frac{L^*}{R^*}
        + 2\gamma_{LR}\gamma_{RL}
        + \alpha_R\gamma_{RL}\frac{R^*}{L^*}
      \right) > 0.
  \]
  Hence $\mathrm{tr}\,J < 0$ and $\det J > 0$, so the interior equilibrium
  is locally asymptotically stable. The
  node-vs-focus classification depends on the sign of the discriminant
  $(\mathrm{tr}\,J)^2-4\det J$.   
\end{proof}

\begin{theorem}[Stability in the symmetric 2D case]\label{thm:2d}
  In the symmetric model ($\beta>\mu$), the interior equilibrium $E_1=(P^*,P^*)$
  is locally asymptotically stable with eigenvalues
  \begin{equation}\label{eq:eigenvalues}
    \lambda_1 = \mu - \beta < 0, \qquad
    \lambda_2 = -\frac{2\gamma\mu}{\beta} < 0.
  \end{equation}
  Generically ($\lambda_1\ne\lambda_2$) it is a \emph{stable node}.  The two
  eigenvalues coincide on the codimension-one locus
  $2\gamma\mu=\beta(\beta-\mu)$, at which $E_1$ is a \emph{stable star}.
\end{theorem}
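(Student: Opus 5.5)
The plan is to work in the coordinates $(S,D)=(L+R,\,L-R)$ of the triangular system \eqref{eq:SD_system} rather than with the full Jacobian \eqref{eq:jacobian}. The map $(L,R)\mapsto(S,D)$ is linear and invertible, so it conjugates the linearisations and preserves eigenvalues. It also preserves diagonalisability, which is what decides node versus star. Under this map $E_1=(P^*,P^*)$ corresponds to $(S^*,0)$ with $S^*=1-\mu/\beta$, and $1-S^*=\mu/\beta$.

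First I will compute the Jacobian of \eqref{eq:SD_system} at $(S^*,0)$.
\begin{itemize}
\item Since $\dot S$ does not depend on $D$, the entry $\partial\dot S/\partial D$ is $0$.
\item The entry $\partial\dot D/\partial S=-(\alpha-\gamma)D$ vanishes at $D=0$.
\item The $S$-diagonal entry is $\beta(1-S^*)-\mu-\beta S^*=-\beta S^*=\mu-\beta$, using $\beta(1-S^*)=\mu$.
\item The $D$-diagonal entry is $(\alpha-\gamma)\mu/\beta-\mu=-2\gamma\mu/\beta$, since $\alpha-\gamma-\beta=-2\gamma$. This is the same limit of $k(t)$ already computed in the proof of Theorem~\ref{thm:global}.
\end{itemize}
So the linearisation is exactly $\mathrm{diag}(\mu-\beta,\,-2\gamma\mu/\beta)$. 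This gives \eqref{eq:eigenvalues}. Both eigenvalues are negative because $\beta>\mu$ and $\gamma,\mu,\beta>0$, so local asymptotic stability follows from the linearisation theorem. It is also consistent with Theorem~\ref{thm:PF}.

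For the classification, the two eigenvalues are real, negative and distinct whenever $\beta-\mu\neq 2\gamma\mu/\beta$, and then $E_1$ is a stable node. Multiplying through by $\beta>0$, equality holds exactly when $\beta(\beta-\mu)=2\gamma\mu$. On that set the linearisation in $(S,D)$ coordinates is a scalar multiple of the identity, because it is already diagonal. Its conjugate in $(L,R)$ coordinates is then also that scalar times the identity, so all directions are eigendirections and the equilibrium is a stable star rather than a degenerate node. I expect this to be the only point needing care: one must check that the off-diagonal terms vanish, and the triangular structure with $D^*=0$ provides this.

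Finally, I will show that the locus is codimension one. Set $g(\alpha,\gamma,\mu)=(\alpha+\gamma)(\alpha+\gamma-\mu)-2\gamma\mu$. Then $\partial g/\partial\alpha=2\beta-\mu>0$ on $\beta>\mu$, so $g^{-1}(0)$ is a smooth hypersurface in the open parameter region. It is nonempty: for example, given $\gamma,\mu$, solve the quadratic $\beta^2-\mu\beta-2\gamma\mu=0$ for its root $\beta>\mu$ and set $\alpha=\beta-\gamma$. Positivity of $\alpha$ requires $\beta>\gamma$, and if that fails one can adjust $\mu$ instead. This confirms that the star case is non-generic.
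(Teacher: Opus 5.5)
Your proposal is correct. It differs from the paper's proof mainly in where the diagonalisation happens.

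The paper evaluates the $(L,R)$ Jacobian \eqref{eq:jacobian} at $E_1$ and obtains the symmetric circulant form $\begin{psmallmatrix}a&b\\b&a\end{psmallmatrix}$, with $a=[\mu(\alpha-\gamma)-\beta^2]/(2\beta)$ and $b=[2\gamma\mu-\beta(\beta-\mu)]/(2\beta)$. It then reads off $\lambda_{1,2}=a\pm b$. The eigenvectors of that matrix are $(1,1)^\top$ and $(1,-1)^\top$, which are exactly your $S$- and $D$-directions.

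You instead change to $(S,D)$ before linearising. The triangular structure of \eqref{eq:SD_system}, together with $D^*=0$, then gives a diagonal linearisation with no algebra for $a$ and $b$. Its $D$-entry is just the limit of $k(t)$ already computed in Theorem~\ref{thm:global}.

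Your route also makes explicit two claims the paper's proof leaves implicit. The star case follows because a diagonal matrix with equal entries is a scalar matrix, and similarity preserves this. The codimension-one property follows from $\partial g/\partial\alpha=2\beta-\mu>0$.

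The paper's route, in exchange, gives the Jacobian entries in the original coordinates. There the star locus appears directly as $b=0$, i.e.\ $J=aI$, and the computation ties in with the general asymmetric Jacobian used in Theorem~\ref{thm:PF}.

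One small tightening of your nonemptiness argument: rather than ``adjust $\mu$'', simply take $\mu\ge\gamma$. Then the root satisfies $\beta>\mu\ge\gamma$, so $\alpha=\beta-\gamma>0$ automatically.
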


\begin{proof}
  Evaluate \eqref{eq:jacobian} at $L=R=P^*$ with $C^*=\mu/\beta$,
  using $P^*=(\beta-\mu)/(2\beta)$.  The diagonal and off-diagonal entries are
  \[
    a \;:=\; J_{11}=J_{22} \;=\; \frac{\mu(\alpha-\gamma)-\beta^2}{2\beta},
    \qquad
    b \;:=\; J_{12}=J_{21} \;=\; \frac{2\gamma\mu - \beta(\beta-\mu)}{2\beta}.
  \]
  (The equality $J_{12}=J_{21}$ follows from the symmetry $\alpha_L=\alpha_R$,
  $\gamma_{RL}=\gamma_{LR}$.)  The matrix $\begin{psmallmatrix}a&b\\b&a\end{psmallmatrix}$
  has eigenvalues $\lambda_{1,2}=a\pm b$.  Computing the sum and difference:
  \begin{align*}
    \lambda_1 &= a + b = \frac{\mu(\alpha-\gamma)-\beta^2 + 2\gamma\mu - \beta(\beta-\mu)}{2\beta}
              = \frac{2\mu\beta - 2\beta^2}{2\beta} = \mu - \beta,\\[4pt]
    \lambda_2 &= a - b = \frac{\mu(\alpha-\gamma)-\beta^2 - 2\gamma\mu + \beta(\beta-\mu)}{2\beta}
              = \frac{\mu(\alpha-3\gamma-\beta)}{2\beta}
              = \frac{-4\gamma\mu}{2\beta}
              = -\frac{2\gamma\mu}{\beta}.
  \end{align*}
  Since $\beta>\mu$, both values are strictly negative.  Because $\lambda_1\ne\lambda_2$
  generically, $E_1$ is a stable node.
  $\mathrm{tr}\,J = \lambda_1+\lambda_2 = \mu-\beta-2\gamma\mu/\beta < 0$ and
  $\det J = \lambda_1\lambda_2 = -(\beta-\mu)\cdot 2\gamma\mu/\beta > 0$
  for all $\beta>\mu$, $\gamma,\mu>0$.   
\end{proof}

\begin{remark}[Interpretation of the two eigenvalues]
  The eigenvalue $\lambda_1=\mu-\beta$ is precisely the stability exponent of the
  scalar reduced system (Theorem~\ref{thm:global}): it governs the symmetric mode
  in which $L$ and $R$ change together.  The eigenvalue $\lambda_2=-2\gamma\mu/\beta$
  governs the \emph{asymmetric} mode (the difference $L-R$) and is determined by
  the product of the reactive polarisation rate $\gamma$ and the deradicalisation
  rate $\mu$.  In the limit $\gamma\to0$ (no reactive polarisation), $\lambda_2\to0$:
  the asymmetric mode becomes neutrally stable, reflecting the fact that without
  reactive cross-coupling, the two wings evolve independently.
\end{remark}

\begin{corollary}[No periodic orbits: full asymmetric model]\label{cor:dulac}
  The system \eqref{eq:model} has no closed orbits in the interior of $\Simp$
  for any positive parameter values, without symmetry restrictions.
\end{corollary}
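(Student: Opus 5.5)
We apply the Bendixson--Dulac criterion on the open triangle $\Omega:=\mathrm{int}(\Simp)=\{L>0,\ R>0,\ L+R<1\}$, with the Dulac function $B(L,R)=(LRC)^{-1}$, $C=1-L-R$, announced in the introduction. $\Omega$ is convex, hence simply connected. $B$ is $C^1$ and strictly positive on $\Omega$, and the vector field $\mathbf F=(F_1,F_2)$ of \eqref{eq:model} is polynomial. The criterion therefore applies once we show that $\partial_L(BF_1)+\partial_R(BF_2)$ has a fixed strict sign on $\Omega$.

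The computation is short because $B$ cancels the bilinear factors term by term. Dividing each term of $F_1$ by $LRC$ gives
\[
BF_1=\frac{\alpha_L}{R}-\frac{\mu_L}{RC}+\frac{\gamma_{RL}}{L},
\qquad
BF_2=\frac{\alpha_R}{L}-\frac{\mu_R}{LC}+\frac{\gamma_{LR}}{R}.
\]
When differentiating, we note that $C$ depends on both variables, with $\partial_L C=\partial_R C=-1$, so $\partial_L(1/C)=\partial_R(1/C)=1/C^2$. Hence
\[
\partial_L(BF_1)+\partial_R(BF_2)
=-\frac{\mu_L}{RC^2}-\frac{\gamma_{RL}}{L^2}-\frac{\mu_R}{LC^2}-\frac{\gamma_{LR}}{R^2}.
\]
Every term is strictly negative on $\Omega$ for all positive parameters. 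The recruitment rates $\alpha_L,\alpha_R$ drop out entirely, since $\alpha_L/R$ is independent of $L$ and $\alpha_R/L$ is independent of $R$.

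To conclude, suppose $\Gamma\subset\Omega$ is a closed orbit. By the Jordan curve theorem and convexity of $\Omega$, the bounded region $D$ enclosed by $\Gamma$ lies in $\Omega$, where $B$ is smooth. Green's theorem gives
\[
\iint_D \nabla\!\cdot(B\mathbf F)\,dA=\oint_\Gamma B\,(F_1\,dR-F_2\,dL)=0,
\]
because $\mathbf F$ is tangent to $\Gamma$. The left-hand side, however, is strictly negative, which is a contradiction. The orbit may lie anywhere in $\Omega$, including near the faces where $B$ blows up; this causes no difficulty, because $\Gamma$ and $D$ are compact subsets of $\Omega$.

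The only real obstacle is choosing $B$. The divergence of $\mathbf F$ itself has no definite sign, consistent with the sign-changing Jacobian \eqref{eq:jacobian}. One first tries $1/(LR)$, but the $C$-dependence then leaves $\alpha$-terms of indefinite sign. Adding the factor $1/C$ is what removes every $\alpha$ contribution and leaves only negative terms. The step that needs care in the verification is the sign of $\partial_L(1/C)$: it must come out $+1/C^2$, so that combined with the minus sign in front of $\mu_L/(RC)$ it yields a negative contribution.
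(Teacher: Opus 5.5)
Your proof is correct and is the paper's argument: the same Dulac weight $B=(LRC)^{-1}$, the same expressions for $BF_1$ and $BF_2$, and the same four strictly negative divergence terms, with your Green's-theorem paragraph simply unpacking Dulac's theorem. One side remark is wrong, though nothing in the proof depends on it: $B=1/(LR)$ also works, since it gives $\partial_L(BF_1)+\partial_R(BF_2)=-\alpha_L/R-\alpha_R/L-\gamma_{RL}(1-R)/L^2-\gamma_{LR}(1-L)/R^2<0$, so the $\alpha$-terms there are strictly negative rather than of indefinite sign.
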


\begin{proof}
  Apply the Bendixson--Dulac criterion with the weight
  $B(L,R)=1/(LRC)>0$ on $\mathrm{int}(\Simp)$, where $C=1-L-R$.
Writing $BF_1 = \alpha_L/R - \mu_L/(RC) + \gamma_{RL}/L$
  and $BF_2 = \alpha_R/L - \mu_R/(LC) + \gamma_{LR}/R$,
  a direct computation gives
  \begin{equation}\label{eq:dulac}
    \frac{\partial(B F_1)}{\partial L}+\frac{\partial(B F_2)}{\partial R}
    = -\frac{\gamma_{RL}}{L^2} - \frac{\gamma_{LR}}{R^2}
      - \frac{\mu_L}{R\,C^2} - \frac{\mu_R}{L\,C^2}.
  \end{equation}
  In $\mathrm{int}(\Simp)$ we have $L,R,C>0$, so all four terms are strictly
  negative, and the sum is strictly negative throughout $\mathrm{int}(\Simp)$.
By Dulac's theorem there are no closed orbits.
  (This result is independent of $\alpha_L,\alpha_R$, and holds for
  all parameter values, including the asymmetric case.  In particular,
  the scalar $S$-equation already precludes periodic orbits in the
  symmetric reduction.)   
\end{proof}

\begin{remark}[$E_0$ stability in the full 2D model]
  Evaluating \eqref{eq:jacobian} at $E_0=(0,0)$ (where $C=1$) gives
  \begin{equation}\label{eq:J0}
    J\big|_{E_0} = \begin{pmatrix}
      \alpha_L - \mu_L & \gamma_{RL} \\[3pt]
      \gamma_{LR}       & \alpha_R - \mu_R
    \end{pmatrix}.
  \end{equation}
  The centrist state $E_0$ is locally asymptotically stable if and only if
  \begin{equation}\label{eq:E0_stab}
    \alpha_L + \alpha_R < \mu_L + \mu_R
    \quad\text{and}\quad
    (\alpha_L-\mu_L)(\alpha_R-\mu_R) > \gamma_{RL}\,\gamma_{LR}.
  \end{equation}
  In the symmetric case, the matrix \eqref{eq:J0} becomes
  $\bigl(\begin{smallmatrix}\alpha-\mu & \gamma\\\gamma & \alpha-\mu\end{smallmatrix}\bigr)$
  with eigenvalues $\lambda_1=\beta-\mu$ and $\lambda_2=\alpha-(\mu+\gamma)$.
  Both are negative if and only if $\beta=\alpha+\gamma<\mu$.
\end{remark}

\begin{theorem}[Global dynamics of the full asymmetric baseline model]
  \label{thm:global_asym}
  Let $\lambda_{\mathrm{PF}}$ be the Perron root from Theorem~\ref{thm:PF}.
  Then:
  \begin{enumerate}
    \item If $\lambda_{\mathrm{PF}}\le 1$, the centrist equilibrium
      $E_0=(0,0)$ is globally asymptotically stable on $\Simp$.
    \item If $\lambda_{\mathrm{PF}}>1$, the unique interior equilibrium
      $E_1$ from Theorem~\ref{thm:PF} is globally asymptotically stable on
      $\Simp\setminus\{E_0\}$.
  \end{enumerate}
\end{theorem}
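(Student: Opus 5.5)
We work in the two-dimensional simplex and combine a linear Lyapunov function built from the left Perron vector with the Dulac exclusion of Corollary~\ref{cor:dulac} and Poincar\'e--Bendixson.

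Let $w=(w_1,w_2)^\top\gg0$ be a left Perron vector of $M^{-1}K$, so $w^\top M^{-1}K=\lambda_{\mathrm{PF}}w^\top$. Put $x=(L,R)^\top$, so that \eqref{eq:model} reads $\dot x = C\,Kx - Mx$. Take
\[
V(L,R)=w^\top M^{-1}x = \frac{w_1}{\mu_L}L+\frac{w_2}{\mu_R}R .
\]
Then
\[
\dot V = C\,w^\top M^{-1}Kx - w^\top x = (C\lambda_{\mathrm{PF}}-1)\,w^\top x .
\]

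\textbf{Case $\lambda_{\mathrm{PF}}\le1$.} Since $C\le1$, we get $\dot V\le(\lambda_{\mathrm{PF}}-1)\,w^\top x\le0$ on $\Simp$. Equality requires $x=0$, or $C=1$ (which again forces $x=0$), or $\lambda_{\mathrm{PF}}=1$ with $C=1$. So $\{\dot V=0\}=\{E_0\}$. The simplex is compact and forward invariant by Proposition~\ref{prop:inv}, and $V$ is positive definite. LaSalle's invariance principle then gives convergence of every orbit to $E_0$, and Lyapunov stability follows from the monotone decrease of $V$. In the borderline case $\lambda_{\mathrm{PF}}=1$ the convergence is only algebraic, but the LaSalle argument is unaffected.

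\textbf{Case $\lambda_{\mathrm{PF}}>1$, step 1 (boundary behaviour).}
\begin{itemize}
\item[(a)] Any point of $\partial\Simp$ other than $E_0$ enters $\mathrm{int}(\Simp)$ immediately. On $L=0$ with $0<R<1$ we have $\dot L=\gamma_{RL}R(1-R)>0$, and symmetrically on $R=0$. On $C=0$ we have $\dot C=\mu_LL+\mu_R R>0$. At the vertices $(1,0)$ and $(0,1)$ the orbit moves onto an open edge and then enters.
\item[(b)] $\mathrm{int}(\Simp)$ is forward invariant, since the same sign computations prevent the orbit from reaching the boundary.
\item[(c)] $E_0$ is repelling. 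From $\dot V=(C\lambda_{\mathrm{PF}}-1)\,w^\top x$, we have $\dot V>0$ whenever $C>1/\lambda_{\mathrm{PF}}$. So $V$ increases strictly in a neighbourhood of $E_0$, and no orbit starting in $\Simp\setminus\{E_0\}$ can have $E_0$ in its $\omega$-limit set. A point in the limit set near $E_0$ would force $V$ along the orbit to decrease toward $0$ while inside that neighbourhood, which is impossible. This can be made precise with a Butler--McGehee argument, or with the compact annulus $\{\epsilon\le V\}$, which is forward invariant once $\epsilon$ is small.
\end{itemize}

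\textbf{Step 2 (identifying the $\omega$-limit set).} For a trajectory in $\mathrm{int}(\Simp)$, the $\omega$-limit set is nonempty and compact and, by (c), avoids $E_0$. It cannot meet $\partial\Simp\setminus\{E_0\}$ only in an invariant way, because by (a) there are no invariant sets there. The only equilibria in $\Simp$ are $E_0$ and $E_1$, by Theorem~\ref{thm:PF}: boundary equilibria other than $E_0$ are ruled out by (a). Corollary~\ref{cor:dulac} excludes periodic orbits in the interior. The Dulac function also excludes homoclinic loops and cycles of separatrices in the simply connected region $\mathrm{int}(\Simp)$. By Poincar\'e--Bendixson, the $\omega$-limit set is therefore $\{E_1\}$. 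Combined with the local asymptotic stability from Theorem~\ref{thm:PF}, this gives global asymptotic stability on $\Simp\setminus\{E_0\}$.

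\textbf{Main obstacle.} The delicate step is (c), ruling out $E_0$ from $\omega$-limit sets. I will handle it with the annulus construction: choose $\epsilon>0$ so that $V\le\epsilon$ implies $C>1/\lambda_{\mathrm{PF}}$. Then $\{V\ge\epsilon\}\cap\Simp$ is forward invariant and excludes $E_0$, and every orbit not starting at $E_0$ enters it because $V$ strictly increases below level $\epsilon$.

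One check is needed before relying on this. On the level set $V=\epsilon$ itself we need $\dot V>0$, which holds because $C>1/\lambda_{\mathrm{PF}}$ there. For $V<\epsilon$, $V$ increases at rate at least $c\,V$ with $c>0$, since $w^\top x$ is comparable to $V$. So every such orbit reaches level $\epsilon$ in finite time, and the argument closes.
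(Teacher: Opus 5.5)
Your proof is correct. It has the same skeleton as the paper's: a strict linear Lyapunov function below threshold, and above it the exclusion of $E_0$, boundary analysis, Dulac and Poincar\'e--Bendixson. The Lyapunov weight, however, is genuinely different.

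The paper uses $V=q^\top x$, with $q$ the left Perron vector of the Metzler matrix $K-M$. It writes $\dot x=(K-M)x+N(x)$ with $N(x)=-(L+R)Kx\le0$ and obtains $\dot V=s(K-M)V+q^\top N(x)$. This tacitly relies on the M-matrix equivalence $s(K-M)\gtrless0\iff\lambda_{\mathrm{PF}}(M^{-1}K)\gtrless1$. Above threshold it only gives growth of $V$ on an unspecified neighbourhood of $E_0$, obtained from $N(x)=O(\|x\|^2)$.

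Your weight $w^\top M^{-1}$, taken from the next-generation matrix, yields the exact identity $\dot V=(C\lambda_{\mathrm{PF}}-1)\,w^\top x$ on all of $\Simp$. The threshold appears directly and no M-matrix lemma is needed. $V$ also increases on the explicit region $\{C>1/\lambda_{\mathrm{PF}}\}=\{C>C^*\}$.

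Your annulus argument gives $\liminf_{t\to\infty}V(t)\ge\epsilon$, which removes $E_0$ from every $\omega$-limit set; this is exactly what the Poincar\'e--Bendixson step requires. The paper only shows that no interior trajectory converges to $E_0$ and then cites instability of $E_0$. On its own, that does not rule out $E_0$ belonging to a limit set that contains connecting orbits, so your version of this step is tighter, and closing with local asymptotic stability of $E_1$ is clean.

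What the paper's Metzler formulation buys is reuse. The same $q^\top x$ and spectral bound $s_0$ drive the four-group results (Theorem~\ref{thm:global4}, Proposition~\ref{prop:PF_window}), where the extra term $A(D-K)x$ is compared against $s_0$.

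Two cosmetic points, neither of which affects your conclusion. First, forward invariance of $\mathrm{int}(\Simp)$ in (b) does not follow from the boundary signs alone, since at $(1,0)$ the field $(-\mu_L,0)$ is tangent to the edge $R=0$. It follows at once from $L(t)\ge L(0)e^{-\mu_L t}$, $R(t)\ge R(0)e^{-\mu_R t}$ and $\dot C\ge-\max\{\alpha_L+\gamma_{LR},\alpha_R+\gamma_{RL}\}\,C$. Second, the orbit from $(1,0)$ enters the interior immediately, since $\ddot R(0)=\gamma_{LR}\mu_L>0$; it does not first move along an edge.
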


\begin{proof}
  Write $x=(L,R)^\top$.  The system can be written as
  \[
    \dot x = (K-M)x + N(x),
  \]
  where
  \[
    N(x)=
    \begin{pmatrix}
      -\alpha_L L^2-(\alpha_L+\gamma_{RL})LR-\gamma_{RL}R^2\\[3pt]
      -\gamma_{LR}L^2-(\alpha_R+\gamma_{LR})LR-\alpha_RR^2
    \end{pmatrix}.
  \]
  Thus $N(x)\le0$ componentwise for all $(L,R)\in\Simp$ and $N(x)\neq0$ for
  every $x\neq0$.

  \textbf{Case $\lambda_{\mathrm{PF}}\le1$.}
  Let $q\gg0$ be a positive left Perron eigenvector of the Metzler matrix
  $K-M$, so that
  \[
    q^\top(K-M)=s\,q^\top,
    \qquad s:=s(K-M)\le0.
  \]
  Define $V(x):=q^\top x$.  Then
  \[
    \dot V
    = q^\top(K-M)x + q^\top N(x)
    = s\,V + q^\top N(x)\le0,
  \]
  with strict inequality for every $x\neq0$.  Hence $E_0$ is globally
  asymptotically stable by Lyapunov--LaSalle.

  \textbf{Case $\lambda_{\mathrm{PF}}>1$.}
  Let again $q\gg0$ be the positive left Perron eigenvector of $K-M$, now with
  spectral bound $s=s(K-M)>0$.  Since $N(x)=O(\|x\|^2)$ as $x\to0$, there exists
  a neighbourhood $U$ of $E_0$ in $\mathrm{int}(\Simp)$ such that
  \[
    \dot V \ge \frac{s}{2}V>0
    \qquad\text{for all }x\in U.
  \]
  Hence no trajectory starting in $\mathrm{int}(\Simp)$ can converge to $E_0$.

  By Proposition~\ref{prop:inv}, $\Simp$ is compact and positively invariant.
  By Corollary~\ref{cor:dulac}, there are no periodic orbits in
  $\mathrm{int}(\Simp)$.  The only equilibria are $E_0$ and the unique interior
  equilibrium $E_1$ from Theorem~\ref{thm:PF}.  Therefore, by the
  Poincar\'e--Bendixson theorem \cite{perko2001}, the $\omega$-limit set of any interior
  trajectory must be an equilibrium. Since $E_0$ cannot be an $\omega$-limit
  set of an interior trajectory, one must have
  \[
    \omega(L_0,R_0)=\{E_1\}
    \qquad\text{for every }(L_0,R_0)\in\mathrm{int}(\Simp).
  \]

It remains to show that no trajectory starting in $\mathrm{int}(\Simp)$ can
have an $\omega$-limit point on $\partial\Simp$ other than possibly $E_0$.

On the open edge $\{L=0,\ 0<R<1\}$ one has
\[
  \dot L\big|_{L=0}=\gamma_{RL}R(1-R)>0.
\]
Hence every point of this open edge has a neighbourhood in which the vector
field points strictly into the interior, so no $\omega$-limit set of an
interior trajectory can intersect that edge. The same argument applies to the
open edge $\{R=0,\ 0<L<1\}$.

On the edge $\{C=0\}$, i.e.\ $L+R=1$, one has
\[
  \dot C = -(\dot L+\dot R)=\mu_L L+\mu_R R \ge \min\{\mu_L,\mu_R\}>0.
\]
By continuity, there exists a neighbourhood of $\{C=0\}$ in which $\dot C>0$,
so no interior trajectory can approach this edge in forward time.

Therefore every $\omega$-limit point of an interior trajectory lies in
$\mathrm{int}(\Simp)\cup\{E_0\}$. Since $E_0$ is unstable when
$\lambda_{\mathrm{PF}}>1$, we conclude that
\[
  \omega(L_0,R_0)=\{E_1\}
  \qquad\text{for every }(L_0,R_0)\in\mathrm{int}(\Simp).
\]
Finally, every boundary point of $\Simp\setminus\{E_0\}$ enters the interior
immediately, so the same conclusion holds for all
$(L_0,R_0)\in\Simp\setminus\{E_0\}$.
\end{proof}

\subsection{The bifurcation at \texorpdfstring{$\beta = \mu$}{beta = mu}}

\begin{theorem}[Transcritical bifurcation]\label{thm:bif}
  In the symmetric scalar model \eqref{eq:scalar}, as $\beta=\alpha+\gamma$
  increases through $\mu$:
  \begin{enumerate}
    \item For $\beta<\mu$: only $E_0$ exists in $[0,\tfrac12]$; it is stable.
    \item At $\beta=\mu$: $P^*=0$ coincides with $E_0$; the two equilibria
      collide.
    \item For $\beta>\mu$: $P^*$ bifurcates from $E_0$ into $(0,\tfrac12)$;
      $E_1$ is stable and $E_0$ is unstable.
  \end{enumerate}
  This is a \emph{transcritical bifurcation}: the two branches exchange stability
  as $\beta$ crosses $\mu$.
\end{theorem}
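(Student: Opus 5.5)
The argument works directly with the scalar equation \eqref{eq:scalar}, $\dot P = f(P;\beta) := P[\beta(1-2P)-\mu]$ on $[0,\tfrac12]$, treating $\beta$ as the bifurcation parameter and $\mu>0$ as fixed. We first list the equilibria by factoring $f$. Besides $P=0$, the only other zero is $P^*(\beta)=\tfrac12(1-\mu/\beta)$. This root lies in $(0,\tfrac12)$ exactly when $\beta>\mu$. It equals $0$ when $\beta=\mu$. When $\beta<\mu$ it is negative and so lies outside $[0,\tfrac12]$. These three observations give the existence statements in items~1--3 and the collision in item~2. The point $P=\tfrac12$ (that is, $C=0$) is never an equilibrium, since $f(\tfrac12)=-\mu/2<0$. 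This is consistent with Proposition~\ref{prop:inv}.

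Next we read off stability from the derivative
\[
f_P(P;\beta)=\beta(1-4P)-\mu .
\]
At the origin this gives $f_P(0)=\beta-\mu$, so $E_0$ is hyperbolically stable for $\beta<\mu$ and unstable for $\beta>\mu$. At the interior point, substituting $\beta(1-2P^*)=\mu$ gives
\[
f_P(P^*)=\mu-\beta-2\beta P^* = -(\beta-\mu),
\]
which is negative for $\beta>\mu$. This matches $\lambda_1$ in Theorem~\ref{thm:2d}. The non-hyperbolic case $\beta=\mu$ needs a separate look. There $f=-2\mu P^2\le0$, so $E_0$ attracts all of $[0,\tfrac12]$, but only at an algebraic rate. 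Global stability in items~1 and~3 already follows from Theorem~\ref{thm:global} restricted to the diagonal $D=0$. Alternatively, it follows directly from the sign of $f$ on the intervals between consecutive zeros.

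Finally, we check that the bifurcation is transcritical rather than merely an exchange of stability. We would verify Sotomayor's conditions at $(P,\beta)=(0,\mu)$:
\[
f=0,\qquad f_P=0,\qquad f_\beta=P=0,\qquad f_{P\beta}=1\ne0,\qquad f_{PP}=-4\beta=-4\mu\ne0 .
\]
These are precisely the nondegeneracy conditions for a transcritical bifurcation. The two equilibrium branches are the trivial branch $P\equiv0$ and $P^*(\beta)$. They cross transversally, because $dP^*/d\beta=\mu/(2\beta^2)>0$ at $\beta=\mu$. Along each branch the sign of $f_P$ flips as $\beta$ crosses $\mu$, which is the exchange of stability.

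The only real subtlety is that for $\beta<\mu$ the branch $P^*$ lies at negative $P$, outside the simplex. The transcritical structure therefore has to be stated for the extended equation on $\mathbb R$ and then restricted to $[0,\tfrac12]$. We would make this remark explicitly so that item~1 ("only $E_0$ exists in $[0,\tfrac12]$") is consistent with calling the bifurcation transcritical. Everything else is a routine one-variable computation.
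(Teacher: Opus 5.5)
Your argument is correct and follows the same route as the paper, whose proof consists only of checking $f=0$, $f_P=\beta-\mu=0$, $f_{PP}=-4\beta\neq0$ and $f_{P\beta}=1\neq0$ at $(P,\beta)=(0,\mu)$ and citing the standard transcritical classification. Your explicit equilibrium and stability bookkeeping, the extra condition $f_\beta=0$, and the remark that the branch $P^*$ is negative for $\beta<\mu$ make explicit what the paper leaves implicit; one small slip is that the intermediate step should read $f_P(P^*)=(\mu-2\beta P^*)-\mu=-2\beta P^*=-(\beta-\mu)$, since $\mu-\beta-2\beta P^*$ as written equals $2(\mu-\beta)$, though your final value and sign conclusion are right.
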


\begin{proof}
  The bifurcation occurs at $(P,\beta)=(0,\mu)$.  Write $f(P,\beta)=P[\beta(1-2P)-\mu]
  =\beta P - 2\beta P^2 - \mu P$.
  At this point: $f=0$, $f_P=\beta-\mu=0$, $f_{PP}=-4\beta\ne0$,
  $f_{P\beta}=1-4P\big|_{P=0}=1\ne0$.
  These are exactly the non-degeneracy conditions for a transcritical bifurcation
  in the classification of \cite{strogatz1994}.   
\end{proof}

\begin{corollary}[Comparative statics]\label{cor:cs}
  At the interior equilibrium $E_1$:
  \begin{enumerate}
    \item $\partial P^*/\partial\alpha > 0$: higher recruitment increases radical
      equilibrium.
    \item $\partial P^*/\partial\gamma > 0$: stronger reactive polarisation
      increases radical equilibrium.
    \item $\partial P^*/\partial\mu < 0$: faster deradicalisation decreases
      radical equilibrium.
    \item $\partial C^*/\partial(\alpha+\gamma) < 0$: the centrist share declines
      with both recruitment and polarisation intensity.
    \item $C^*>0$ for all fixed positive parameters; $C^*\to0$ only in singular
      limits, e.g.\ $\mu\to0$ or $\alpha+\gamma\to\infty$.  In particular,
      centrist collapse is impossible at any fixed positive deradicalisation rate
      and any fixed finite recruitment and polarisation rates.
  \end{enumerate}
\end{corollary}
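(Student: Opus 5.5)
The plan is to differentiate the closed-form expressions for the equilibrium directly. In the symmetric case we have $P^*=\tfrac12(1-\mu/\beta)$ with $\beta=\alpha+\gamma$, and $C^*=\mu/\beta$ by Corollary~\ref{cor:Cstar}. Throughout we assume $\beta>\mu$, so that $E_1$ exists (Theorem~\ref{thm:global}). Because $P^*$ depends on $\alpha$ and $\gamma$ only through $\beta$, items~1 and~2 both follow from the single computation
\[
\frac{\partial P^*}{\partial\alpha}=\frac{\partial P^*}{\partial\gamma}=\frac{\partial P^*}{\partial\beta}=\frac{\mu}{2\beta^2}>0 .
\]
Item~3 is similar:
\[
\frac{\partial P^*}{\partial\mu}=-\frac{1}{2\beta}<0 .
\]
Item~4 follows from
\[
\frac{\partial C^*}{\partial\beta}=-\frac{\mu}{\beta^2}<0 ,
\]
which agrees with the monotonicity already recorded in Corollary~\ref{cor:Cstar}.

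For item~5, positivity is immediate: $C^*=\mu/\beta$ is a ratio of positive numbers. It is also finite, and the condition $\beta>\mu$ gives $C^*<1$. To show that $C^*\to0$ happens only in singular limits, I would observe that $C^*$ is a continuous function of $(\mu,\beta)$ on the open set $\{0<\mu<\beta<\infty\}$. On any compact subset of this set, $C^*$ is bounded below by the positive number $\min\mu/\max\beta$. Hence $C^*$ can approach $0$ only along parameter sequences with $\mu\to0$ or $\beta\to\infty$, and the two cited limits realise this.

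The interpretive claim, that centrist collapse is impossible, I would read as asymptotic. Combining $C^*>0$ with the global asymptotic stability of $E_1$ on $\Simp\setminus\{E_0\}$ (Theorem~\ref{thm:global}), every trajectory has $C(t)\to C^*>0$. In the asymmetric setting the analogous statement holds with $C^*=1/\lambda_{\mathrm{PF}}>0$, by Theorems~\ref{thm:PF} and~\ref{thm:global_asym}.

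I expect no real obstacle, since the calculations are one line each. The only point needing care is scope. The corollary is stated "at $E_1$" and uses symmetric notation, so I would restrict explicitly to the symmetric regime $\beta>\mu$. If one also wants the asymmetric analogue of item~4, I would appeal to monotonicity of the Perron root $\lambda_{\mathrm{PF}}$ in the entries of $M^{-1}K$: it is strictly increasing in every $\alpha_i$ and $\gamma_{ij}$ because the matrix is positive and irreducible. From this, $C^*=1/\lambda_{\mathrm{PF}}$ is strictly decreasing in each of these parameters.
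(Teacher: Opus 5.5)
Your proposal is correct and follows the paper's proof. That proof simply differentiates the closed forms $P^*=\tfrac12(1-\mu/\beta)$ and $C^*=\mu/\beta$ with $\beta=\alpha+\gamma$, and your derivatives $\mu/(2\beta^2)$, $-1/(2\beta)$ and $-\mu/\beta^2$ are right. Your extra material goes beyond what the paper states but is sound: the lower bound $C^*\ge\min\mu/\max\beta$ for item~5, the dynamical reading via Theorem~\ref{thm:global}, and the asymmetric extension via strict monotonicity of the Perron root.
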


\begin{proof}
  All statements follow directly from $P^*=\tfrac12(1-\mu/\beta)$ and
  $C^*=\mu/\beta$ with $\beta=\alpha+\gamma$, by differentiation.   
\end{proof}

\section{Numerical Analysis of the Baseline Model}\label{sec:numerics}

All simulations were performed using the \texttt{solve\_ivp} function from
\texttt{SciPy} \cite{scipy2020} with the RK45 solver, relative tolerance
$10^{-10}$, and absolute tolerance $10^{-12}$.  The maximum step size was set
to $0.05$ to ensure accuracy for all parameter configurations.

\subsection{Two qualitative regimes: three representative scenarios}

Figure~\ref{fig:ts} illustrates three representative scenarios within the two
mathematically distinct regimes of Theorem~\ref{thm:global}: the subcritical
case $\beta\le\mu$ and the supercritical coexistence case $\beta>\mu$.  The dashed reference lines
show the theoretical equilibrium values $P^*$ and $C^*=\mu/\beta$ from
Corollary~\ref{cor:Cstar}.

\begin{figure}[ht]
\centering
\includegraphics[width=\textwidth]{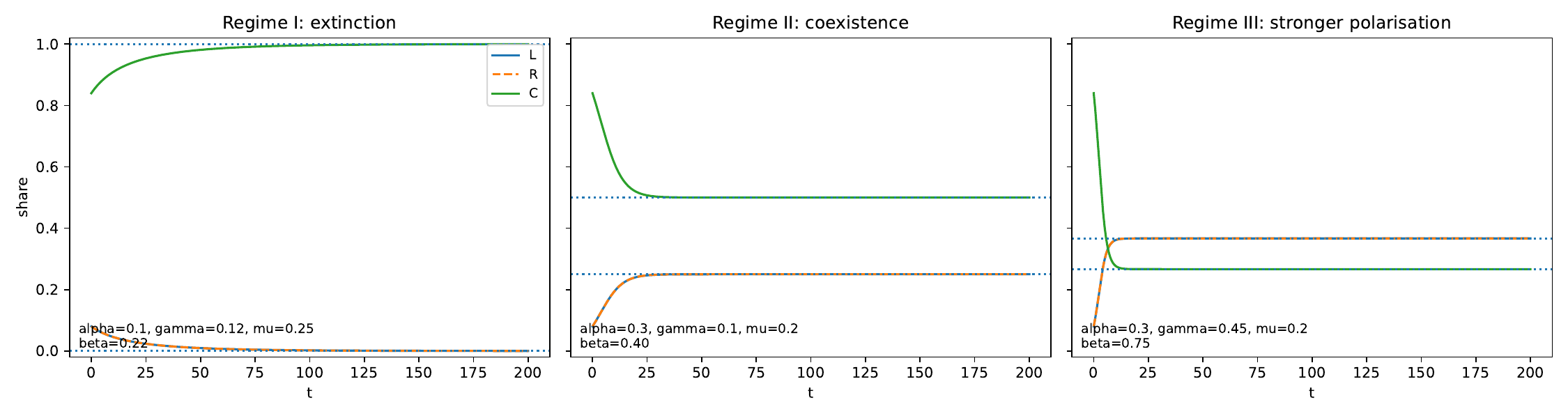}
\caption{Time series for three representative scenarios across the two
  dynamical regimes of Theorem~\ref{thm:global}.
  \textbf{Left} ($\beta=0.22<\mu=0.25$): subcritical case.
    Both radical wings decay; the centrist share approaches~1.
  \textbf{Centre} ($\beta=0.40>\mu=0.20$): supercritical case,
    moderate polarisation.  All groups persist at
    $P^*=0.25$, $C^*=0.50$.
  \textbf{Right} ($\beta=0.75>\mu=0.20$): supercritical case,
    high polarisation.  Strong reactive polarisation shifts the
    equilibrium to $P^*=0.367$, $C^*=0.267$.
  Both right-panel cases belong to the same qualitative regime
  ($\beta>\mu$); they differ only in the degree of polarisation
  at equilibrium.  Dashed lines show analytical predictions;
  numerical and analytical values agree within tolerance $10^{-8}$.}
\label{fig:ts}
\end{figure}

\subsection{Phase portraits}

Figure~\ref{fig:phase} shows phase portraits for the same three parameter sets,
with multiple initial conditions.  The global convergence proved in
Theorem~\ref{thm:global} is confirmed: all trajectories from the interior of
$\Simp$ converge to the same attractor regardless of the starting point.

\begin{figure}[ht]
\centering
\includegraphics[width=\textwidth]{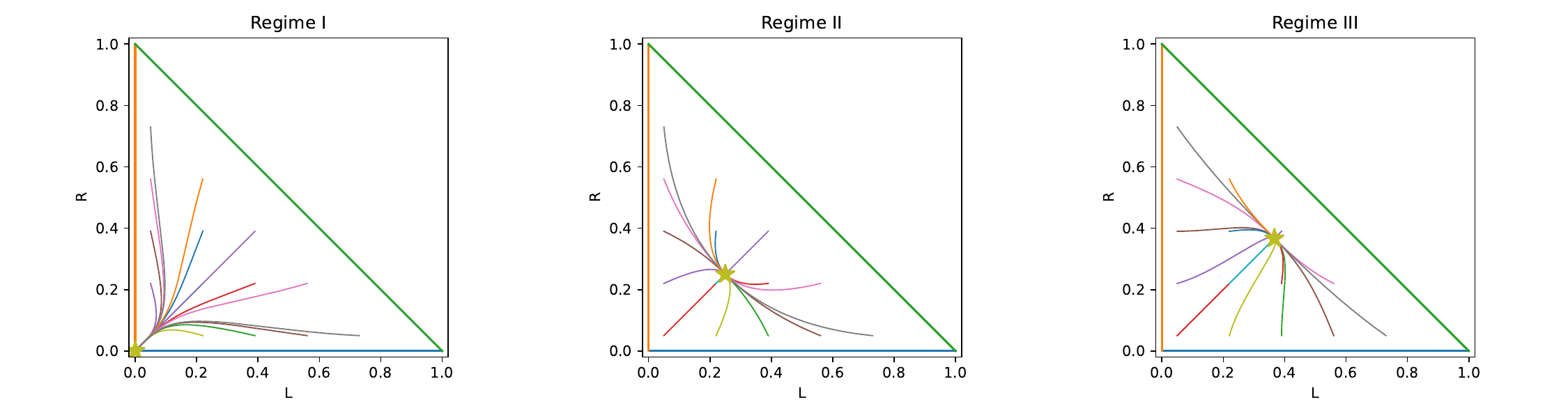}
\caption{Phase portraits in the feasible simplex $\Simp$.  Stars mark attracting
  equilibria.  All trajectories converge to the same fixed point, confirming
  global stability.  In Regime~I the attractor is $E_0=(0,0)$; in Regimes~II
  and~III it is the interior equilibrium $E_1=(P^*,P^*)$.
  No limit cycles are present, consistent with the Dulac criterion
  (Corollary~\ref{cor:dulac}).}
\label{fig:phase}
\end{figure}

\subsection{Bifurcation diagram}

Figure~\ref{fig:bif} shows the dependence of the equilibrium shares $P^*$ and
$C^*$ on the polarisation parameter $\gamma$ for three values of the recruitment
rate $\alpha$.  The transcritical bifurcation at $\beta=\mu$ (i.e.\ $\gamma=\mu-\alpha$)
is clearly visible.  The key qualitative feature, confirmed analytically in
Corollary~\ref{cor:cs}, is that $C^*=\mu/(\alpha+\gamma)>0$ for all finite
parameters: the centrist share is eroded by both recruitment and polarisation but
\emph{never reaches zero}.

\begin{figure}[ht]
\centering
\includegraphics[width=\textwidth]{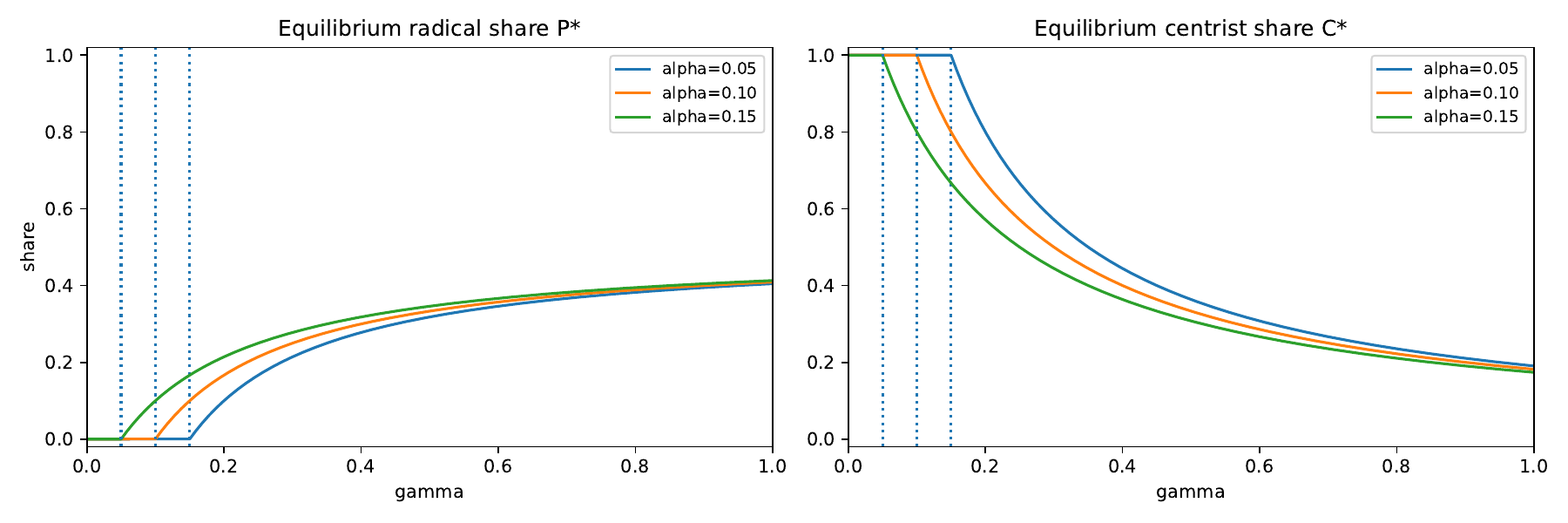}
\caption{Equilibrium radical share $P^*$ (left) and centrist share $C^*$ (right)
  as functions of $\gamma$ for $\alpha\in\{0.05,0.10,0.15\}$ and $\mu=0.20$.
  The bifurcation threshold $\gamma^*=\mu-\alpha>0$ is clearly visible as the
  left endpoint of each curve.
  The hyperbolic decay $C^*=\mu/(\alpha+\gamma)$ (right panel) approaches but
  never reaches zero over the plotted parameter range.}
\label{fig:bif}
\end{figure}

In the asymmetric case the equilibrium shifts toward the stronger wing,
as determined by the Perron eigenvector of $M^{-1}K$ (Theorem~\ref{thm:PF}).
The centrist share at equilibrium remains $C^*=1/\lambda_{\mathrm{PF}}$,
strictly positive for all finite positive parameters.

\paragraph{Additional baseline numerics.}
Further baseline simulations---temporary external shocks of varying intensity
and a sensitivity contour of $C^*=\mu/(\alpha+\gamma)$ across the
$(\alpha,\gamma)$ plane---directly confirm results already proved analytically
in Theorem~\ref{thm:global}, Theorem~\ref{thm:global_asym}, and
Corollary~\ref{cor:Cstar}.  These figures are available as supplementary
material upon request.

\section{The Four-Group Disengagement-Shock Model}\label{sec:model4}

\subsection{Structural limitations of the baseline and motivation}\label{sec:scope}

For fixed positive parameters, the baseline model has a unique global attractor
(Theorem~\ref{thm:global} in the symmetric case,
Theorem~\ref{thm:global_asym} in the asymmetric case).
Consequently, the long-run centrist share depends only on the current parameter
values, and staircase dynamics, shock-history dependence, and bistability are
all impossible.

\begin{corollary}[Impossibility of staircase dynamics]\label{cor:nostaircase}
  Within the baseline model with fixed positive parameters,
  the following are impossible for any initial conditions:
  \begin{enumerate}[label=(\roman*)]
    \item a monotonically decreasing sequence of long-run centrist shares
      $C_1^\infty > C_2^\infty > \cdots$ produced by state perturbations;
    \item a long-run centrist share $C^\infty$ that depends on shock history
      rather than only on the current parameter values;
    \item bistability: two distinct long-run attractors reachable from
      different initial conditions under the same parameters.
  \end{enumerate}
\end{corollary}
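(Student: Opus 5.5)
The plan is to derive all three items from the global classification already proved. By Theorem~\ref{thm:global_asym} (and Theorem~\ref{thm:global} in the symmetric case), for fixed positive parameters the $\omega$-limit set of every initial condition in $\Simp$ is a single equilibrium. Which equilibrium it is depends only on the parameters and on whether the state is $E_0$:
\begin{itemize}[label=\textbullet]
  \item if $\lambda_{\mathrm{PF}}\le1$, every trajectory tends to $E_0$, so $C^\infty=1$;
  \item if $\lambda_{\mathrm{PF}}>1$, every trajectory from $\Simp\setminus\{E_0\}$ tends to $E_1$, so $C^\infty=C^*=1/\lambda_{\mathrm{PF}}$ by Theorem~\ref{thm:PF}.
\end{itemize}
I would first record this as a lemma: the map from initial state $x_0$ to long-run centrist share, $x_0\mapsto C^\infty(x_0)$, takes at most two values. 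These values are $1$ (attained only at $x_0=E_0$ when $\lambda_{\mathrm{PF}}>1$) and $1/\lambda_{\mathrm{PF}}$. Both are determined by $(K,M)$ alone.

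Item (iii) then follows directly. An attractor is a compact invariant set attracting an open neighbourhood. When $\lambda_{\mathrm{PF}}>1$, $E_0$ attracts no point other than itself, since the proof of Theorem~\ref{thm:global_asym} shows $\dot V\ge\tfrac{s}{2}V$ near $E_0$. It is therefore not an attractor, and $E_1$ is the unique one. When $\lambda_{\mathrm{PF}}\le1$, $E_0$ is the unique attractor.

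For item (ii), model a shock history as a sequence of instantaneous state perturbations $x\mapsto x'\in\Simp$ at times $t_1<t_2<\cdots<t_n$, with parameters held fixed. After the final shock the trajectory again obeys \eqref{eq:model}, so the lemma gives $C^\infty\in\{1,1/\lambda_{\mathrm{PF}}\}$. The value $1$ occurs only if the post-shock state is exactly $E_0$; for $\lambda_{\mathrm{PF}}\le1$ it is $1$ in every case. In either situation the value is independent of the shock times, the amplitudes, and the pre-shock path. The single exceptional state, the point $E_0$, should be addressed explicitly. Landing exactly on $E_0$ means the radical seed has been erased, which is a nongeneric event (measure zero). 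I would state item (ii) as holding for all shocks leaving a nonzero radical seed, or equivalently note that $E_0$ is not an attractor.

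Item (i) follows from item (ii). Since every post-shock long-run share equals the same value $1/\lambda_{\mathrm{PF}}$, any sequence $C_k^\infty$ generated by successive state shocks is constant, so no strict decrease $C_1^\infty>C_2^\infty>\cdots$ can occur. The only subtle point is the excluded $E_0$ case. A jump from $1$ to $1/\lambda_{\mathrm{PF}}$ could occur at most once, and only if the initial state is exactly $E_0$. That gives at most one step, never a sequence of length $\ge3$, and it is again excluded by the nonzero-seed convention.

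I expect the main obstacle to be the precise formalisation rather than any estimate. One issue is making ``shock history'' and ``attractor'' rigorous. The other is handling the measure-zero exceptional point $E_0$ cleanly when $\lambda_{\mathrm{PF}}>1$, which I would do with the non-attractor argument above.
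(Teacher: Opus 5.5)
Your proposal is correct and follows the paper's argument, made explicit: the paper's proof is a one-line appeal to uniqueness of the global attractor in Theorems~\ref{thm:global} and~\ref{thm:global_asym}, and your two-valued lemma for $x_0\mapsto C^\infty(x_0)$ spells that out. Your handling of the exceptional point $E_0$ when $\lambda_{\mathrm{PF}}>1$ is a genuine refinement that the paper's statement and proof pass over, since there a two-term drop $1>1/\lambda_{\mathrm{PF}}$ or a history-dependent limit is possible unless a nonzero radical seed is assumed; for (iii), say that every attractor contains $E_1$ rather than that $E_1$ is the unique one, because under your definition larger compact invariant sets containing $E_0$ also attract open neighbourhoods.
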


\begin{proof}
  Follows directly from uniqueness of the global attractor in
  Theorems~\ref{thm:global} and~\ref{thm:global_asym}.
\end{proof}

Corollary~\ref{cor:nostaircase} is the precise mathematical statement of the
baseline model's structural incompleteness for post-2008 European dynamics.
Two minimal extensions suffice to remove this incompleteness: (a)~a disengaged voter
compartment $A$, so that crisis-induced withdrawal can form a transient
reservoir susceptible to radical mobilisation \cite{norris2019,dalton2004};
and (b)~structural (parameter-shifting) shocks, so that repeated crises can
permanently alter the recruitment/deradicalisation balance---an institutional
channel analysed in a different framework by \cite{levitsky2018}---producing a
cumulative drift $\beta_0 + \sum_k \Delta\beta_k$ that may cross the threshold
$\beta=\mu$.  The following subsections formalise these two additions.

\subsection{State space}

The extension adds a disengaged compartment $A(t)$ to the baseline partition.
Throughout this section, $A(t)$ is interpreted as excess crisis-induced
disengagement above a background abstention level. The conservation law becomes
\begin{equation}\label{eq:conserve4}
  L(t) + R(t) + C(t) + A(t) = 1 \qquad (t\ge0),
\end{equation}
so $C = 1-L-R-A$ and the feasible state space is the standard $3$-simplex
\[
  \mathcal{T} = \{(L,R,A)\in\R^3 : L\ge0,\;R\ge0,\;A\ge0,\;L+R+A\le1\}.
\]

\subsection{Governing equations}

\begin{definition}[Four-group model]\label{def:4group}
  The \emph{four-group disengagement-shock model} is the system
  \begin{equation}\label{eq:4group}
  \begin{aligned}
    \dot{L} &= \alpha_L\,L\,C + \gamma_{RL}\,R\,C + \delta_L\,A\,L - \mu_L\,L, \\
    \dot{R} &= \alpha_R\,R\,C + \gamma_{LR}\,L\,C + \delta_R\,A\,R - \mu_R\,R, \\
    \dot{A} &= \sigma(t)\,C - \delta_L\,A\,L - \delta_R\,A\,R - \rho\,A,
  \end{aligned}
  \end{equation}
  with $C = 1-L-R-A$ and all parameters
  $\alpha_i,\mu_i,\gamma_{ij},\delta_i,\rho > 0$.
\end{definition}

The new terms have direct interpretations.  The factors $\delta_iAX_i$ model
mobilisation of disengaged voters by radical movements; one typically expects
$\delta_i>\alpha_i$, since disengaged voters have already withdrawn from the
mainstream \cite{norris2019}.  The term $\sigma(t)C$ transfers centrists into
disengagement during a crisis, while $\rho A$ returns disengaged voters to the
centre.  The additional parameters are summarised in Table~\ref{tab:params4}.

\begin{table}[ht]
\caption{New parameters introduced in the four-group model.}\label{tab:params4}
\centering
\renewcommand{\arraystretch}{1.4}
\begin{tabular}{lll}
\toprule
Parameter & Interpretation & Expected range \\
\midrule
$\delta_L, \delta_R$ & Apolitical mobilisation rate (by radicals) & $\delta_i > \alpha_i$ \\
$\rho$               & Spontaneous re-engagement rate              & $(0,\infty)$ \\
$\sigma(t)$          & External shock intensity                    & $\ge 0$ \\
\bottomrule
\end{tabular}
\end{table}

\subsection{The impulse shock model}\label{sec:impulse}

A crisis is modelled as an instantaneous transfer of a fraction of the current
centrist pool into disengagement.

\begin{definition}[Impulse shock]\label{def:impulse}
  An \emph{impulse shock} of amplitude $s>0$ at time $t_0$ corresponds to
  setting $\sigma(t) = s\,\delta_{\mathrm{D}}(t-t_0)$ in \eqref{eq:4group},
  where $\delta_{\mathrm{D}}$ is the Dirac delta distribution.  This produces
  the jump condition at $t_0$:
  \begin{equation}\label{eq:jump}
    C(t_0^+) = (1-\Delta)\,C(t_0^-), \qquad
    A(t_0^+) = A(t_0^-) + \Delta\,C(t_0^-),
  \end{equation}
  with $\Delta := 1-e^{-s}\approx s$ for small $s$, and $L$, $R$ unchanged.
  The system then evolves autonomously from $(L_0,R_0,A_0)$ with $\sigma\equiv0$.
\end{definition}

After the jump the post-shock dynamics are autonomous, which cleanly separates
the shock event from its subsequent consequences.

\subsection{Structural vs.\ state shocks}\label{sec:structural}

Definition~\ref{def:impulse} describes a \emph{state shock}: the parameters
remain unchanged and only the state is perturbed.  We additionally define:

\begin{definition}[Structural shock]\label{def:structural}
  Let $\theta$ denote the vector of model parameters. A
  \emph{structural shock} at time $t_0$ consists of:
  \begin{enumerate}[label=(\roman*)]
    \item a state component: an impulse shock of amplitude $\Delta$ as in
      Definition~\ref{def:impulse};
    \item a parameter component: a permanent parameter change
      $\theta^- \to \theta^+$ taking effect at $t_0$ and persisting for all
      $t>t_0$.
  \end{enumerate}
  A shock with $\theta^+=\theta^-$ is a \emph{pure state shock}. In the
  symmetric reduction considered below, the parameter component is taken to be
  the scalar shift $\beta^+=\beta^-+\Delta\beta$ with the remaining parameters
  unchanged.
\end{definition}

A pure state shock changes only the point in state space; a structural shock
changes both the state and the post-shock parameter regime.  This distinction
is the organising principle of the extended model.

\section{Mathematical Analysis of the Four-Group Model}\label{sec:analysis4}

\subsection{Forward invariance}

\begin{proposition}[Forward invariance of $\mathcal{T}$]\label{prop:inv4}
  For any $(L_0,R_0,A_0)\in\mathcal{T}$ and positive parameters, the
  solution of \eqref{eq:4group} with $\sigma\equiv0$ remains in
  $\mathcal{T}$ for all $t\ge0$.
\end{proposition}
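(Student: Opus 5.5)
The plan is to mirror the proof of Proposition~\ref{prop:inv}. We check that on every face of the tetrahedron $\mathcal{T}$ the vector field of \eqref{eq:4group} (with $\sigma\equiv0$) is tangent to the face or points inward. We then conclude by a standard invariance argument for locally Lipschitz (indeed polynomial) vector fields. The four faces are $L=0$, $R=0$, $A=0$ and $C=0$, the last being $L+R+A=1$.

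\emph{Face $L=0$.} Here $\dot L=\gamma_{RL}RC\ge0$, since $R,C\ge0$ on $\mathcal{T}$; the $\delta_L AL$ and $\mu_L L$ terms vanish. \emph{Face $R=0$.} Symmetrically, $\dot R=\gamma_{LR}LC\ge0$. \emph{Face $A=0$.} With $\sigma\equiv0$ every term of $\dot A$ carries a factor $A$, so $\dot A=0$. This means the face $\{A=0\}$ is itself invariant, and on it the system reduces exactly to the baseline model \eqref{eq:model}. \emph{Face $C=0$.} Adding the three equations, the $\delta_iAX_i$ terms cancel between the radical and disengaged equations, giving
\[
\dot C=-(\dot L+\dot R+\dot A)=\mu_LL+\mu_RR+\rho A\ge0 .
\]
We also note that $\dot L=\delta_LAL-\mu_LL$ there, but only the sign of $\dot C$ matters for this face.

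To make the boundary-tangency argument rigorous at edges and vertices, where several faces meet and a merely weak inequality $\ge0$ could in principle allow an exit, I would use the standard quasi-positivity criterion. Each coordinate $x_i\in\{L,R,A,C\}$ satisfies $\dot x_i\ge0$ whenever $x_i=0$ and the other coordinates are nonnegative. Moreover, $\dot x_i$ can be written as $x_i\,g_i(x)+h_i(x)$ with $g_i$ continuous and $h_i\ge0$ on $\mathcal{T}$: for instance $\dot A=A(-\delta_LL-\delta_RR-\rho)$ and $\dot C=C(-\alpha_LL-\dots)+\mu_LL+\mu_RR+\rho A$. A Grönwall comparison then gives $x_i(t)\ge x_i(0)\exp(\int_0^t g_i)\ge0$ as long as the others stay nonnegative. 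Alternatively, I would perturb the field by $\varepsilon(1,1,1)$-type inward terms and pass to the limit $\varepsilon\to0$ using continuous dependence.

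Global existence then follows because $\mathcal{T}$ is compact, so solutions cannot blow up. The main point needing care is the edge/vertex issue, together with expressing $\dot C$ in the form above. For $\dot C$ this requires expanding $-(\dot L+\dot R)$ and collecting the $C$-factored recruitment terms, which is routine.
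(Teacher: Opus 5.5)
Your proposal is correct and follows essentially the paper's proof: the same face-by-face sign checks on $L=0$, $R=0$, $A=0$ and $C=0$, including the cancellation of the $\delta_i A X_i$ terms that gives $\dot C=\mu_L L+\mu_R R+\rho A\ge0$ on $C=0$. Your quasi-positivity decomposition $\dot x_i=x_i g_i+h_i$ with Gr\"onwall (or the perturbation argument), together with the compactness remark for global existence, only makes explicit the invariance step that the paper leaves to uniqueness via Picard--Lindel\"of.
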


\begin{proof}
  We verify that the vector field points inward (or is tangent) on each
  face of $\mathcal{T}$.

  \emph{Face $L=0$:}
  $\dot{L}\big|_{L=0} = \gamma_{RL}\,R\,C \ge 0$.

  \emph{Face $R=0$:}
  $\dot{R}\big|_{R=0} = \gamma_{LR}\,L\,C \ge 0$.

  \emph{Face $A=0$:}
  $\dot{A}\big|_{A=0} = \sigma(t)\,C \ge 0$
  (with $\sigma \equiv 0$ post-shock, $\dot{A}\big|_{A=0} = 0$).

  \emph{Face $C=0$ (i.e.\ $L+R+A=1$):}
  Here $C=0$.  Then $\dot{L}\big|_{C=0} = \delta_L A L - \mu_L L = L(\delta_L A - \mu_L)$
  and similarly for $R$.  Also $\dot{A}\big|_{C=0} = -A(\delta_L L + \delta_R R + \rho) \le 0$.
  Therefore
\begin{align*}
    \dot{L}+\dot{R}+\dot{A}\big|_{C=0}
    &= L(\delta_L A-\mu_L) + R(\delta_R A-\mu_R) - A(\delta_L L + \delta_R R + \rho)\\
    &= -\mu_L L - \mu_R R - \rho A \le 0.
  \end{align*}
  Hence $\dot{C} = -(\dot{L}+\dot{R}+\dot{A}) \ge 0$: the total $L+R+A$
  cannot increase beyond~1.

  By the Picard--Lindel\"of theorem, the solution is unique, and $\mathcal{T}$
  is positively invariant.   
\end{proof}

\begin{remark}
  Unlike the baseline, the face $C=0$ now admits trajectories that dwell on
  it for finite time if $\mu_L = \mu_R = \rho = 0$.  For all strictly
  positive parameters (our standing assumption), the system is forced back
  into the interior immediately, since
  $\dot{C}\big|_{C=0} = \mu_L L + \mu_R R + \rho A > 0$
  whenever $(L,R,A)\ne(0,0,0)$.
\end{remark}

\subsection{Symmetric reduction}

\begin{definition}[Symmetric four-group model]\label{def:sym4}
  The model \eqref{eq:4group} is \emph{symmetric} if
  $\alpha_L=\alpha_R=\alpha$, $\mu_L=\mu_R=\mu$, $\gamma_{LR}=\gamma_{RL}=\gamma$,
  $\delta_L=\delta_R=\delta$, and $L(0)=R(0)$.
\end{definition}

Under the symmetric ansatz, $L(t)=R(t)=:P(t)$ for all $t$ (by the same
diagonal-invariance argument as in the baseline), and the system reduces to
the \emph{two-dimensional post-shock system}:
\begin{equation}\label{eq:2d}
\begin{aligned}
  \dot{P} &= P\bigl[(\alpha+\gamma)(1-2P-A) + \delta A - \mu\bigr]
           = P\bigl[(\beta-\mu) - 2\beta P + (\delta-\beta)A\bigr], \\
  \dot{A} &= -(2\delta P + \rho)\,A,
\end{aligned}
\end{equation}
where $\beta:=\alpha+\gamma$ and $C=1-2P-A$.  The feasible region for
\eqref{eq:2d} is
\[
  \mathcal{F} = \{(P,A) : P\ge0,\;A\ge0,\;2P+A\le1\}.
\]

\subsection{Equilibrium classification}

\begin{theorem}[Equilibria of the symmetric four-group model]\label{thm:eq4}
  System \eqref{eq:2d} with $\sigma\equiv0$ has exactly two equilibria
  in $\mathcal{F}$:
  \begin{enumerate}[label=(\roman*)]
    \item $E_0 = (P=0,\,A=0)$: the centrist-only state ($C=1$).
    \item $E_1 = \bigl(P^*=\tfrac12(1-\mu/\beta),\;A=0\bigr)$:
      coexistence, existing in the interior of $\mathcal{F}$ if and only
      if $\beta>\mu$.
  \end{enumerate}
  No equilibrium with $A^*>0$ exists.
\end{theorem}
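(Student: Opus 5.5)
Setting $\dot P=\dot A=0$ in \eqref{eq:2d} with $\sigma\equiv0$, I would start from the $A$-equation, which factors as $-(2\delta P+\rho)A=0$. On $\mathcal{F}$ we have $P\ge0$, and $\delta,\rho>0$ by the standing assumption, so $2\delta P+\rho\ge\rho>0$. Hence every equilibrium in $\mathcal{F}$ has $A^*=0$. This already gives the final claim that no equilibrium with $A^*>0$ exists. It is the only place where positivity of $\rho$ is used essentially: without re-engagement, the whole edge $P=0$ would consist of equilibria.

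With $A=0$, the $P$-equation reduces to $P[(\beta-\mu)-2\beta P]=0$, which is exactly the scalar equation \eqref{eq:scalar} of the baseline symmetric model. Its roots are $P=0$, giving $E_0$, and $P^*=\tfrac12(1-\mu/\beta)$. I would then check when $P^*$ lies in $\mathcal{F}$ and when it is interior. The constraints are $P^*\ge0$ and $2P^*\le1$. The second holds automatically, since $2P^*=1-\mu/\beta<1$ because $\mu,\beta>0$. The first holds iff $\beta\ge\mu$, and at $\beta=\mu$ the root $P^*=0$ coincides with $E_0$, in line with the collision in Theorem~\ref{thm:bif}. So a second distinct equilibrium exists iff $\beta>\mu$. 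In that case $P^*>0$ and $C^*=1-2P^*=\mu/\beta>0$, matching Corollary~\ref{cor:Cstar}.

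The one point needing care is the phrase ``interior of $\mathcal{F}$'', since $E_1$ has $A=0$ and so sits on the face $A=0$ of $\mathcal{F}$. I would read the statement as saying that $E_1$ is interior to that face, with $P^*>0$ and $C^*>0$, and is not the boundary equilibrium $E_0$; I would make this precise in the write-up. Finally, I would note that ``exactly two'' holds for $\beta>\mu$, while for $\beta\le\mu$ only $E_0$ lies in $\mathcal{F}$, as in Theorem~\ref{thm:bif}.
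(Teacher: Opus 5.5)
Your proof is correct and follows the paper's argument: the positive factor $2\delta P+\rho$ forces $A^*=0$, the $P$-equation then reduces to the baseline scalar equation with roots $P=0$ and $P^*=\tfrac12(1-\mu/\beta)$, and $P^*>0$ holds iff $\beta>\mu$. Your two caveats are accurate refinements of the statement's wording, which the paper's one-line check ($P^*\in(0,\tfrac12)$ iff $\beta>\mu$) passes over: $E_1$ lies on the face $A=0$ rather than in the interior of $\mathcal{F}$, and ``exactly two'' equilibria exist only when $\beta>\mu$.
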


\begin{proof}
  From $\dot{A}=0$: since $2\delta P+\rho>0$, we obtain $A^*=0$.
  With $A=0$, $\dot{P}=P[(\beta-\mu)-2\beta P]=0$ gives $P=0$ or
  $P^*=\tfrac12(1-\mu/\beta)$.  The condition $P^*\in(0,\tfrac12)$
  is equivalent to $\beta>\mu$.   
\end{proof}

\begin{remark}[Interpretation of $A(t)\to0$ and transience of disengagement]
\label{rem:Abase}
  Theorem~\ref{thm:eq4} establishes that $A^*=0$ at every equilibrium,
  which might appear to predict 100\% voter turnout in the long run.
  The correct interpretation is that $A(t)$ models the \emph{excess}
  (crisis-induced) disengagement above a stable baseline $A_{\mathrm{base}}>0$
  representing chronic non-participation; $\tilde{A}(t)\to0$ means the system
  returns to its pre-crisis abstention level, not that abstention disappears.
  The transience of $A$ is not a modelling assumption but a consequence of
  the positive depletion rates ($\rho>0$, $\delta>0$): in the long run every
  voter either joins a radical movement or returns to the mainstream.  The
  empirically important dynamics occur in the transient phase, which can last
  for years if $\rho$ is small.
\end{remark}

\begin{proposition}[Equilibrium inheritance from the baseline]\label{prop:inherit4}
  For the full four-group model \eqref{eq:4group} with $\sigma\equiv0$ and
  all parameters positive, every equilibrium has $A^*=0$. Consequently,
  the equilibria of the full four-group model are exactly the equilibria
  of the baseline model \eqref{eq:model} on the invariant face $A=0$.

  Moreover, at any such equilibrium the Jacobian is block upper-triangular:
  \[
    J_4(L^*,R^*,0)=
    \begin{pmatrix}
      J_{\mathrm{base}}(L^*,R^*) & *\\
      0\;\;0 & -(\delta_L L^*+\delta_R R^*+\rho)
    \end{pmatrix},
  \]
  so local stability is inherited from the baseline model together with one
  additional strictly negative transverse eigenvalue
  $-(\delta_L L^*+\delta_R R^*+\rho)<0$.
\end{proposition}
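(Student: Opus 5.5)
The plan has three steps: show that $A^*=0$ at every equilibrium, identify what remains with the baseline equilibria, and compute the Jacobian on the face $A=0$.

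\emph{Step 1: $A^*=0$.} Set $\dot A=0$ in \eqref{eq:4group} with $\sigma\equiv0$. This gives $A^*(\delta_L L^*+\delta_R R^*+\rho)=0$. On $\mathcal{T}$ we have $L^*,R^*\ge0$, and $\rho>0$, so the bracket is strictly positive and hence $A^*=0$. This repeats the argument of Theorem~\ref{thm:eq4} without the symmetry assumption.

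\emph{Step 2: identification with the baseline.} Substitute $A=0$ into the $\dot L$ and $\dot R$ equations. The $\delta_i A X_i$ terms vanish and $C=1-L-R$, so these equations coincide exactly with \eqref{eq:model}. Conversely, any baseline equilibrium $(L^*,R^*)$ extended by $A=0$ satisfies all three equations, because $\dot A|_{A=0}=0$. The face $A=0$ is invariant for the same reason. This gives a bijection between the two equilibrium sets.

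\emph{Step 3: the Jacobian.} Let $F_3=-\delta_L AL-\delta_R AR-\rho A$. Then $\partial F_3/\partial L=-\delta_L A$ and $\partial F_3/\partial R=-\delta_R A$, and both vanish at $A=0$. We also have $\partial F_3/\partial A=-(\delta_L L+\delta_R R+\rho)$.

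For $i=1,2$, the partial derivatives of $F_i$ with respect to $L$ and $R$ at $A=0$ must agree with \eqref{eq:jacobian}. Here one has to be careful, since $C=1-L-R-A$ depends on $A$. The $\delta_i A X_i$ terms contribute $\delta_i A$ to the $L$- and $R$-derivatives, and this is zero at $A=0$. The remaining terms are the baseline terms, and their $L$- and $R$-derivatives are unchanged when $A$ is set to $0$ after differentiation. So the upper-left block is $J_{\mathrm{base}}(L^*,R^*)$.

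The column $\partial F_i/\partial A$ is the unspecified block $*$. It contains entries such as $-\alpha_L L-\gamma_{RL}R+\delta_L L$, which play no role in the eigenvalues.

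Because the matrix is block upper-triangular, its spectrum is $\mathrm{spec}\,J_{\mathrm{base}}\cup\{-(\delta_L L^*+\delta_R R^*+\rho)\}$. The extra eigenvalue is negative because $\rho>0$. Local stability of each equilibrium then follows from Theorem~\ref{thm:PF} (interior equilibrium) and from the $E_0$ remark (centrist equilibrium).

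None of these steps is a genuine obstacle. The only point requiring care is the bookkeeping in Step 3, namely that $C$ depends on $A$. This affects only the third column, not the bottom row or the baseline block.
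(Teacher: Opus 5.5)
Your proof is correct and follows the paper's argument. The relation $\dot A=-A(\delta_L L+\delta_R R+\rho)$ forces $A^*=0$; substituting $A=0$ recovers the baseline system; and direct differentiation gives the block upper-triangular Jacobian with bottom row $(0,0,-(\delta_L L^*+\delta_R R^*+\rho))$. You add details the paper leaves implicit (the converse inclusion, the use of $L^*,R^*\ge0$ to make the coefficient positive, and the explicit $*$ column), but the route is the same.
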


\begin{proof}
  At any equilibrium, $0=\dot{A}=-A(\delta_L L+\delta_R R+\rho)$.  Since
  $\delta_L,\delta_R,\rho>0$, the coefficient is strictly positive, hence
  $A^*=0$.  Substituting $A=0$ reduces \eqref{eq:4group} to exactly the
  baseline equations~\eqref{eq:model}.  The block-triangular Jacobian
  follows by direct differentiation.   
\end{proof}

\begin{proposition}[Exponential decay of the disengaged compartment]
  \label{prop:Adecay4}
  For every solution of the post-shock autonomous system \eqref{eq:4group}
  with $\sigma\equiv0$,
  \[
    A(t)=A(0)\exp\!\left(
      -\int_0^t (\delta_L L(s)+\delta_R R(s)+\rho)\,ds
    \right)
    \le A(0)e^{-\rho t}.
  \]
  In particular, $A(t)\to0$ exponentially as $t\to\infty$.
\end{proposition}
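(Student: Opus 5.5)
The plan is to solve the $A$-equation explicitly as a scalar linear ODE along a given trajectory. With $\sigma\equiv0$ the third equation of \eqref{eq:4group} reads
\[
  \dot A = -k(t)\,A,\qquad k(t):=\delta_L L(t)+\delta_R R(t)+\rho .
\]
By Proposition~\ref{prop:inv4}, the solution $(L(t),R(t),A(t))$ exists for all $t\ge0$ and stays in the compact set $\mathcal T$. Hence $k$ is a continuous, bounded function of $t$ alone.

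First, I multiply by the integrating factor $\exp(\int_0^t k(s)\,ds)$. This gives $\frac{d}{dt}\bigl[A(t)\exp(\int_0^t k)\bigr]=0$, and therefore
\[
  A(t)=A(0)\exp\!\left(-\int_0^t k(s)\,ds\right).
\]
This is exactly the displayed formula. Uniqueness of the solution, by Picard--Lindel\"of as already used in Proposition~\ref{prop:inv4}, guarantees that the $A$-component of the trajectory coincides with this expression.

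Second, forward invariance gives $L(s),R(s)\ge0$. With $\delta_L,\delta_R>0$ this yields $k(s)\ge\rho$ for all $s\ge0$, so $\int_0^t k(s)\,ds\ge\rho t$. Since $A(0)\ge0$ and the exponential is monotone, we get $A(t)\le A(0)e^{-\rho t}$. Because $\rho>0$, $A(t)\to0$ exponentially, at rate at least $\rho$. A further consequence is that $A(t)\ge0$, and $A(t)>0$ whenever $A(0)>0$, which is consistent with invariance of the face $A=0$.

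There is no real obstacle here. The only point needing care is that the nonnegativity of $L$ and $R$ must come from Proposition~\ref{prop:inv4} rather than be assumed, since it is exactly what allows the $\delta$-terms to be dropped in the lower bound on $k$.
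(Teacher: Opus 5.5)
Your proof is correct and follows the paper's argument. Both solve the scalar linear equation $\dot A=-(\delta_L L+\delta_R R+\rho)A$ along the trajectory and bound the integrand below by $\rho$; your integrating factor (which avoids dividing by $A$) and your explicit appeal to Proposition~\ref{prop:inv4} for $L,R\ge0$ are small gains in rigor over the paper's one-line version.
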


\begin{proof}
  The equation
  \[
    \dot A = -(\delta_L L+\delta_R R+\rho)A
  \]
  is scalar and separable.  Integrating from $0$ to $t$ gives the exact
  formula.  Since $\delta_L L+\delta_R R+\rho\ge\rho>0$, the exponential bound
  follows immediately.   
\end{proof}

\begin{theorem}[Global dynamics of the full asymmetric four-group model]
  \label{thm:global4}
  Consider the post-shock autonomous system \eqref{eq:4group} with
  $\sigma\equiv0$.  Define
  \[
    K:=\begin{pmatrix}
      \alpha_L & \gamma_{RL}\\
      \gamma_{LR} & \alpha_R
    \end{pmatrix},
    \qquad
    M:=\begin{pmatrix}
      \mu_L & 0\\
      0 & \mu_R
    \end{pmatrix},
  \]
  and let
  \[
    \mathcal R_{\mathrm{rad}}:=\lambda_{\mathrm{PF}}(M^{-1}K).
  \]
  Then:
  \begin{enumerate}[label=(\roman*)]
    \item If $\mathcal R_{\mathrm{rad}}\le1$, the centrist-only equilibrium
      $E_0=(L,R,A)=(0,0,0)$ is globally asymptotically stable on $\mathcal T$.
    \item If $\mathcal R_{\mathrm{rad}}>1$, let
      $E_1=(L^*,R^*,0)$ be the unique radicalised equilibrium from
      Theorem~\ref{thm:PF4}(ii).  The radical-free axis
      \[
        \Gamma:=\{(L,R,A)\in\mathcal T:\;L=R=0\}
      \]
      is forward invariant, and every trajectory on $\Gamma$ converges to
      $E_0$.  For every initial condition in $\mathcal T\setminus\Gamma$,
      one has
      \[
        (L(t),R(t),A(t))\to E_1
        \qquad (t\to\infty).
      \]
  \end{enumerate}
\end{theorem}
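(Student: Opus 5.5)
The plan is to exploit the fact that the four-group system \eqref{eq:4group} with $\sigma\equiv0$ is \emph{asymptotically autonomous}. By Proposition~\ref{prop:Adecay4}, $A(t)\to0$ exponentially, so the $(L,R)$-subsystem is the baseline system \eqref{eq:model} perturbed by terms that are integrable in time. We therefore transfer Theorem~\ref{thm:global_asym} to $\mathcal T$ by two devices. Part~(i) uses a direct Lyapunov argument. Part~(ii) uses the Markus--Thieme theory of asymptotically autonomous systems, or equivalently a Butler--McGehee-type argument on $\omega$-limit sets.

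\textbf{Part (i).} Take $q\gg0$, the left Perron vector of the Metzler matrix $K-M$, with $s=s(K-M)\le0$. In $\mathcal T$, write $C=1-L-R-A$ and set $x=(L,R)^\top$. Then
\[
\dot x=(K-M)x+N(x)-A\,Kx+A\,\mathrm{diag}(\delta_L,\delta_R)x,
\]
where $N$ is the baseline quadratic remainder. The term $\delta_iAX_i$ may exceed $\alpha_iAX_i$, so $V=q^\top x$ alone need not decrease. I would instead use $W=q^\top x+cA$ with $c\ge\max_i q_i$. The key cancellation is
\[
\dot W = s\,q^\top x+q^\top N(x)-A\,q^\top Kx+A\sum_i(q_i-c)\delta_iX_i-c\rho A\le0 .
\]
Here the mobilisation terms $\delta_iAX_i$ are exactly the outflow of $A$, so choosing $c\ge q_i$ makes the mixed terms non-positive. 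Equality forces $A=0$, and then $x=0$ by the baseline strictness $N(x)\ne0$ for $x\ne0$. LaSalle's principle on the compact invariant set $\mathcal T$ (Proposition~\ref{prop:inv4}) then gives global asymptotic stability of $E_0$. A simpler alternative is also available: since $A\to0$ and $\omega$-limit sets are invariant, every $\omega$-limit set lies in the face $\{A=0\}$, where Theorem~\ref{thm:global_asym}(1) applies. Combined with local stability of $E_0$ (or, when $\mathcal R_{\mathrm{rad}}=1$, with the Lyapunov function above), this yields the claim.

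\textbf{Part (ii).} Forward invariance of $\Gamma$ is immediate because $\dot L=\dot R=0$ when $L=R=0$. On $\Gamma$ we have $\dot A=-\rho A$, so trajectories tend to $E_0$. Now take an initial point off $\Gamma$. The vector field pushes it into $\{L>0,R>0\}$ immediately, because $\dot L=\gamma_{RL}RC$ on $L=0$ and $C>0$ after an instant by the Remark following Proposition~\ref{prop:inv4}. Its $\omega$-limit set $\Omega$ is nonempty, compact, invariant and connected, and it lies in $\{A=0\}$ because $A\to0$. Invariance places $\Omega$ inside the baseline flow on $\Simp$. Such an invariant set is either $\{E_0\}$, or it contains a point of $\Simp\setminus\{E_0\}$. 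In the second case, by Theorem~\ref{thm:global_asym}(2) and closedness, $\Omega$ contains $E_1$. Since $E_1$ is locally asymptotically stable in the full system (Proposition~\ref{prop:inherit4} together with Theorem~\ref{thm:PF}(3)), once $\Omega\ni E_1$ the trajectory eventually enters the basin of $E_1$ and $\Omega=\{E_1\}$.

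\textbf{Main obstacle.} The crux is excluding $\Omega=\{E_0\}$, that is, showing that a trajectory with nonzero radical seed cannot converge to the centrist state. The direct attempt is the growth estimate $\dot V\ge\frac s2V$ near $E_0$, with $V=q^\top x$ and $s>0$. In the four-group system this estimate holds near $E_0$ with $A$ small, because $-A\,q^\top Kx$ is $O(\|x\|\,A)$ and $A\,\mathrm{diag}(\delta_i)x\ge0$. On a neighbourhood $\{\|x\|+A<\varepsilon\}$ we then get $\dot V\ge(s-O(\varepsilon))V>0$. A trajectory converging to $E_0$ would eventually remain in this neighbourhood with $V>0$ increasing, which contradicts $V\to0$. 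I expect the care to lie in making this uniform estimate precise, including the sign bookkeeping of the $A$-terms, and in handling initial data on the faces $L=0$ or $R=0$ via the instantaneous-entry argument.
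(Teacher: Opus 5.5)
Your proof is correct. Part~(i) is essentially the paper's argument: the paper also uses $W=q^\top x+\eta A$, with $\eta>\max\{0,(q^\top(D-K))_i/\delta_i\}$. Your choice $c\ge\max_i q_i$ is a simpler sufficient condition. It keeps $-A\,q^\top Kx$ as a separate nonpositive term and makes explicit that the mobilisation inflow to $L,R$ is exactly the outflow from $A$.

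Part~(ii) takes a genuinely different route.

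\emph{The paper's route.} It proves a uniform persistence bound. From $\dot V\ge[s_0-\kappa A(t)-(c_K/q_{\min})V]V$ and the exponential decay of $A$ along the trajectory, it shows $V(t)\ge\varepsilon=s_0q_{\min}/(4c_K)$ for all large $t$, with $\varepsilon$ independent of the initial datum. This places the $\omega$-limit set in the compact set $\Sigma_2^\varepsilon$, away from $E_0$. It then concludes by uniform attraction of $E_1$ on compact invariant subsets of $\Sigma_2\setminus\{E_0\}$. That step tacitly uses the stability of $E_1$.

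\emph{Your route.} You only rule out $\Omega=\{E_0\}$. You do this with the local estimate $\dot V\ge\bigl(s_0-c_K(L+R+A)\bigr)V$ on a fixed neighbourhood of $E_0$ in $(L,R,A)$-space. This needs no time-dependent control of $A$, because convergence to $E_0$ already makes $A$ small. Any other point of $\Omega$ then forces $E_1\in\Omega$, by Theorem~\ref{thm:global_asym} and closedness. Local asymptotic stability of $E_1$ in the full system (Proposition~\ref{prop:inherit4} with Theorem~\ref{thm:PF}) then gives $\Omega=\{E_1\}$.

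\emph{What each buys.} Your argument is shorter and states openly its reliance on local stability of $E_1$. The paper's argument yields a quantitative, initial-condition-independent lower bound on the eventual weighted radical mass.

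Two remarks on your write-up. The ``main obstacle'' you flag is already settled by your own estimate: no uniformity beyond a fixed neighbourhood is needed. The faces $L=0$ and $R=0$ enter only through $V(t)>0$, which follows from invariance of $\Gamma$ and uniqueness. The appeal to Markus--Thieme theory is likewise not needed; your argument is self-contained.
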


\begin{proof}
  Write $x=(L,R)^\top$ and
  \[
    D:=\begin{pmatrix}\delta_L&0\\0&\delta_R\end{pmatrix}.
  \]
  Using $C=1-L-R-A$, the $(L,R)$-subsystem can be written exactly as
  \[
    \dot x = \bigl[(1-A)K + AD - M\bigr]x - (L+R)Kx
           = (K-M)x + A(D-K)x - (L+R)Kx.
  \]

  \textbf{Part (i).}
  Let $q\gg0$ be a positive left Perron eigenvector of the irreducible Metzler
  matrix $K-M$, normalised so that
  \[
    q^\top(K-M)=s_0 q^\top,
    \qquad s_0:=s(K-M)\le0.
  \]
  Define
  \[
    V(x):=q^\top x.
  \]
  Choose
  \[
    \eta>\max\!\left\{
      0,\,
      \frac{(q^\top(D-K))_1}{\delta_L},\,
      \frac{(q^\top(D-K))_2}{\delta_R}
    \right\},
  \]
  and set
  \[
    W(L,R,A):=V(x)+\eta A.
  \]
  Differentiating along trajectories gives
  \begin{align*}
    \dot W
    &= s_0V + A\,q^\top(D-K)x - (L+R)\,q^\top Kx
       - \eta(\delta_L L+\delta_R R+\rho)A \\
    &= s_0V - (L+R)\,q^\top Kx - \eta\rho A \\
    &\qquad
       + A\Bigl(\bigl[(q^\top(D-K))_1-\eta\delta_L\bigr]L
       + \bigl[(q^\top(D-K))_2-\eta\delta_R\bigr]R\Bigr).
  \end{align*}
  By construction of $\eta$, the last line is nonpositive.  Since
  $s_0\le0$, $q\gg0$, and $K$ has strictly positive entries, we obtain
  \[
    \dot W<0
    \qquad\text{for every }(L,R,A)\neq(0,0,0).
  \]
  Thus $W$ is a strict Lyapunov function on the compact positively invariant
  set $\mathcal T$, and $E_0$ is globally asymptotically stable.

  \textbf{Part (ii).}
  On the axis $\Gamma$ one has $L=R=0$ and therefore
  \[
    \dot A=-\rho A,
  \]
  so $\Gamma$ is forward invariant and every trajectory on it converges to
  $E_0$.

  Now let the initial condition lie in $\mathcal T\setminus\Gamma$.  By
  uniqueness of solutions and invariance of $\Gamma$, one has
  $x(t)\neq0$ for all $t\ge0$, hence $V(t)>0$ for all $t\ge0$.
  Since $\mathcal R_{\mathrm{rad}}>1$, the spectral bound
  \[
    s_0:=s(K-M)>0.
  \]
  By Proposition~\ref{prop:Adecay4}, $A(t)\to0$ exponentially.

  Define
  \[
    \kappa:=\max_{i=1,2}\frac{|(q^\top(D-K))_i|}{q_i},
    \qquad
    c_K:=\max_{i=1,2}\frac{(q^\top K)_i}{q_i},
    \qquad
    q_{\min}:=\min\{q_1,q_2\}.
  \]
  Then
  \[
    q^\top(D-K)x \ge -\kappa\,q^\top x = -\kappa V,
    \qquad
    q^\top Kx \le c_K\,q^\top x = c_KV,
    \qquad
    L+R \le \frac{V}{q_{\min}}.
  \]
  Substituting into the exact identity for $\dot V$ yields
  \[
    \dot V
    \ge \left[s_0-\kappa A(t)-\frac{c_K}{q_{\min}}V\right]V.
  \]
  Choose $T>0$ such that $\kappa A(t)\le s_0/4$ for all $t\ge T$, and set
  \[
    \varepsilon:=\frac{s_0q_{\min}}{4c_K}>0.
  \]
  Then for all $t\ge T$ and all $0<V(t)\le\varepsilon$,
  \[
    \dot V(t)\ge \frac{s_0}{2}V(t)>0.
  \]
  It follows that there exists $T_1\ge T$ such that
  \[
    V(t)\ge\varepsilon
    \qquad\text{for all }t\ge T_1.
  \]
  Indeed, if $V(t)\le\varepsilon$ for all sufficiently large $t$, the above
  differential inequality would force exponential growth, while any crossing
  from $V=\varepsilon$ to $V<\varepsilon$ is impossible because $\dot V>0$ on
  the boundary $V=\varepsilon$ for $t\ge T$.

  Therefore every $\omega$-limit set of a trajectory starting in
  $\mathcal T\setminus\Gamma$ is contained in
  \[
    \Sigma_2^\varepsilon
    :=\{(L,R,0)\in\mathcal T:\;q^\top(L,R)^\top\ge\varepsilon\}
    \subset \Sigma_2\setminus\{E_0\},
  \]
  where
  \[
    \Sigma_2:=\{(L,R,0)\in\mathcal T\}
  \]
  is the baseline face. By Proposition~\ref{prop:Adecay4}, one has $A(t)\to0$ exponentially, so every
$\omega$-limit point satisfies $A=0$. Since the full post-shock system is
autonomous and its restriction to the invariant face $A=0$ coincides exactly
with the baseline flow \eqref{eq:model}, every $\omega$-limit set of a
trajectory in $\mathcal T\setminus\Gamma$ is a compact invariant subset of the
baseline face $\Sigma_2$. Because it is contained in
$\Sigma_2^\varepsilon\subset\Sigma_2\setminus\{E_0\}$, Theorem~\ref{thm:global_asym}
implies that this $\omega$-limit set must be $\{E_1\}$.

  By Theorem~\ref{thm:global_asym}, the equilibrium $E_1$ is globally
  asymptotically stable for the baseline flow on
  $\Sigma_2\setminus\{E_0\}$.  Since every $\omega$-limit set is a compact
  invariant subset of $\Sigma_2^\varepsilon$, the attraction to $E_1$ is
  uniform on that set by continuity and compactness. Hence, for every
  neighbourhood $U$ of $E_1$, there exists $T_U$ such that the baseline flow
  $\phi_t$ satisfies
  \[
    \phi_t(p)\in U
    \qquad\text{for all }p\in\omega(L_0,R_0,A_0),\; t\ge T_U.
  \]
  Because the $\omega$-limit set is invariant under $\phi_t$, we have
  \[
    \omega(L_0,R_0,A_0)=\phi_t\bigl(\omega(L_0,R_0,A_0)\bigr)\subset U
    \qquad (t\ge T_U).
  \]
  Since $U$ is arbitrary,
  \[
    \omega(L_0,R_0,A_0)=\{E_1\},
  \]
  and therefore $(L(t),R(t),A(t))\to E_1$.
\end{proof}

\begin{remark}[Relation to next-generation thresholds]\label{rem:ngm}
  The quantity
  \[
    \mathcal R_{\mathrm{rad}}
    := \lambda_{\mathrm{PF}}(M^{-1}K)
  \]
is the political analogue of a basic reproduction number.  In epidemiological 
  compartment models, thresholds of exactly this type arise from the 
  next-generation matrix framework of \cite{diekmann1990} and 
  \cite{vandendriessche2002}: the matrix $M^{-1}K$ plays the role of the 
  next-generation operator, $K$ collecting new-infection (here: recruitment and 
  reactive-polarisation) terms and $M$ collecting removal (here: deradicalisation) 
  terms.  Here the two radical wings play the role of ``infectious'' compartments. 
  Thus the threshold $\mathcal{R}_{\mathrm{rad}}=1$ has the same mathematical 
  status as $R_0=1$: below it, the centrist equilibrium is stable; above it, 
  radical persistence becomes possible.
\end{remark}

\begin{theorem}[Perron--Frobenius structural threshold in the full asymmetric
  four-group model]\label{thm:PF4}
  Consider the post-shock autonomous system \eqref{eq:4group} with
  $\sigma\equiv0$ and post-shock parameter vector $\theta^+$.  Define
  \[
    K^+ :=
    \begin{pmatrix}
      \alpha_L^+ & \gamma_{RL}^+\\
      \gamma_{LR}^+ & \alpha_R^+
    \end{pmatrix},
    \qquad
    M^+ :=
    \begin{pmatrix}
      \mu_L^+ & 0\\
      0 & \mu_R^+
    \end{pmatrix},
    \qquad
    B^+ := (M^+)^{-1}K^+,
  \]
  and let
  \[
    \mathcal R_{\mathrm{rad}}^+ := \lambda_{\mathrm{PF}}(B^+).
  \]
  Then:
  \begin{enumerate}[label=(\roman*)]
    \item The centrist-only equilibrium $E_0=(L,R,A)=(0,0,0)$ is locally
      asymptotically stable if and only if $\mathcal R_{\mathrm{rad}}^+<1$,
      nonhyperbolic at $\mathcal R_{\mathrm{rad}}^+=1$, and unstable if
      $\mathcal R_{\mathrm{rad}}^+>1$.
    \item A nontrivial equilibrium with $L^*>0$ and $R^*>0$ exists if and only if
      $\mathcal R_{\mathrm{rad}}^+>1$.  When it exists, it is unique and has the form
      \[
        (L^*,R^*,A^*)=(L^*,R^*,0),
        \qquad
        C^*=\frac{1}{\mathcal R_{\mathrm{rad}}^+},
      \]
      where $(L^*,R^*)$ is determined by the positive Perron eigenvector of $B^+$.
      Its local stability is inherited from the baseline equilibrium of
      Theorem~\ref{thm:PF} via Proposition~\ref{prop:inherit4}.
    \item In the symmetric reduction
      \[
        \alpha_L^+=\alpha_R^+=\alpha^+,\qquad
        \mu_L^+=\mu_R^+=\mu^+,\qquad
        \gamma_{LR}^+=\gamma_{RL}^+=\gamma^+,
      \]
      one has
      \[
        \mathcal R_{\mathrm{rad}}^+
        = \frac{\alpha^++\gamma^+}{\mu^+}
        = \frac{\beta^+}{\mu^+},
      \]
      so the full threshold $\mathcal R_{\mathrm{rad}}^+=1$ reduces exactly to
      the scalar transcritical threshold $\beta^+=\mu^+$.
  \end{enumerate}
\end{theorem}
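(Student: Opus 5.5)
The plan is to reduce everything to the baseline results via the block structure of the Jacobian and the equilibrium inheritance of Proposition~\ref{prop:inherit4}, applied with the post-shock parameters $\theta^+$.

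\textbf{Part (i).} Linearise \eqref{eq:4group} at $E_0=(0,0,0)$, where $C=1$. The $A$-row is $(0,0,-\rho)$, since every term in $\dot A$ other than $-\rho A$ is quadratic. The $(L,R)$ block equals $K^+-M^+$, exactly as in \eqref{eq:J0}. So the spectrum is $\sigma(K^+-M^+)\cup\{-\rho\}$. Because $\rho>0$, stability is decided by the spectral bound $s(K^+-M^+)$ of the irreducible Metzler matrix $K^+-M^+$.

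The key step is the standard sign equivalence
\[
  \operatorname{sign} s(K^+-M^+) = \operatorname{sign}\bigl(\lambda_{\mathrm{PF}}((M^+)^{-1}K^+)-1\bigr).
\]
I would prove it by direct $2\times2$ computation to stay self-contained. Write $a_i=\alpha_i^+/\mu_i^+$ and $g=\gamma_{RL}^+\gamma_{LR}^+/(\mu_L^+\mu_R^+)$. Then $\lambda_{\mathrm{PF}}<1$ iff $a_L+a_R<2$ and $(1-a_L)(1-a_R)>g$, by evaluating the characteristic polynomial at $1$ and locating the vertex. Multiplying through by $\mu_L^+\mu_R^+$, these are exactly the conditions \eqref{eq:E0_stab}. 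One detail needs checking: $(1-a_L)(1-a_R)>g>0$ with $a_L+a_R<2$ forces both $a_i<1$. This makes the trace condition follow.

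The equality case $\lambda_{\mathrm{PF}}=1$ makes $\det(K^+-M^+)=0$, which gives a zero eigenvalue, so $E_0$ is nonhyperbolic. The case $\lambda_{\mathrm{PF}}>1$ gives $s>0$, so $E_0$ is unstable. Alternatively, the equivalence can be cited as the next-generation theorem of \cite{vandendriessche2002}, in line with Remark~\ref{rem:ngm}.

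\textbf{Part (ii).} By Proposition~\ref{prop:inherit4}, every equilibrium has $A^*=0$, and on the face $A=0$ the system is the baseline model with parameters $\theta^+$. Theorem~\ref{thm:PF} then gives existence iff $\mathcal R^+_{\mathrm{rad}}>1$, uniqueness, $C^*=1/\mathcal R^+_{\mathrm{rad}}$, and $(L^*,R^*)$ proportional to the Perron vector $u$ of $B^+$.

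Two small points remain:
\begin{itemize}
  \item An equilibrium with $L^*,R^*>0$ automatically has $C^*>0$. If $C^*=0$, then $\dot L=-\mu_L L^*<0$, which is not an equilibrium.
  \item An equilibrium with only one of $L^*,R^*$ positive is impossible, because $\gamma_{ij}>0$ forces the other coordinate to be positive.
\end{itemize}
Local stability follows from the block-triangular Jacobian of Proposition~\ref{prop:inherit4}: the spectrum is that of $J_{\mathrm{base}}$, which is Hurwitz by Theorem~\ref{thm:PF}(3), together with $-(\delta_LL^*+\delta_RR^*+\rho)<0$.

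\textbf{Part (iii).} With symmetric parameters, $B^+=\frac1{\mu^+}\begin{psmallmatrix}\alpha^+&\gamma^+\\\gamma^+&\alpha^+\end{psmallmatrix}$. Its eigenvalues are $(\alpha^+\pm\gamma^+)/\mu^+$, with Perron vector $(1,1)^\top$. Hence $\mathcal R^+_{\mathrm{rad}}=\beta^+/\mu^+$, and the threshold $\mathcal R^+_{\mathrm{rad}}=1$ is exactly $\beta^+=\mu^+$, matching Theorem~\ref{thm:bif}.

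The only genuinely nontrivial step is the spectral-bound/Perron-root sign equivalence in (i), including the boundary case. I would handle it by the explicit quadratic evaluation described there rather than by general M-matrix theory.
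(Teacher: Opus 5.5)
Your proposal is correct and is essentially the paper's proof. The block-diagonal Jacobian at $E_0$ reduces (i) to the sign of $s(K^+-M^+)$; (ii) follows from Proposition~\ref{prop:inherit4} together with Theorem~\ref{thm:PF} applied to $\theta^+$; and (iii) is the explicit symmetric Perron root. Your $2\times2$ quadratic check of $\operatorname{sign} s(K^+-M^+)=\operatorname{sign}(\mathcal R^+_{\mathrm{rad}}-1)$, and your observation that $L^*,R^*>0$ forces $C^*>0$, make explicit what the paper only cites (the M-matrix criterion) or leaves implicit.

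As in the paper, the ``only if'' in (i) is really a statement about the linearisation. At $\mathcal R^+_{\mathrm{rad}}=1$ the zero eigenvalue makes linearisation inconclusive, and relative to $\mathcal T$, Theorem~\ref{thm:global4}(i) in fact gives asymptotic stability of $E_0$ there.
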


\begin{proof}
  At $E_0=(0,0,0)$ we have $C=1$, $A=0$, and the Jacobian of \eqref{eq:4group}
  with respect to $(L,R,A)$ is
  \[
    J_4(E_0)=
    \begin{pmatrix}
      \alpha_L^+-\mu_L^+ & \gamma_{RL}^+ & 0\\
      \gamma_{LR}^+ & \alpha_R^+-\mu_R^+ & 0\\
      0 & 0 & -\rho^+
    \end{pmatrix}
    =
    \begin{pmatrix}
      K^+-M^+ & 0\\
      0 & -\rho^+
    \end{pmatrix}.
  \]
  Hence local stability of $E_0$ is equivalent to the sign of the spectral
  bound of the Metzler matrix $K^+-M^+$.  By the standard M-matrix criterion,
  \[
    s(K^+-M^+)<0
    \iff
    \lambda_{\mathrm{PF}}\!\big((M^+)^{-1}K^+\big)<1.
  \]
Likewise,
  \begin{align*}
    s(K^+-M^+)=0 &\iff \lambda_{\mathrm{PF}}\!\big((M^+)^{-1}K^+\big)=1,\\[2pt]
    s(K^+-M^+)>0 &\iff \lambda_{\mathrm{PF}}\!\big((M^+)^{-1}K^+\big)>1.
  \end{align*}
  This proves part~(i).

  Part~(ii) follows from Proposition~\ref{prop:inherit4}: every equilibrium
  of the full four-group model has $A^*=0$, so equilibria are exactly the
  baseline equilibria for the post-shock parameter set $\theta^+$.  Applying
  Theorem~\ref{thm:PF} to that parameter set yields existence iff
  $\mathcal R_{\mathrm{rad}}^+>1$, uniqueness, the formula
  $C^*=1/\mathcal R_{\mathrm{rad}}^+$, and local stability.

  Part~(iii) is immediate, since in the symmetric case
  \[
    B^+ = \frac{1}{\mu^+}
    \begin{pmatrix}
      \alpha^+ & \gamma^+\\
      \gamma^+ & \alpha^+
    \end{pmatrix}
  \]
  has Perron root $(\alpha^++\gamma^+)/\mu^+ = \beta^+/\mu^+$.   
\end{proof}

\begin{remark}[Relation to the symmetric shock theorems]\label{rem:PF4bridge}
  Theorems~\ref{thm:critical}--\ref{thm:cumulative} are stated on the symmetric
  invariant manifold $L=R$, where the transient post-shock dynamics admit
  closed formulas.  Theorem~\ref{thm:PF4} shows that the scalar threshold
  $\beta=\mu$ used there is not ad hoc: it is precisely the symmetric special
  case of the full asymmetric Perron--Frobenius threshold
  $\mathcal R_{\mathrm{rad}}^+=1$.

  In particular, the location of the structural bifurcation is determined by
  the balance between recruitment/reactive-polarisation and deradicalisation
  parameters $(\alpha_i,\gamma_{ij},\mu_i)$, while the mobilisation and
  re-engagement parameters $(\delta_i,\rho)$ govern transient post-shock
  amplification and decay.
\end{remark}

\subsection{Stability analysis}

\begin{proposition}[Stability of equilibria in the symmetric model]\label{prop:stab4}
  For system \eqref{eq:2d} with $\sigma\equiv0$:
  \begin{enumerate}[label=(\roman*)]
    \item If $\beta\le\mu$: $E_0=(0,0)$ is \emph{globally asymptotically stable}
      on $\mathcal{F}$; $P(t)\to0$ and $A(t)\to0$ for all initial conditions.
    \item If $\beta>\mu$: the edge $\{P=0\}$ is forward-invariant.
      \begin{itemize}
        \item Every initial condition with $P_0=0$ satisfies $P(t)\equiv0$
          and $A(t)\to0$; the trajectory converges to $E_0$.
        \item Every initial condition with $P_0>0$ converges to
          $E_1=\bigl(\tfrac12(1-\mu/\beta),\,0\bigr)$.
      \end{itemize}
      In this regime $E_0$ is a saddle and $E_1$ is locally asymptotically stable.
  \end{enumerate}
\end{proposition}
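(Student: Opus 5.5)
The plan is to work directly with the planar system \eqref{eq:2d} on $\mathcal F$, treating it as the symmetric specialisation of Theorem~\ref{thm:global4}. Restricting to the invariant diagonal $L=R$ gives $V=q^\top x\propto P$ and, with $\beta=\alpha+\gamma$, a threshold $\mathcal R_{\mathrm{rad}}=\beta/\mu$ by Theorem~\ref{thm:PF4}(iii). Since the reduction is elementary, I would give a self-contained argument that mirrors that proof in one dimension. Forward invariance of $\mathcal F$ follows from Proposition~\ref{prop:inv4} restricted to the diagonal. The key explicit fact is Proposition~\ref{prop:Adecay4}: $A(t)=A_0\exp(-\int_0^t(2\delta P+\rho))\le A_0e^{-\rho t}$.

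\textbf{Case $\beta\le\mu$.} I would use the Lyapunov function $W=P+\eta A$ with $\eta>\max\{0,(\delta-\beta)/(2\delta)\}$, the one-dimensional version of the choice in Theorem~\ref{thm:global4}(i). A direct computation gives
\[
\dot W=(\beta-\mu)P-2\beta P^2+\bigl[(\delta-\beta)-2\eta\delta\bigr]PA-\eta\rho A .
\]
Each term is $\le0$, and the sum is strictly negative unless $P=A=0$ (the term $-\eta\rho A$ handles $P=0,A>0$, and $-2\beta P^2$ handles $P>0$). LaSalle on the compact invariant set $\mathcal F$ then gives global asymptotic stability of $E_0$. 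Alternatively, one can note that $A\to0$ exponentially, so $\dot P\le P[(\beta-\mu)+|\delta-\beta|A_0e^{-\rho t}-2\beta P]$, and compare with the asymptotically autonomous logistic equation.

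\textbf{Case $\beta>\mu$.} The edge $P=0$ is invariant because $\dot P$ carries the factor $P$. On it $\dot A=-\rho A$, so trajectories tend to $E_0$. For $P_0>0$, uniqueness gives $P(t)>0$ for all $t$. I would choose $T$ such that $|\delta-\beta|A(t)\le(\beta-\mu)/4$ for $t\ge T$. Then on $0<P\le\varepsilon:=(\beta-\mu)/(4\beta)$ we have $\dot P\ge\tfrac{\beta-\mu}{2}P>0$, so $P(t)\ge\varepsilon$ eventually. This is the same persistence step as in Theorem~\ref{thm:global4}(ii). The $\omega$-limit set therefore lies in $\{A=0,\ P\ge\varepsilon\}$ and is compact and invariant. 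On the face $A=0$ the flow is the logistic equation \eqref{eq:scalar}, whose only invariant compact subset bounded away from $0$ is $\{P^*\}$, so $\omega=\{E_1\}$. A more hands-on alternative also works: since $\int_T^\infty A\,dt<\infty$, write $\dot P=P[(\beta-\mu)-2\beta P]+g(t)P$ with $g$ integrable, and apply the asymptotically-autonomous (Markus/Thieme) argument.

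\textbf{Local classification.} The Jacobian of \eqref{eq:2d} at $E_0$ is $\operatorname{diag}(\beta-\mu,-\rho)$, which is a saddle when $\beta>\mu$. At $E_1$ it is upper triangular with diagonal entries $-(\beta-\mu)$ and $-(2\delta P^*+\rho)$, both negative, matching Proposition~\ref{prop:inherit4}.

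The main obstacle is the persistence step in the $\beta>\mu$ case. Convergence of $A$ alone does not prevent $P$ from drifting toward $0$ while the perturbation term $(\delta-\beta)A$ is still non-negligible. Closing this gap requires the time shift to $T$ together with the barrier argument at $P=\varepsilon$.
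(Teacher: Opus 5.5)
Your argument is correct, but for $P_0>0$ in part (ii) it takes a different route from the paper.

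Part (i) is essentially the paper's proof. The paper uses $V=P+A$, which is your $W$ with $\eta=1$. That choice is automatically admissible because $(\delta-\beta)/(2\delta)<\tfrac12$. It turns the cross term into $-(\beta+\delta)PA$, so no tuning of $\eta$ is needed.

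In part (ii) the paper never isolates a persistence step. It fixes $\eta\in(0,\beta-\mu)$ and $T_\eta$ with $|(\delta-\beta)A(t)|\le\eta$ for $t\ge T_\eta$. It then sandwiches $P$ between the logistic solutions $z_\pm$ of $\dot z_\pm=z_\pm[(\beta-\mu)\pm\eta-2\beta z_\pm]$ with $z_\pm(T_\eta)=P(T_\eta)$. This gives $\frac{\beta-\mu-\eta}{2\beta}\le\liminf P\le\limsup P\le\frac{\beta-\mu+\eta}{2\beta}$, and the proof finishes by letting $\eta\to0$. The lower comparison $z_-$ delivers persistence for free, so what you identify as the main obstacle is absorbed into a purely scalar comparison.

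Your barrier-plus-$\omega$-limit argument needs a little more machinery. It uses invariance of $\omega$-limit sets of the autonomous planar flow, and the fact that every backward logistic orbit on $\{A=0\}$ other than $P^*$ leaves $[\varepsilon,\tfrac12]$. In exchange, it is precisely the one-dimensional instance of the proof of Theorem~\ref{thm:global4}(ii) and carries over unchanged to the asymmetric model. The paper's sandwich, by contrast, relies on the explicit logistic structure of the scalar reduction.

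One small arithmetic slip: with $\varepsilon=(\beta-\mu)/(4\beta)$ and $|\delta-\beta|A\le(\beta-\mu)/4$ you get $\dot P\ge\frac{\beta-\mu}{4}P$, not $\frac{\beta-\mu}{2}P$. Taking $\varepsilon=(\beta-\mu)/(8\beta)$ gives the latter, but any positive rate suffices, so nothing breaks.
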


\begin{proof}
  \textbf{Case (i).}  Define the Lyapunov function $V(P,A):=P+A \ge 0$.  Using
  \eqref{eq:2d},
  \[
    \dot{V}
    = P\bigl[(\beta-\mu)-2\beta P+(\delta-\beta)A\bigr]-(2\delta P+\rho)A
    = (\beta-\mu)P - 2\beta P^2 - (\beta+\delta)PA - \rho A.
  \]
  When $\beta\le\mu$ every term is non-positive, so $\dot{V}\le0$, with
  $\dot{V}=0$ only at $(P,A)=(0,0)$.  Global asymptotic stability of $E_0$
  follows from LaSalle's invariance principle.

  \textbf{Case (ii).}  If $P_0=0$, then $\dot{P}\big|_{P=0}=0$, so $P(t)\equiv0$
  and the $\dot{A}$ equation reduces to $\dot{A}=-\rho A$, giving
  $A(t)=A_0e^{-\rho t}\to0$.

  Now suppose $P_0>0$.  Positivity of the $P$-equation implies $P(t)>0$
  for all $t\ge0$.  Since
  \[
    \dot A = -(2\delta P+\rho)A,
  \]
  we have $A(t)\to0$ exponentially.

  Fix any $\eta\in(0,\beta-\mu)$.  Because $A(t)\to0$, there exists
  $T_\eta>0$ such that
  \[
    |(\delta-\beta)A(t)|\le \eta
    \qquad\text{for all }t\ge T_\eta.
  \]
  Therefore, for all $t\ge T_\eta$,
  \[
    P\bigl[(\beta-\mu)-\eta-2\beta P\bigr]
    \;\le\;
    \dot P
    \;\le\;
    P\bigl[(\beta-\mu)+\eta-2\beta P\bigr].
  \]

  Let $z_-(t)$ and $z_+(t)$ solve the logistic comparison equations
  \[
    \dot z_\pm
    = z_\pm\bigl[(\beta-\mu)\pm\eta-2\beta z_\pm\bigr],
    \qquad
    z_\pm(T_\eta)=P(T_\eta).
  \]
  By scalar comparison,
  \[
    z_-(t)\le P(t)\le z_+(t)
    \qquad\text{for all }t\ge T_\eta.
  \]
  Since each logistic equation has a globally attracting positive equilibrium,
  \[
    z_\pm(t)\to \frac{\beta-\mu\pm\eta}{2\beta}
    \qquad (t\to\infty),
  \]
  we obtain
  \[
    \frac{\beta-\mu-\eta}{2\beta}
    \le
    \liminf_{t\to\infty}P(t)
    \le
    \limsup_{t\to\infty}P(t)
    \le
    \frac{\beta-\mu+\eta}{2\beta}.
  \]
  Letting $\eta\to0$ yields
  \[
    P(t)\to P^*=\frac12\left(1-\frac{\mu}{\beta}\right).
  \]
  Since also $A(t)\to0$, it follows that $(P(t),A(t))\to E_1$.

  The Jacobians confirm the type of each equilibrium:
  \[
    J(E_0)=\begin{pmatrix}\beta-\mu & 0\\ 0 & -\rho\end{pmatrix},\qquad
    J(E_1)=\begin{pmatrix}-(\beta-\mu) & P^*(\delta-\beta)\\0 & -(2\delta P^*+\rho)
    \end{pmatrix}.
  \]
  At $E_0$: eigenvalues $\beta-\mu>0$ and $-\rho<0$ (saddle).
  At $E_1$: both eigenvalues are real and negative, because $J(E_1)$ is
upper triangular. Hence $E_1$ is a stable node
(possibly degenerate/improper if the diagonal entries coincide).   
\end{proof}

\begin{remark}[The radical-seed requirement]\label{rem:seed}
  The invariance of $\{P=0\}$ means that the model cannot generate radicals
  from a completely zero starting point.  In the symmetric reduced system,
  if $P_0=0$ then $P(t)\equiv0$ regardless of the size of $A$ or any
  structural parameter shift $\Delta\beta$.  This is not a defect of the
  model but a reflection of political reality: a radical movement must have
  some pre-existing organisational core to exploit the disengaged pool.
  All results in Sections~\ref{sec:cumulative} and~\ref{sec:empirical}
  implicitly assume $P_0>0$; in Europe this assumption is almost universally
  satisfied.
\end{remark}

\subsection{The critical shock amplitude}

We now quantify the conditions under which an impulse shock produces a
radicalization surge.  Suppose the pre-shock state is near $E_0$: $P_0=\varepsilon\ll1$,
$A_0=\Delta$ (from the jump condition \eqref{eq:jump}).

\begin{theorem}[Critical shock amplitude]\label{thm:critical}
  Suppose $\beta<\mu$ (Regime~I: democracy is stable) and $\delta>\beta$
  (apolitical mobilisation more efficient than direct centrist recruitment).
  Immediately after the impulse shock, the radical share $P$ experiences
  an initial surge ($\dot{P}>0$) if and only if
  \begin{equation}\label{eq:Delta_c}
    \Delta > \Delta_c := \frac{\mu-\beta}{\delta-\beta}.
  \end{equation}
\end{theorem}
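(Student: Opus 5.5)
The plan is to evaluate the $P$-equation of the symmetric post-shock system \eqref{eq:2d} at the instant $t_0^+$ immediately after the jump \eqref{eq:jump}. I will then read off the sign of $\dot P$ from the bracketed growth factor. By Definition~\ref{def:impulse}, the jump leaves $P$ unchanged, so $P(t_0^+)=P_0=\varepsilon>0$. In the idealised setting fixed above the theorem (pre-shock state near $E_0$), the disengaged share becomes $A(t_0^+)=A_0=\Delta$. Since \eqref{eq:2d} gives
\[
  \dot P = P\bigl[(\beta-\mu)-2\beta P+(\delta-\beta)A\bigr],
\]
and $P>0$ at $t_0^+$, the sign of $\dot P(t_0^+)$ is the sign of $g(P,A):=(\beta-\mu)-2\beta P+(\delta-\beta)A$.

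The key step is to discard the $-2\beta P$ term, which is $O(\varepsilon)$, because the statement concerns the leading-order threshold with $P_0=\varepsilon\ll1$. What remains is the linear condition $(\beta-\mu)+(\delta-\beta)\Delta>0$. The hypothesis $\delta>\beta$ makes the coefficient $\delta-\beta$ strictly positive, so dividing preserves the inequality and gives $\Delta>(\mu-\beta)/(\delta-\beta)=\Delta_c$. The converse holds by reversing each step. The hypothesis $\beta<\mu$ ensures $\Delta_c>0$, so a genuine positive threshold exists. I will also note that $\Delta_c<1$ if and only if $\mu<\delta$, which tells us when the threshold is attainable by an admissible shock with $\Delta\in(0,1)$.

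The main obstacle is rigour about what ``immediately after'' and ``near $E_0$'' mean. Two issues arise.
\begin{itemize}
  \item If the pre-shock $A(t_0^-)$ is not exactly zero, then $A_0=A(t_0^-)+\Delta C(t_0^-)$.
  \item The $-2\beta\varepsilon$ term shifts the exact threshold.
\end{itemize}
I would handle both by stating the exact condition first. That condition is $\dot P(t_0^+)>0$ if and only if $A_0>(\mu-\beta+2\beta\varepsilon)/(\delta-\beta)$. Then, in the regime $A(t_0^-)=0$ and $C(t_0^-)=1-2\varepsilon$, the condition becomes $\Delta(1-2\varepsilon)>\Delta_c+O(\varepsilon)$. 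This reduces to \eqref{eq:Delta_c} in the limit $\varepsilon\to0$, or equivalently to leading order in $\varepsilon$, matching the convention $A_0=\Delta$ adopted just before the theorem.
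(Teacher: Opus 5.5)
Your proposal is correct and follows the paper's proof: evaluate $\dot P/P$ from \eqref{eq:2d} at $(P,A)=(\varepsilon,\Delta)$, let $\varepsilon\to0$, and divide by $\delta-\beta>0$ to get $\Delta>\Delta_c$. Your extra bookkeeping of the exact threshold $A_0>(\mu-\beta+2\beta\varepsilon)/(\delta-\beta)$ and of $A_0=\Delta(1-2\varepsilon)$ makes explicit what the paper records separately in Remark~\ref{rem:Ac}, and rightly frames the ``if and only if'' as a leading-order statement in $\varepsilon$.
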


\begin{proof}
  Evaluating $\dot{P}/P$ from \eqref{eq:2d} at $(P,A)=(\varepsilon,\Delta)$
  and taking $\varepsilon\to0$:
  \[
    \frac{\dot{P}}{P}\bigg|_{\varepsilon\to0}
    = (\beta-\mu) + (\delta-\beta)\Delta.
  \]
  This is positive iff $(\delta-\beta)\Delta > \mu-\beta$,
  i.e.\ $\Delta > (\mu-\beta)/(\delta-\beta) = \Delta_c$.   
\end{proof}

\begin{remark}[Exact instantaneous growth criterion]\label{rem:Ac}
  For a general post-shock symmetric state $(P_0,A_0)$ with $\delta>\beta$,
  the exact condition for an initial surge is
  \[
    \dot{P}(0)>0
    \iff
    A_0 > A_c(P_0) := \frac{\mu-\beta+2\beta P_0}{\delta-\beta}.
  \]
  Theorem~\ref{thm:critical} is the near-centrist approximation $P_0=\varepsilon\ll1$,
  giving $A_c\approx\Delta_c=(\mu-\beta)/(\delta-\beta)$.  For a system
  already substantially radicalised ($P_0$ not small), the effective threshold
  $A_c(P_0)>\Delta_c$ is higher: pre-existing radical activity partially
  consumes the disengaged pool before it can fuel further growth.
\end{remark}

\begin{corollary}[Properties of $\Delta_c$]\label{cor:Dc}
  The critical threshold $\Delta_c = (\mu-\beta)/(\delta-\beta)$ satisfies:
  \begin{itemize}
    \item $\Delta_c > 0$ whenever $\mu > \beta$ and $\delta > \beta$.
    \item $\Delta_c < 1$ if and only if $\delta > \mu$.  When $\beta < \delta \le \mu$,
      we have $\Delta_c \ge 1$, meaning no feasible shock ($\Delta\le1$) can
      trigger a surge---the system is shock-proof regardless of crisis magnitude.
      When $\delta > \mu > \beta$, the threshold lies in $(0,1)$.
    \item \emph{Increasing in $\mu$}: stronger deradicalisation raises the
      resilience threshold.
    \item \emph{Dependence on $\beta$}: the sign of
      $\partial\Delta_c/\partial\beta = (\mu-\delta)/(\delta-\beta)^2$
      depends on $\delta$ vs.\ $\mu$.  If $\delta>\mu$: $\Delta_c$ is
      \emph{decreasing} in $\beta$ (pre-existing polarisation lowers the
      threshold).  If $\delta<\mu$: the system is shock-proof for all
      $\Delta\le1$ in this regime anyway.
    \item \emph{Decreasing in $\delta$} (for $\delta>\mu$): higher radical
      mobilisation efficiency lowers the threshold.
    \item $\Delta_c\to0$ as $\beta\to\mu^-$ (when $\delta>\mu$): near the
      bifurcation point, arbitrarily small shocks suffice.
  \end{itemize}
\end{corollary}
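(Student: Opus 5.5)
The corollary is elementary calculus on the explicit formula $\Delta_c=(\mu-\beta)/(\delta-\beta)$ from Theorem~\ref{thm:critical}. Throughout, I will work under the standing hypotheses $\mu>\beta$ and $\delta>\beta$, so that the numerator and the denominator are both strictly positive, and I will check each bullet in order.

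The first two bullets are sign arguments.
\begin{itemize}
\item \emph{Positivity.} A quotient of two positive numbers is positive, so $\Delta_c>0$.
\item \emph{Comparison with $1$.} Since the denominator is positive, $\Delta_c<1$ is equivalent to $\mu-\beta<\delta-\beta$, that is, to $\delta>\mu$. By the same equivalence, $\beta<\delta\le\mu$ gives $\Delta_c\ge1$.
\end{itemize}
For the ``shock-proof'' interpretation I will use the jump condition \eqref{eq:jump}. It gives $\Delta=1-e^{-s}<1$, and in any case $\Delta\le1$. Hence $\Delta>\Delta_c$ is impossible, and by Theorem~\ref{thm:critical} no initial surge can occur.

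The monotonicity bullets follow from partial derivatives.
\begin{itemize}
\item \emph{In $\mu$:} $\partial\Delta_c/\partial\mu=1/(\delta-\beta)>0$.
\item \emph{In $\beta$:} the quotient rule gives
\[
\frac{\partial\Delta_c}{\partial\beta}=\frac{-(\delta-\beta)+(\mu-\beta)}{(\delta-\beta)^2}=\frac{\mu-\delta}{(\delta-\beta)^2},
\]
which is negative exactly when $\delta>\mu$. The case $\delta<\mu$ is already covered by the shock-proof bullet.
\item \emph{In $\delta$:} $\partial\Delta_c/\partial\delta=-(\mu-\beta)/(\delta-\beta)^2<0$. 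This holds whenever $\mu>\beta$, and in particular in the regime $\delta>\mu$ named in the statement.
\end{itemize}

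Finally, for the limit I fix $\delta>\mu$. As $\beta\to\mu^-$, the numerator $\mu-\beta$ tends to $0$. The denominator $\delta-\beta$ tends to $\delta-\mu>0$, so it stays bounded away from zero. Therefore $\Delta_c\to0$.

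There is no real obstacle here. The only point needing care is to state explicitly which sign hypotheses are in force for each bullet, in particular that $\delta>\beta$ keeps the denominator positive throughout.
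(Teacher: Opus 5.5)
Your proposal is correct and takes the same route as the paper, whose proof says only that all claims follow by direct differentiation of \eqref{eq:Delta_c} and sign analysis, and singles out the same equivalence $\Delta_c<1 \iff \mu-\beta<\delta-\beta \iff \mu<\delta$ that you use. Your version writes out each partial derivative and the limit explicitly, and it correctly notes that $\Delta_c$ is decreasing in $\delta$ whenever $\mu>\beta$, not only in the regime $\delta>\mu$ named in the statement.
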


\begin{proof}
  All claims follow by direct differentiation of \eqref{eq:Delta_c} and
  sign analysis.  The condition $\Delta_c<1$ is equivalent to $\mu-\beta<\delta-\beta$,
  i.e.\ $\mu<\delta$.   
\end{proof}

The proximity to bifurcation in Corollary~\ref{cor:Dc} is the mathematical
expression of a well-known political-science observation: societies that are
``already polarised'' are disproportionately sensitive to external shocks.
The baseline model predicts a smooth, proportional response; the four-group
model predicts threshold behaviour with an explicit formula.

\subsection{The radicalization window theorem}

\begin{theorem}[Radicalization window upper bound]\label{thm:window}
  Under the conditions of Theorem~\ref{thm:critical} with $\Delta>\Delta_c$
  and $P_0=\varepsilon\ll1$, consider the early phase in which $P(t)$ remains
  small.  In this regime $\dot{A}\approx-\rho A$, giving $A(t)\lesssim\Delta e^{-\rho t}$
  (an upper bound, since the full decay rate $2\delta P+\rho\ge\rho$ exceeds $\rho$
  once $P>0$).

  The instantaneous radical growth rate $\dot{P}/P$, approximated by replacing
  $A(t)$ with its upper bound $\Delta e^{-\rho t}$, changes sign at the
  \emph{estimated window duration}
  \begin{equation}\label{eq:tstar}
    t^* = \frac{1}{\rho}\ln\!\left(\frac{\Delta}{\Delta_c}\right)
        = \frac{1}{\rho}\ln\!\left(
            \frac{(\delta-\beta)\,\Delta}{\mu-\beta}\right).
  \end{equation}
  The actual surge duration $\hat{t}$ satisfies $\hat{t} \le t^*$:
  replacing $A(t)$ by its upper bound overestimates the available mobilisation
  fuel.  For $P_0\approx0$, the bound $t^*$ is asymptotically tight.
\end{theorem}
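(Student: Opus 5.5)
The argument works directly with the per-capita growth rate of the symmetric post-shock system \eqref{eq:2d}. Write
\[
  g(t):=\frac{\dot P}{P}=(\beta-\mu)-2\beta P(t)+(\delta-\beta)A(t).
\]
The first step is the comparison for $A$. Proposition~\ref{prop:Adecay4}, specialised to the symmetric case, gives the exact formula
\[
  A(t)=\Delta\exp\!\Bigl(-\int_0^t(2\delta P(s)+\rho)\,ds\Bigr)\le \Delta e^{-\rho t},
\]
because $P\ge0$ by forward invariance (Proposition~\ref{prop:inv4}). This turns the heuristic ``$A(t)\lesssim\Delta e^{-\rho t}$'' in the statement into a rigorous inequality for every $t\ge0$, not only in the early phase.

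Next, define the surrogate rate
\[
  \bar g(t):=(\beta-\mu)+(\delta-\beta)\Delta e^{-\rho t}.
\]
It is obtained by dropping the nonpositive term $-2\beta P$ and replacing $A$ by its upper bound. Since $\delta>\beta$, we have $g(t)\le\bar g(t)$ for all $t$. The function $\bar g$ is strictly decreasing. It satisfies $\bar g(0)=(\delta-\beta)(\Delta-\Delta_c)>0$ because $\Delta>\Delta_c$, and $\bar g(t)\to\beta-\mu<0$. It therefore has a unique zero. Solving $(\delta-\beta)\Delta e^{-\rho t}=\mu-\beta$ gives exactly \eqref{eq:tstar}, namely $t^*=\rho^{-1}\ln(\Delta/\Delta_c)$, and the second form follows from substituting $\Delta_c$ from \eqref{eq:Delta_c}.

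For the bound $\hat t\le t^*$, I will define the surge duration as $\hat t:=\sup\{t: g(s)>0 \text{ for all } s\in[0,t)\}$, that is, the first time $\dot P$ changes sign. For $t>t^*$ we have $g(t)\le\bar g(t)<0$, so $g$ must already be nonpositive beyond $t^*$, and hence $\hat t\le t^*$. The definition of $\hat t$ is the delicate point. The estimate needs $g$ to stay negative after $t^*$, and it does, because $\bar g$ is monotone and negative there. So even a definition of $\hat t$ as the last time with $\dot P>0$ satisfies $\hat t\le t^*$. I will state that version, since it is the stronger one.

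The main obstacle is the tightness claim as $P_0=\varepsilon\to0$. I will argue this by continuous dependence on initial data over the compact interval $[0,t^*+1]$. As $\varepsilon\to0$, the solution converges uniformly on this interval to the solution on the invariant edge $\{P=0\}$, where $P\equiv0$ and $A=\Delta e^{-\rho t}$. Therefore $g\to\bar g$ uniformly on $[0,t^*+1]$. Because $\dot{\bar g}(t^*)=-\rho(\mu-\beta)<0$, the zero of $\bar g$ is transversal. A standard implicit-function or intermediate-value argument then shows that the zero of $g$ lies within $o(1)$ of $t^*$, so $\hat t\to t^*$.

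To make this quantitative I would bound $P(t)\le\varepsilon e^{\bar g(0)t}$ on the window, which follows from $g\le\bar g\le\bar g(0)$. This gives $|g-\bar g|=O(\varepsilon)$ uniformly on the window, and hence $t^*-\hat t=O(\varepsilon)$.
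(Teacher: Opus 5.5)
Your proposal is correct and follows the paper's proof. It uses the same comparison $A(t)\le\Delta e^{-\rho t}$ from $2\delta P+\rho\ge\rho$, the same majorant $(\beta-\mu)+(\delta-\beta)\Delta e^{-\rho t}$ obtained by dropping $-2\beta P$, and the same sign-change conclusion $\hat t\le t^*$. Where the paper only appeals heuristically to $P\approx0$ for asymptotic tightness, your continuous-dependence argument on $[0,t^*+1]$ with the transversal zero $\dot{\bar g}(t^*)=-\rho(\mu-\beta)<0$ makes that claim rigorous. It even gives $t^*-\hat t=O(\varepsilon)$, once you also note $\Delta e^{-\rho t}-A(t)\le 2\delta\Delta\int_0^t P\,ds=O(\varepsilon)$ from the exact formula for $A$.
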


\begin{proof}
  Since $2\delta P(t)+\rho \ge \rho$ for all $P\ge0$, we have
  \[
    A(t)\le \Delta e^{-\rho t}.
  \]
  The exact instantaneous growth rate is
  \[
    \frac{\dot P}{P}
    = (\beta-\mu)-2\beta P + (\delta-\beta)A(t).
  \]
  Hence
  \[
    \frac{\dot P}{P}
    \le (\beta-\mu)+(\delta-\beta)A(t)
    \le (\beta-\mu)+(\delta-\beta)\Delta e^{-\rho t}.
  \]
  The right-hand side changes sign at the stated value of $t^*$.
  Therefore the true surge duration $\hat t$ satisfies $\hat t\le t^*$.

  For $P_0=\varepsilon\ll1$, the early-phase approximation $P(t)\approx0$
  makes the correction term $-2\beta P$ negligible, while
  $A(t)\approx\Delta e^{-\rho t}$.  In that regime $t^*$ is asymptotically
  tight as an estimate of the true sign-change time.   
\end{proof}

\begin{corollary}[Policy levers on window duration]\label{cor:levers}
  The window bound $t^*$ in \eqref{eq:tstar} depends on model parameters
  as follows.  Writing $t^* = \rho^{-1}\ln(\Delta/\Delta_c)$:
  \begin{enumerate}[label=(\roman*)]
    \item \textbf{Re-engagement rate $\rho$:} enters linearly as $1/\rho$;
      doubling $\rho$ halves $t^*$.  This is the most direct institutional
      lever: investment in civic re-engagement shortens the window proportionally.
    \item \textbf{Shock ratio $\Delta/\Delta_c$:} enters logarithmically;
      reducing $\Delta$ or increasing $\Delta_c$ has diminishing returns.
      
    \item \textbf{Polarisation parameter $\beta$:} enters through $\Delta_c$
      via the ratio $(\delta-\beta)/(\mu-\beta)$.  When $\delta>\mu$, $\beta\to\mu$
      makes $\Delta_c\to0$ and $t^*\to\infty$: a society near the bifurcation
      point has an arbitrarily long radicalization window.  Thus reducing $\beta$
      both raises $\Delta_c$ and shortens $t^*$, making it a lever that acts on
      both the threshold and the window duration simultaneously.
  \end{enumerate}
\end{corollary}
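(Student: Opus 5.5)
The plan is to write $t^*=\rho^{-1}\ln(\Delta/\Delta_c)$ with $\Delta_c=(\mu-\beta)/(\delta-\beta)$ from \eqref{eq:Delta_c}, and to read off each dependence by elementary differentiation. We stay under the standing hypotheses of Theorem~\ref{thm:critical} and Theorem~\ref{thm:window}: $\beta<\mu$, $\delta>\beta$ and $\Delta>\Delta_c$, so that $t^*>0$ and the logarithm is well defined. The corollary is a set of comparative-statics statements about this explicit formula, so no new dynamical argument is needed.

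\textbf{Item (i).} Note that $\Delta_c$ does not involve $\rho$. The logarithmic factor is therefore $\rho$-independent, and $t^*\propto1/\rho$ exactly. Replacing $\rho$ by $2\rho$ halves $t^*$, with the shock and the remaining parameters held fixed.

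\textbf{Item (ii).} Set $x:=\Delta/\Delta_c>1$, so that $t^*=\rho^{-1}\ln x$. Then $\partial t^*/\partial x=1/(\rho x)>0$, and this derivative is decreasing in $x$, since $\ln$ is concave. Reducing $\Delta$, or raising $\Delta_c$ (for instance through a larger $\mu$, by Corollary~\ref{cor:Dc}), lowers $t^*$. The marginal effect per unit change of the ratio is $1/(\rho x)$, which is the stated diminishing return.

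\textbf{Item (iii).} We use the second form in \eqref{eq:tstar}, namely $t^*=\rho^{-1}\ln\bigl((\delta-\beta)\Delta/(\mu-\beta)\bigr)$, and differentiate in $\beta$:
\[
\frac{\partial t^*}{\partial\beta}=\frac1\rho\Bigl(\frac{1}{\mu-\beta}-\frac{1}{\delta-\beta}\Bigr)=\frac{\delta-\mu}{\rho(\mu-\beta)(\delta-\beta)}.
\]
This is positive when $\delta>\mu$. Hence lowering $\beta$ shortens $t^*$. By Corollary~\ref{cor:Dc}, $\partial\Delta_c/\partial\beta=(\mu-\delta)/(\delta-\beta)^2<0$ in the same regime, so lowering $\beta$ also raises $\Delta_c$. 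This gives the claimed double action of $\beta$. For the limit, fix $\Delta\in(0,1]$ and $\delta>\mu$. As $\beta\to\mu^-$ we have $\Delta_c\to0$ (again by Corollary~\ref{cor:Dc}), so $\Delta>\Delta_c$ eventually holds, and $\ln(\Delta/\Delta_c)\to\infty$, giving $t^*\to\infty$.

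\textbf{Caveat.} The only delicate point is interpretive rather than technical. All statements concern the upper bound $t^*$ of Theorem~\ref{thm:window}, not the true surge duration $\hat t$. The claims should be phrased accordingly, noting that they transfer to $\hat t$ only in the asymptotically tight regime $P_0\approx0$. In addition, in item (iii) the admissibility condition $\Delta>\Delta_c$ must be checked to persist along the limit, which it does since $\Delta_c$ decreases toward $0$.
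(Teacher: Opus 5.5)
Your approach is the one the paper intends. The corollary is stated without a separate proof, as a direct reading of \eqref{eq:tstar} together with Corollary~\ref{cor:Dc}. Your items (i) and (iii) are correct, including $\partial t^*/\partial\beta=(\delta-\mu)/\bigl(\rho(\mu-\beta)(\delta-\beta)\bigr)$ and the limit $\beta\to\mu^-$. You can also drop the case distinction in (iii). Since $\Delta=1-e^{-s}<1$, the standing hypothesis $\Delta>\Delta_c$ forces $\Delta_c<1$, i.e.\ $\delta>\mu$, so $\partial t^*/\partial\beta>0$ whenever $t^*$ is defined.

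The step that fails is in item (ii). You note that $\partial t^*/\partial x=1/(\rho x)$ decreases in $x$ and call this the diminishing return. But the policy moves $x=\Delta/\Delta_c$ \emph{downward}, where $1/(\rho x)$ grows. Concavity of $\ln$ therefore gives diminishing returns to \emph{increasing} the ratio, not to reducing it. To get a correct statement, differentiate in each lever separately:
\begin{itemize}
\item In $\Delta_c$, $t^*=\rho^{-1}(\ln\Delta-\ln\Delta_c)$ is decreasing and convex, since $\partial^2t^*/\partial\Delta_c^2=1/(\rho\Delta_c^2)>0$. Successive equal increases of $\Delta_c$ buy successively smaller reductions of $t^*$, which is genuine diminishing returns.
\item In $\Delta$, $t^*$ is increasing and concave. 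A cut from $\Delta$ to $\Delta-h$ saves $\rho^{-1}\ln\bigl(\Delta/(\Delta-h)\bigr)$, which \emph{grows} as $\Delta$ decreases, so successive equal cuts in $\Delta$ have increasing returns.
\end{itemize}
For $\Delta$, the claim survives only in a weaker sense. Either the sensitivity $1/(\rho\Delta)$ is small when $\Delta\gg\Delta_c$, or each proportional cut (e.g.\ halving $\Delta$) buys the same fixed amount $\rho^{-1}\ln 2$. Your write-up should say which meaning of ``diminishing returns'' is being proved. Under the standard marginal meaning, restrict the assertion to $\Delta_c$ and note that it fails for $\Delta$.
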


\section{The Bifurcation Trigger and Cumulative Shocks}\label{sec:cumulative}

\subsection{The bifurcation trigger theorem}

\begin{theorem}[Bifurcation trigger]\label{thm:trigger}
  Consider a system initially in Regime~I with parameter $\beta_0<\mu$ and
  with a nonzero radical seed $P(t_0^-)>0$ (equivalently, $L(t_0^-)+R(t_0^-)>0$
  in the full model).  A structural shock of amplitude $(\Delta,\Delta\beta)$ at
  time $t_0$ causes a \emph{permanent shift of the long-run equilibrium} if and
  only if
  \begin{equation}\label{eq:trigger}
    \beta_0 + \Delta\beta > \mu.
  \end{equation}
  When \eqref{eq:trigger} holds, the post-shock long-run centrist share is
  \begin{equation}\label{eq:Cinf}
    C^\infty = \frac{\mu}{\beta_0+\Delta\beta} < 1,
  \end{equation}
  and this value is independent of the state component $\Delta$ of the shock.

  If $P(t_0^-)=0$, then $P(t)\equiv0$ for all $t>t_0$ regardless of
  $\Delta\beta$, and the system converges to $E_0$ (full centrist recovery).
\end{theorem}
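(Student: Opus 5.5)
The plan is to reduce the statement to the post-shock global classification already proved, applied with the post-shock parameter vector. After the shock at $t_0$, the system evolves autonomously with $\sigma\equiv0$ and parameters $\theta^+$, where $\beta^+=\beta_0+\Delta\beta$ and all other parameters are unchanged. The initial state is the jumped state $(P(t_0^-),\,(1-\Delta)C(t_0^-)$-consistent $A(t_0^+))$ given by the jump condition \eqref{eq:jump}. Since $L$ and $R$ are unchanged by the jump, the radical seed survives: $P(t_0^+)=P(t_0^-)$. By Proposition~\ref{prop:inv4}, the post-shock state lies in $\mathcal F$.

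\textbf{Long-run state before the shock.} The pre-shock long-run state is $E_0$, with $C=1$, by Proposition~\ref{prop:stab4}(i), since $\beta_0<\mu$. The meaning of a \emph{permanent shift of the long-run equilibrium} is therefore that the post-shock $\omega$-limit differs from $E_0$.

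\textbf{Case $\beta_0+\Delta\beta\le\mu$.} I apply Proposition~\ref{prop:stab4}(i) with $\beta=\beta^+$. Every post-shock trajectory converges to $E_0$, so the long-run equilibrium is unchanged. The only effect of the shock is a transient, which may be a surge governed by Theorem~\ref{thm:critical}.

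\textbf{Case $\beta_0+\Delta\beta>\mu$.} Since $P(t_0^+)>0$, Proposition~\ref{prop:stab4}(ii) gives convergence to $E_1=\bigl(\tfrac12(1-\mu/\beta^+),0\bigr)$. Hence $C^\infty=1-2P^*=\mu/\beta^+<1$, which differs from $E_0$. This gives the "if and only if" together with \eqref{eq:Cinf}.

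\textbf{Independence from $\Delta$.} $\Delta$ enters only the initial condition $A(t_0^+)$, and the attractor of Proposition~\ref{prop:stab4}(ii) is the same for every initial condition with $P>0$. So $C^\infty$ cannot depend on $\Delta$.

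\textbf{Full asymmetric model.} The same argument runs with Theorem~\ref{thm:global4}: $\Gamma$ is avoided because $L+R>0$ is preserved, and Theorem~\ref{thm:PF4}(iii) identifies the threshold $\mathcal R_{\mathrm{rad}}^+=\beta^+/\mu$.

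\textbf{Case $P(t_0^-)=0$.} The edge $\{P=0\}$ is invariant by Proposition~\ref{prop:stab4}(ii), or equivalently $\Gamma$ is invariant by Theorem~\ref{thm:global4}(ii). So $P\equiv0$ and $\dot A=-\rho A$, giving convergence to $E_0$ regardless of $\beta^+$.

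\textbf{Main obstacle.} The delicate point is not analytic but definitional. "Permanent shift" must be read as a change of the $\omega$-limit, not of the transient. I also need to check that the post-shock state is admissible, namely that the jump keeps it in $\mathcal F$ and preserves $P>0$, so that the dichotomy in Proposition~\ref{prop:stab4} applies without exception. The borderline case $\beta^+=\mu$ needs care: there $E_1$ coincides with $E_0$, and part (i), with its non-strict inequality, covers it, so no shift occurs. This is consistent with the strict inequality in \eqref{eq:trigger}.
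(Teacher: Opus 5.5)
Your proposal is correct and follows the paper's proof: you note that the impulse leaves $P$ unchanged, then apply the post-shock dichotomy of Proposition~\ref{prop:stab4} with $\beta'=\beta_0+\Delta\beta$, which gives the ``if and only if'', the formula $C^\infty=\mu/\beta'$, and independence from $\Delta$. Two of your choices are more precise than the paper without changing the argument: you cite part~(i) for the case $\beta'\le\mu$, where the paper's proof loosely cites~(ii), and you use Theorem~\ref{thm:global4} for the full-model reading $L+R>0$.
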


\begin{proof}
  The case $P(t_0^-)=0$ is handled by Proposition~\ref{prop:stab4}(ii):
  $P\equiv0$ persists and the system returns to $E_0$.

  For $P(t_0^-)>0$: the state shock preserves $P(t_0^+)=P(t_0^-)>0$ (only
  $C$ and $A$ are affected by the impulse).  Post-shock, the system evolves
  under parameters $(\beta',\mu)$ with $\beta'=\beta_0+\Delta\beta$ from a
  state with $P(t_0^+)>0$.  By Proposition~\ref{prop:stab4}(ii), the
  long-run attractor is $E_0$ if $\beta'\le\mu$ and $E_1$ with
  $C^*=\mu/\beta'$ if $\beta'>\mu$.  Condition \eqref{eq:trigger} is exactly
  $\beta'>\mu$.  The formula \eqref{eq:Cinf} follows from Corollary~\ref{cor:Cstar}
  with $\beta$ replaced by $\beta'$.  Since $C^\infty$ depends only on $\beta'$
  and $\mu$, it is independent of $\Delta$.   
\end{proof}

\begin{remark}[The fundamental decomposition]
  Theorem~\ref{thm:trigger} provides a clean decomposition of any shock
  into two independently consequential components:
  \begin{description}
    \item[State component $\Delta$:] determines whether a surge occurs
      (Theorem~\ref{thm:critical}) and how long it lasts
      (Theorem~\ref{thm:window}).  Does \emph{not} determine the long-run
      equilibrium.  This is the object of crisis management.
    \item[Structural component $\Delta\beta$:] determines whether the
      long-run equilibrium permanently shifts.  The magnitude of $\Delta$
      is irrelevant to this determination.  This is the object of structural
      resilience.
  \end{description}
  A society can successfully manage a crisis (limit $\Delta$) while failing
  to maintain structural resilience (allowing $\Delta\beta>0$), and will
  still end up at a permanently more radicalised equilibrium.
\end{remark}

\begin{corollary}[Irreversibility of structural shifts]\label{cor:irreversible}
  Under the conditions of Theorem~\ref{thm:trigger} with $\beta_0+\Delta\beta>\mu$,
  return to the pre-shock centrist share $C^*_{\mathrm{pre}}=1$ (or to any
  centrist share higher than $C^\infty=\mu/(\beta_0+\Delta\beta)$) is impossible
  under the autonomous dynamics.  Recovery requires a structural counter-shock:
  a sustained reduction in $\alpha+\gamma$ that reverses the parameter shift.
\end{corollary}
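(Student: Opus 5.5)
The plan is to read the corollary off Theorem~\ref{thm:trigger} and Proposition~\ref{prop:stab4}(ii), applied to the post-shock system with parameter $\beta'=\beta_0+\Delta\beta>\mu$. By Theorem~\ref{thm:trigger}, the nonzero radical seed survives the shock: $P(t_0^+)=P(t_0^-)>0$. Proposition~\ref{prop:stab4}(ii) then gives $(P(t),A(t))\to E_1=(P^*,0)$ with $P^*=\tfrac12(1-\mu/\beta')>0$. Since $C=1-2P-A$, this yields $C(t)\to C^\infty=\mu/\beta'<1$.

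The first claim is that return to a higher centrist share is impossible. Any candidate limit $\bar C>C^\infty$ would require a different $\omega$-limit point. But the $\omega$-limit set of the trajectory is $\{E_1\}$, so $\limsup_{t\to\infty}C(t)=C^\infty$. In particular, $C(t)$ cannot converge to $1$ or to any value above $\mu/\beta'$. The word ``return'' should be read in this long-run sense. Transient values of $C(t)$ above $C^\infty$ can occur, for example early on when $P$ is small, but they cannot persist.

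I will also rule out any hidden recovery route through the boundary $P=0$. By uniqueness of solutions and forward invariance of the edge $\{P=0\}$, a trajectory with $P(t_0^+)>0$ never reaches that edge in finite time. Nor can it converge to $E_0$: the proof of Proposition~\ref{prop:stab4}(ii) gives $\liminf_{t\to\infty}P(t)\ge(\beta'-\mu-\eta)/(2\beta')>0$. For the full asymmetric model the same conclusion follows from Theorem~\ref{thm:global4}(ii), since $\mathcal R_{\mathrm{rad}}^+>1$.

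The second claim, that recovery requires a structural counter-shock, I will derive from the fundamental decomposition. By Theorem~\ref{thm:trigger}, $C^\infty$ is independent of any state component. Hence any further pure state shock, of any amplitude, leaves the long-run share at $\mu/\beta'$, provided $P>0$ remains, which every such shock preserves. The long-run share changes only if a later parameter shift $\beta'\to\beta''$ occurs. Recovery to $C^\infty=1$ needs $\beta''\le\mu$, that is, a reduction of $\alpha+\gamma$ by at least $\beta'-\mu$, and this reduction must be sustained, because reverting to $\beta'$ restores the attractor $E_1$. Partial recovery to a level $\bar C\in(C^\infty,1)$ needs $\beta''=\mu/\bar C<\beta'$. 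The alternative, raising $\mu$, would also be a parameter change.

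I expect the main obstacle to be interpretive rather than technical: making ``impossible'' precise as a statement about long-run limits, not instantaneous values. I will settle this by stating the conclusion as $\lim_{t\to\infty}C(t)=\mu/\beta'$ for every post-shock state with a seed.
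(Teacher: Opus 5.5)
Your proposal is correct and follows the paper's argument. You apply Proposition~\ref{prop:stab4}(ii) with the post-shock parameter $\beta'=\beta_0+\Delta\beta>\mu$, so every seeded trajectory converges to the unique attractor $E_1$, whose centrist share $\mu/\beta'$ does not depend on the post-shock state. Your additions sharpen that route without changing it: reading ``impossible'' as a statement about $\lim_{t\to\infty}C(t)$, since transient values above $C^\infty$ can occur, and noting that only a later parameter change with $\beta''\le\mu$ (or an increase in $\mu$) restores $C^\infty=1$.
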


\begin{proof}
  By Proposition~\ref{prop:stab4}(ii), every trajectory with $P_0>0$ converges
  to $E_1$ with centrist share $\mu/\beta'$.  Since $\beta'=\beta_0+\Delta\beta>\mu$
  is a fixed parameter, the attractor is unique and independent of initial
  conditions.   
\end{proof}

\begin{remark}[Role of the disengaged compartment]
  The disengaged compartment $A(t)$ serves two distinct functions in the
  analysis.  First, it captures transient crisis-induced withdrawal that
  amplifies the radical surge and determines the window duration $t^*$.
  Second, via the depletion rates $\delta$ and $\rho$, it modulates the
  effective threshold $\Delta_c$.  However, $A(t)$ does not by itself
  generate irreversible long-run change: since $A^*=0$ at every
  equilibrium (Theorem~\ref{thm:eq4}), all asymptotic dynamics lie on
  the face $A=0$ and are governed by the baseline parameters.
  Irreversible long-run radicalization arises exclusively from
  \emph{permanent structural shifts} in $\beta$ that push the system
  across the Perron--Frobenius threshold $\mathcal{R}_{\mathrm{rad}}=1$.
\end{remark}

\begin{remark}[Asymmetric generalisation via Perron--Frobenius]
\label{rem:PF_trigger}
  Theorem~\ref{thm:trigger} is stated on the symmetric invariant manifold
  $L=R=P$.  In the full asymmetric four-group model, the natural generalisation
  of the transcritical condition $\beta=\mu$ is the crossing
  \[
    \lambda_{\mathrm{PF}}\bigl(M_{\mathrm{post}}^{-1}K_{\mathrm{post}}\bigr)=1,
  \]
  where $M_{\mathrm{post}}$ and $K_{\mathrm{post}}$ are the post-shock
  deradicalisation and recruitment matrices (as in
  Theorem~\ref{thm:PF4}).
  When this eigenvalue exceeds~1, the post-shock baseline has an interior
  equilibrium and the system no longer returns to full centrism.  The
  asymmetric transient threshold is therefore spectral rather than scalar;
  Theorem~\ref{thm:PF_Delta} makes this precise, while
  Proposition~\ref{prop:PF_window} provides a comparison bound for the
  corresponding growth window.
\end{remark}

\begin{theorem}[Asymmetric critical shock threshold near the centrist state]
\label{thm:PF_Delta}
  Let
  \[
    K=
    \begin{pmatrix}
      \alpha_L & \gamma_{RL}\\
      \gamma_{LR} & \alpha_R
    \end{pmatrix},
    \qquad
    M=
    \begin{pmatrix}
      \mu_L & 0\\
      0 & \mu_R
    \end{pmatrix},
    \qquad
    D=
    \begin{pmatrix}
      \delta_L & 0\\
      0 & \delta_R
    \end{pmatrix},
  \]
  and define, for $\Delta\in[0,1]$,
  \[
    \Phi(\Delta)
    :=\lambda_{\mathrm{PF}}\!\Bigl(
      M^{-1}\bigl((1-\Delta)K+\Delta D\bigr)
    \Bigr).
  \]
  Assume the baseline is subcritical:
  $\Phi(0)=\lambda_{\mathrm{PF}}(M^{-1}K)<1$.

  Then the following hold.
  \begin{enumerate}[label=(\roman*)]
    \item If $\Phi(1)=\lambda_{\mathrm{PF}}(M^{-1}D)
      =\max\{\delta_L/\mu_L,\delta_R/\mu_R\}\le1$,
      then $\Phi(\Delta)<1$ for all $\Delta\in[0,1]$.
      Hence no feasible pure state shock can produce initial
      near-centrist radical growth.
    \item If $\Phi(1)>1$, then there exists a unique
      $\Delta_c^{\mathrm{asym}}\in(0,1)$ such that
      \[
        \Phi(\Delta_c^{\mathrm{asym}})=1.
      \]
      Consider a pure state shock of amplitude $\Delta$ from a near-centrist
      post-shock state $x(t_0^+)=(L,R)^\top=\varepsilon v$ with $v\gg0$ and
      $0<\varepsilon\ll1$, and let
      \[
        G(\Delta):=(1-\Delta)K+\Delta D-M.
      \]
      If $q(\Delta)\gg0$ is a positive left Perron eigenvector of the
      Metzler matrix $G(\Delta)$, then the weighted radical mass
      $V_\Delta(x):=q(\Delta)^\top x$ satisfies
      \[
        \dot V_\Delta(t_0^+)
        = s\!\bigl(G(\Delta)\bigr)\,V_\Delta(t_0^+) + O(\varepsilon^2),
      \]
      and therefore, for sufficiently small $\varepsilon$,
      \[
        \dot V_\Delta(t_0^+)\gtrless 0
        \iff
        s\!\bigl(G(\Delta)\bigr)\gtrless 0
        \iff
        \Phi(\Delta)\gtrless 1
        \iff
        \Delta\gtrless \Delta_c^{\mathrm{asym}}.
      \]
  \end{enumerate}
  In the symmetric case this threshold reduces exactly to
  \[
    \Delta_c^{\mathrm{asym}}
    = \frac{\mu-\beta}{\delta-\beta}.
  \]
\end{theorem}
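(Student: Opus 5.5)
The plan is to treat $\Phi$ as the Perron root of the positive matrix family $B(\Delta):=M^{-1}\bigl((1-\Delta)K+\Delta D\bigr)$, which is affine in $\Delta$. I will first establish monotonicity and continuity of $\Phi$, then read off both parts. For $\Delta\in[0,1)$, $B(\Delta)$ has strictly positive off-diagonal entries $(1-\Delta)\gamma_{ij}/\mu_i$, so it is irreducible. At $\Delta=1$, $B(1)=M^{-1}D$ is diagonal with Perron root $\max\{\delta_L/\mu_L,\delta_R/\mu_R\}$, which is the identification claimed in (i). Continuity of $\Phi$ on $[0,1]$ follows from continuity of eigenvalues, or directly from the explicit $2\times2$ formula of Theorem~\ref{thm:PF} with $\alpha_i\mapsto(1-\Delta)\alpha_i+\Delta\delta_i$ and $\gamma_{ij}\mapsto(1-\Delta)\gamma_{ij}$.

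The key structural fact is convexity. For a nonnegative matrix family that is affine in a parameter, the spectral radius is convex in that parameter. In the $2\times2$ case this can be checked by hand: the Perron root is
\[
\tfrac12\bigl(a+d+\sqrt{(a-d)^2+4bc}\bigr),
\]
where $a,d$ are affine in $\Delta$ and $bc=(1-\Delta)^2\gamma_{RL}\gamma_{LR}/(\mu_L\mu_R)$. The square root is the Euclidean norm of the affine vector $\bigl(a-d,\;2(1-\Delta)\sqrt{bc(0)}\bigr)$, so it is convex, and hence $\Phi$ is convex on $[0,1]$.

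Both parts then follow from convexity. For (i), $\Phi(\Delta)\le(1-\Delta)\Phi(0)+\Delta\Phi(1)<1$ for $\Delta<1$, because $\Phi(0)<1$ and $\Phi(1)\le1$. At $\Delta=1$ itself the chord bound only gives $\Phi(1)\le1$, so if $\Phi(1)=1$ that single endpoint reaches $1$ rather than staying strictly below it. I will flag this edge case (for instance by restricting to $\Delta<1$, which is what is meant by feasible shocks with $C>0$) rather than silently claiming the strict inequality there. For (ii), continuity and the intermediate value theorem give a root in $(0,1)$. For uniqueness, I use that a convex function with $\Phi(0)<1<\Phi(1)$ crosses the level $1$ exactly once: if $\Phi(\Delta_1)=\Phi(\Delta_2)=1$ with $\Delta_1<\Delta_2$, then convexity on $[0,\Delta_2]$ forces $\Phi(\Delta_1)<1$, a contradiction. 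The same argument shows $\Phi<1$ on $[0,\Delta_c^{\mathrm{asym}})$ and $\Phi>1$ on $(\Delta_c^{\mathrm{asym}},1]$.

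For the growth statement, I take the post-shock state $A=\Delta C(t_0^-)\approx\Delta$ with $C(t_0^-)=1-O(\varepsilon)$. I then use the exact identity from the proof of Theorem~\ref{thm:global4}:
\[
\dot x=\bigl[(1-A)K+AD-M\bigr]x-(L+R)Kx .
\]
At $A=\Delta+O(\varepsilon)$ and $x=\varepsilon v$, this gives $\dot x=G(\Delta)x+O(\varepsilon^2)$. Pairing with the left Perron vector, $q(\Delta)^\top G(\Delta)=s(G(\Delta))q(\Delta)^\top$, yields $\dot V_\Delta=s(G(\Delta))V_\Delta+O(\varepsilon^2)$. Since $V_\Delta=\Theta(\varepsilon)$, the sign of $\dot V_\Delta$ for small $\varepsilon$ equals the sign of $s(G(\Delta))$ whenever $s\neq0$; at $s=0$ this is an equivalence only to leading order. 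Next I apply the M-matrix criterion from the proof of Theorem~\ref{thm:PF4}: $s(N-M)\gtrless0\iff\lambda_{\mathrm{PF}}(M^{-1}N)\gtrless1$ for nonnegative $N$. Combined with the monotone crossing above, this gives the chain of equivalences.

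For the symmetric reduction, $(1-\Delta)K+\Delta D$ has constant row sums $(1-\Delta)\beta+\Delta\delta$, so $\Phi(\Delta)=\bigl((1-\Delta)\beta+\Delta\delta\bigr)/\mu$. Setting this equal to $1$ gives $\Delta=(\mu-\beta)/(\delta-\beta)$.

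The main obstacle is the uniqueness of the crossing, since $\Phi$ need not be monotone (for example when some $\delta_i<\alpha_i$). The convexity argument via the norm representation handles this. If it failed, I would fall back on Kingman's log-convexity of the spectral radius of entrywise log-convex families; here the entries are affine and nonnegative, hence log-convex, and log-convexity implies convexity.
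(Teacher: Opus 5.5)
Your proposal is correct and follows the paper's own proof essentially step for step: convexity of $\Phi$ from the Euclidean-norm representation of the square root, the chord bound for (i), the intermediate value theorem plus convexity for the unique crossing in (ii), the M-matrix criterion with left-Perron pairing for the growth sign, and the row-sum computation in the symmetric case. Your treatment of the endpoint $\Phi(1)=1$ is more careful than the paper's, whose proof also only yields $\Phi(1)\le1$ there, so the claimed strict inequality at $\Delta=1$ is not actually established; the same goes for your handling of the degenerate case $s(G(\Delta))=0$. One caution: drop the general assertion that the spectral radius of any affine nonnegative family is convex, and drop the Kingman fallback, since both are false (affine positive functions are log-concave, not log-convex, and $\begin{psmallmatrix}0&t\\1-t&0\end{psmallmatrix}$ has spectral radius $\sqrt{t(1-t)}$, which is concave); rely only on the norm argument, which works here precisely because both off-diagonal entries carry the same factor $1-\Delta$, making $4bc$ the square of an affine function.
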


\begin{proof}
  For the near-centrist post-shock state one has
  \[
    \dot x = G(\Delta)\,x + O(\varepsilon^2),
    \qquad
    G(\Delta)=(1-\Delta)K+\Delta D-M.
  \]
  If $q(\Delta)\gg0$ is a positive left Perron eigenvector of the irreducible
  Metzler matrix $G(\Delta)$, then
  \[
    \dot V_\Delta(t_0^+)
    = q(\Delta)^\top \dot x
    = s\!\bigl(G(\Delta)\bigr)\,V_\Delta(t_0^+) + O(\varepsilon^2),
  \]
  which proves the sign criterion once the threshold structure of
  $\Phi(\Delta)$ is established.

  By the M-matrix criterion,
  \[
    s\!\bigl(G(\Delta)\bigr)\gtrless 0
    \iff
    \lambda_{\mathrm{PF}}\!\Bigl(
      M^{-1}\bigl((1-\Delta)K+\Delta D\bigr)
    \Bigr)\gtrless 1,
  \]
  so the sign of the instantaneous linearised growth is determined by
  $\Phi(\Delta)-1$.

  Write
  \[
    a_L(\Delta)=\frac{(1-\Delta)\alpha_L+\Delta\delta_L}{\mu_L},
    \quad
    a_R(\Delta)=\frac{(1-\Delta)\alpha_R+\Delta\delta_R}{\mu_R},
    \quad
    b_0=\frac{\gamma_{RL}}{\mu_L},
    \quad
    c_0=\frac{\gamma_{LR}}{\mu_R}.
  \]
  Then
  \[
    \Phi(\Delta)
    = \frac12\Bigl(
        a_L(\Delta)+a_R(\Delta)
        + \sqrt{(a_L(\Delta)-a_R(\Delta))^2
        + 4b_0c_0(1-\Delta)^2}
      \Bigr).
  \]
  The square-root term is the Euclidean norm of the affine vector
  \[
    \Bigl(
      a_L(\Delta)-a_R(\Delta),
      \,2\sqrt{b_0c_0}\,(1-\Delta)
    \Bigr),
  \]
  hence is a convex function of $\Delta$.  Therefore $\Phi$ is continuous
  and convex on $[0,1]$.

  If $\Phi(1)\le1$, then convexity implies
  \[
    \Phi(\Delta)
    \le (1-\Delta)\Phi(0)+\Delta\Phi(1)
    < 1
    \qquad (0\le\Delta<1),
  \]
  and also $\Phi(1)\le1$, proving part~(i).

  If $\Phi(1)>1$, continuity yields existence of at least one root of
  $\Phi(\Delta)=1$ in $(0,1)$.  Because the sublevel set
  $\{\Delta\in[0,1]:\Phi(\Delta)\le1\}$ of a convex function is an
  interval, and because $\Phi(0)<1<\Phi(1)$, this root is unique.
  The symmetric reduction is immediate, since
  \[
    \Phi(\Delta)
    = \frac{(1-\Delta)\beta+\Delta\delta}{\mu}
  \]
  when $\alpha_L=\alpha_R=\alpha$,
  $\gamma_{LR}=\gamma_{RL}=\gamma$,
  $\mu_L=\mu_R=\mu$,
  and $\delta_L=\delta_R=\delta$.   
\end{proof}

\begin{remark}
  Unlike the symmetric scalar formula, $\Phi(\Delta)$ need not be monotone in
  $\Delta$: increasing disengagement both strengthens the diagonal
  mobilisation terms $\delta_iA$ and weakens the centrist-mediated recruitment
  terms through the factor $(1-\Delta)$.  The uniqueness of
  $\Delta_c^{\mathrm{asym}}$ therefore comes from convexity, not from
  monotonicity.
\end{remark}

\begin{remark}[Quadratic equation for $\Delta_c^{\mathrm{asym}}$]
\label{rem:Dc_formula}
  Write
  \[
    a_i:=\alpha_i-\mu_i,
    \qquad
    e_i:=\delta_i-\alpha_i,
    \qquad
    g:=\gamma_{RL}\gamma_{LR},
  \]
  so that
  \[
    G(\Delta)=
    \begin{pmatrix}
      a_L+e_L\Delta & (1-\Delta)\gamma_{RL}\\
      (1-\Delta)\gamma_{LR} & a_R+e_R\Delta
    \end{pmatrix}.
  \]
  At the threshold one has $s(G(\Delta_c^{\mathrm{asym}}))=0$, hence
  \[
    \det G(\Delta_c^{\mathrm{asym}})=0.
  \]
  This yields the quadratic equation
  \begin{equation}\label{eq:Dc_quadratic}
    q_2\,\Delta^2 + q_1\,\Delta + q_0 = 0,
  \end{equation}
  with coefficients
  \[
    q_2 := e_Le_R - g,
    \qquad
    q_1 := a_Le_R + a_Re_L + 2g,
    \qquad
    q_0 := a_La_R - g = \det(K-M) > 0.
  \]
  Equivalently,
  \[
    (a_L+e_L\Delta)(a_R+e_R\Delta)
    = g(1-\Delta)^2.
  \]

  The critical threshold $\Delta_c^{\mathrm{asym}}$ is the
  \emph{first root to the right of $0$}; equivalently, it is the unique root of
  \eqref{eq:Dc_quadratic} in $(0,1)$ satisfying
  \[
    \operatorname{tr} G(\Delta)\le 0.
  \]
  If $q_2\neq0$, this root is
  \begin{equation}\label{eq:Dc_asym_explicit}
    \Delta_c^{\mathrm{asym}}
    = \frac{-q_1-\sqrt{q_1^2-4q_2q_0}}{2q_2},
  \end{equation}
  while if $q_2=0$ it reduces to
  \[
    \Delta_c^{\mathrm{asym}}=-\frac{q_0}{q_1}.
  \]
  A second algebraic root may exist (and may even lie in $(0,1)$); when it
  does, it corresponds to vanishing of the non-Perron eigenvalue rather than to
  the instability threshold.

  In the symmetric case
  ($a_L=a_R=a$, $e_L=e_R=e$, $\gamma_{LR}=\gamma_{RL}=\gamma$),
  \eqref{eq:Dc_quadratic} becomes
  \[
    (a+e\Delta)^2=\gamma^2(1-\Delta)^2.
  \]
  The relevant branch is
  \[
    a+e\Delta=-\gamma(1-\Delta),
  \]
  which gives
  \[
    \Delta_c^{\mathrm{asym}}
    = \frac{\mu-\beta}{\delta-\beta}
    = \Delta_c,
  \]
  recovering the symmetric scalar formula exactly.
\end{remark}

\begin{remark}[A toy asymmetric computation]
  For example, with
  \[
    (\alpha_L,\alpha_R,\gamma_{RL},\gamma_{LR},\mu_L,\mu_R,\delta_L,\delta_R)
    =(0.08,0.14,0.10,0.06,0.30,0.28,0.55,0.60),
  \]
  formula \eqref{eq:Dc_asym_explicit} gives
  \[
    \Delta_c^{\mathrm{asym}} \approx 0.2347.
  \]
  Hence a pure state shock with $\Delta=0.20$ is subcritical, whereas
  $\Delta=0.25$ is supercritical in the sense of
  Theorem~\ref{thm:PF_Delta}.
\end{remark}

\begin{proposition}[Asymmetric Perron-vector window bound]
\label{prop:PF_window}
  Let
  \[
    x(t):=(L(t),R(t))^\top,
    \qquad
    s_0:=s(K-M)<0,
  \]
  and let $q\gg0$ be a positive left Perron eigenvector of the Metzler matrix
  $K-M$, normalised arbitrarily so that
  \[
    q^\top(K-M)=s_0 q^\top.
  \]
  Define the weighted radical mass $V(t):=q^\top x(t)$ and the comparison
  coefficient
  \[
    \bar\kappa
    := \max_{i=1,2}\frac{(q^\top(D-K))_i}{q_i}.
  \]
  For every post-shock trajectory of the full asymmetric four-group model
  \eqref{eq:4group} with $\sigma\equiv0$ arising from a pure state shock of
  amplitude $\Delta$ applied at $t=0$ to a state with $A(0^-)=0$,
  one has
  \[
    \dot V(t)
    \le \bigl[s_0 + \bar\kappa A(t)\bigr]V(t)
    \le \bigl[s_0 + \bar\kappa\Delta e^{-\rho t}\bigr]V(t).
  \]
  Consequently:
  \begin{enumerate}[label=(\roman*)]
    \item If $\bar\kappa\le0$, then $V(t)$ is strictly decreasing for all
      $t\ge0$.
    \item If $\bar\kappa>0$, define
      \[
        \Delta_q := -\frac{s_0}{\bar\kappa},
        \qquad
        t_q^*:=\frac{1}{\rho}\ln\!\left(\frac{\Delta}{\Delta_q}\right)
        \quad (\Delta>\Delta_q).
      \]
      Then $\Delta\le\Delta_q$ implies that $V(t)$ is nonincreasing for all
      $t\ge0$.  If $\Delta>\Delta_q$, every interval on which $V$ is
      increasing is contained in
      \[
        0\le t < t_q^*.
      \]
  \end{enumerate}
  In the symmetric case one may take $q=(1,1)^\top$, for which
  $s_0=\beta-\mu$, $\bar\kappa=\delta-\beta$, and therefore
  \[
    \Delta_q=\frac{\mu-\beta}{\delta-\beta}=\Delta_c,
    \qquad
    t_q^*=\frac{1}{\rho}\ln\!\left(\frac{\Delta}{\Delta_c}\right)=t^*.
  \]
\end{proposition}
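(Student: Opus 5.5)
The plan is to reuse the exact identity for the $(L,R)$-subsystem from the proof of Theorem~\ref{thm:global4}:
\[
  \dot x=(K-M)x+A(D-K)x-(L+R)Kx .
\]
Multiplying on the left by $q^\top$ gives
\[
  \dot V=s_0V+A\,q^\top(D-K)x-(L+R)\,q^\top Kx .
\]
I will bound the three terms one at a time.

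\emph{Third term.} It is nonpositive, because $q\gg0$, $K$ has positive entries, and $x\ge0$ by Proposition~\ref{prop:inv4}.

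\emph{Second term.} Write $q^\top(D-K)x=\sum_i (q^\top(D-K))_i x_i$. By the definition of $\bar\kappa$ we have $(q^\top(D-K))_i\le\bar\kappa q_i$, and since $x_i\ge0$ this gives $q^\top(D-K)x\le\bar\kappa V$. Using $A\ge0$ then yields
\[
  \dot V\le[s_0+\bar\kappa A(t)]V .
\]

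\emph{Second inequality.} The jump condition~\eqref{eq:jump} with $A(0^-)=0$ and $C(0^-)\le1$ gives $A(0)=\Delta C(0^-)\le\Delta$. Proposition~\ref{prop:Adecay4} then gives $A(t)\le\Delta e^{-\rho t}$. To pass this bound into the inequality I need $\bar\kappa A(t)V\le\bar\kappa\Delta e^{-\rho t}V$. This holds for $\bar\kappa\ge0$ because $V\ge0$. When $\bar\kappa<0$ it does not hold directly, so I would instead state the weaker bound $\dot V\le s_0V$; this is all that part~(i) needs.

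\emph{Part (i).} If $\bar\kappa\le0$, then $\dot V\le s_0V$ with $s_0<0$. So $\dot V<0$ whenever $V>0$, i.e.\ whenever $x\neq0$, and $V$ is strictly decreasing. If $x(0)=0$, then $V\equiv0$ and the claim is trivial or vacuous; I will note this degenerate case explicitly, using invariance of $\Gamma$.

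\emph{Part (ii), case $\Delta\le\Delta_q$.} Here $\bar\kappa>0$, and the coefficient satisfies
\[
  s_0+\bar\kappa\Delta e^{-\rho t}\le s_0+\bar\kappa\Delta\le0 ,
\]
so $\dot V\le0$ for all $t\ge0$.

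\emph{Part (ii), case $\Delta>\Delta_q$.} The coefficient $s_0+\bar\kappa\Delta e^{-\rho t}$ is strictly decreasing in $t$. It vanishes exactly when $e^{-\rho t}=\Delta_q/\Delta$, i.e.\ at $t_q^*$, and it is negative for $t>t_q^*$ and zero at $t=t_q^*$. On $[t_q^*,\infty)$ we therefore have $\dot V\le0$. Hence any interval on which $V$ is increasing cannot contain points $\ge t_q^*$; more precisely, it is contained in $[0,t_q^*)$.

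\emph{Main obstacle.} The delicate point is the phrase ``interval on which $V$ is increasing''. If $\dot V\le0$ on $[t_q^*,\infty)$, then $V$ cannot strictly increase on any subinterval meeting $(t_q^*,\infty)$. This needs the observation that $\dot V<0$ strictly there whenever $V>0$, since $s_0+\bar\kappa\Delta e^{-\rho t}<0$ for $t>t_q^*$. That excludes nondegenerate increasing intervals crossing $t_q^*$. An interval touching $t_q^*$ only at its endpoint is handled by taking the half-open convention.

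\emph{Symmetric case.} With $q=(1,1)^\top$ we have $q^\top(K-M)=(\beta-\mu)(1,1)$, so $s_0=\beta-\mu$. We also have $q^\top(D-K)=(\delta-\beta)(1,1)$, so $\bar\kappa=\delta-\beta$. Substituting gives $\Delta_q=\Delta_c$ and $t_q^*=t^*$, matching Theorem~\ref{thm:window}.
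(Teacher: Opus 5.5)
Your proposal is correct and follows the paper's proof essentially step for step. It uses the same exact identity $\dot V=s_0V+A\,q^\top(D-K)x-(L+R)\,q^\top Kx$ and the same termwise bounds. It gets $A(t)\le A(0)e^{-\rho t}\le\Delta e^{-\rho t}$ the same way, via the jump condition and Proposition~\ref{prop:Adecay4}. It then does the same sign analysis of $s_0+\bar\kappa\Delta e^{-\rho t}$.

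Your caveat about $\bar\kappa<0$ is right, and the paper's ``combining the two inequalities'' step glosses over it. In the symmetric case with $\delta<\beta$ and $\rho<\mu-\beta$, the displayed bound $\dot V\le[s_0+\bar\kappa\Delta e^{-\rho t}]V$ actually fails for large $t$. Parts (i)--(ii) nevertheless survive exactly as you argue.

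For the endpoint issue at $t_q^*$, strict positivity of $(L+R)\,q^\top Kx$ for $x\neq0$ gives $\dot V(t_q^*)<0$. That settles it more cleanly than a half-open convention.
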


\begin{proof}
  Using $C=1-L-R-A$, the $(L,R)$-subsystem of \eqref{eq:4group} can be written
  exactly as
  \[
    \dot x
    = \bigl[(1-A)K + AD - M\bigr]x - (L+R)Kx.
  \]
  Therefore
  \[
    \dot V
    = q^\top \dot x
    = s_0V + A\,q^\top(D-K)x - (L+R)\,q^\top Kx.
  \]
  Since $x\ge0$, $q\gg0$, and $K$ has nonnegative entries,
  $(L+R)\,q^\top Kx\ge0$.  By definition of $\bar\kappa$,
  \[
    q^\top(D-K)x
    \le \bar\kappa\,q^\top x
    = \bar\kappa V.
  \]
  Hence
  \[
    \dot V \le [s_0+\bar\kappa A(t)]V.
  \]
  Moreover,
  \[
    \dot A
    = -(\delta_L L+\delta_R R+\rho)A
    \le -\rho A,
  \]
  so
  \[
    A(t)\le A(0)e^{-\rho t}.
  \]
  By the impulse jump condition \eqref{eq:jump} with $A(0^-)=0$,
  \[
    A(0)=\Delta\,C(0^-)\le\Delta.
  \]
  Hence
  \[
    A(t)\le \Delta e^{-\rho t}.
  \]
  Combining the two inequalities gives the stated comparison bound.

  If $\bar\kappa\le0$, then $s_0+\bar\kappa A(t)<0$ for all $t$, so $V$
  is strictly decreasing.  If $\bar\kappa>0$ and $\Delta\le\Delta_q$, then
  $s_0+\bar\kappa\Delta e^{-\rho t}\le s_0+\bar\kappa\Delta\le0$ for
  all $t$, hence $V$ is nonincreasing.  Finally, if $\Delta>\Delta_q$, then
  the comparison coefficient becomes negative once
  $\Delta e^{-\rho t}<\Delta_q$, i.e.\ for $t>t_q^*$.  Thus $V$ cannot
  increase after time $t_q^*$.  The symmetric reduction follows by direct
  substitution.   
\end{proof}

\subsection{Cumulative shocks}

The European experience since 2008 is not characterised by a single large
shock but by a sequence of distinct crises.

\begin{definition}[Shock sequence]\label{def:sequence}
  A \emph{shock sequence} is a collection of mixed shocks at times
  $t_1 < t_2 < \cdots < t_n$, with state components $\Delta_1,\ldots,\Delta_n$
  and structural components $\Delta\beta_1,\ldots,\Delta\beta_n\ge0$.
  The \emph{cumulative structural shift} after $k$ shocks is
  \[
    B_k := \beta_0 + \sum_{j=1}^k \Delta\beta_j.
  \]
\end{definition}

\begin{theorem}[Cumulative shock theorem]\label{thm:cumulative}
  Consider a system in Regime~I ($\beta_0<\mu$) with a nonzero
  radical seed ($P(t_k^+)>0$ after each shock).
  Let $\{(\Delta_k, \Delta\beta_k)\}$ be a shock sequence with $\Delta\beta_k\ge0$,
  and let $k^*$ be the first index such that $B_{k^*}>\mu$.
  Then:
  \begin{enumerate}[label=(\roman*)]
\item For all $k<k^*$: the long-run centrist share after each shock
  is $C_k^\infty = 1$.  If, in addition, $\delta>B_k$, then near-centrist
  transient surges are possible only when
  \[
    \Delta_k>\Delta_c^{(k)},\qquad
    \Delta_c^{(k)}=\frac{\mu-B_k}{\delta-B_k},
  \]
  where the surge threshold is evaluated under the \emph{post-shock}
  parameter value $B_k$.  If $\delta\le B_k$, the near-centrist threshold
  formula from Theorem~\ref{thm:critical} does not apply in this form.
    \item At $k=k^*$: the system crosses the bifurcation threshold and
      permanently transitions to Regime~II.  The long-run centrist share
      is $C_{k^*}^\infty = \mu/B_{k^*} < 1$.
    \item Each subsequent shock satisfies
      \[
        C_k^\infty = \frac{\mu}{B_k} \le C_{k-1}^\infty,
      \]
      with strict inequality if and only if $\Delta\beta_k>0$.
    \item The threshold-crossing index $k^*$ satisfies
      \begin{equation}\label{eq:kstar}
        k^* = \min\!\left\{k : \sum_{j=1}^k \Delta\beta_j > \mu - \beta_0\right\}.
      \end{equation}
  \end{enumerate}
\end{theorem}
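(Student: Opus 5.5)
The plan is to reduce everything to repeated applications of the bifurcation trigger theorem (Theorem~\ref{thm:trigger}) and the symmetric global classification (Proposition~\ref{prop:stab4}). Between consecutive shocks the system is autonomous with parameter $B_k$. The ``long-run centrist share after shock $k$'' is understood as the limit of $C(t)$ for the autonomous post-shock flow with parameter $B_k$, started from the state at $t_k^+$. Two facts make this limit depend only on $B_k$ and $\mu$. First, the structural shifts are nonnegative, so $B_k$ is nondecreasing in $k$. Second, by hypothesis $P(t_k^+)>0$ after every shock.

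\textbf{Step 1 (part (iv) and monotonicity).} I would note that $B_k$ is nondecreasing because $\Delta\beta_j\ge0$. The condition $B_k>\mu$ is literally $\sum_{j\le k}\Delta\beta_j>\mu-\beta_0$, so the first index with $B_k>\mu$ is given by \eqref{eq:kstar}. Monotonicity also gives $B_k\le\mu$ for all $k<k^*$ and $B_k>\mu$ for all $k\ge k^*$. This is the ``permanent'' part of (ii): once the threshold is crossed, it is never crossed back.

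\textbf{Step 2 (parts (i)--(ii), long-run shares).} For each $k$ I would apply Proposition~\ref{prop:stab4} to the post-shock system with parameter $\beta=B_k$, starting from $(P(t_k^+),A(t_k^+))$ with $P(t_k^+)>0$. If $B_k\le\mu$, part (i) of that proposition gives convergence to $E_0$, so $C_k^\infty=1$. If $B_k>\mu$, part (ii) gives convergence to $E_1$ with $C^*=\mu/B_k<1$, by Corollary~\ref{cor:Cstar}. This is exactly the argument of Theorem~\ref{thm:trigger} with $\beta_0$ replaced by $B_{k-1}$ and $\Delta\beta$ by $\Delta\beta_k$.

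\textbf{Step 3 (surge threshold in (i)).} For $k<k^*$ with $B_k<\mu$ and $\delta>B_k$, I would invoke Theorem~\ref{thm:critical} with $\beta$ replaced by the post-shock value $B_k$, in the near-centrist regime $P(t_k^+)=\varepsilon$. This yields the threshold $\Delta_c^{(k)}=(\mu-B_k)/(\delta-B_k)$. The theorem's hypothesis $\delta>\beta$ is precisely the stated side condition. The boundary case $B_k=\mu$ needs a remark, since then $\Delta_c^{(k)}=0$; I would check directly from \eqref{eq:2d} that $\dot P/P=(\delta-\mu)\Delta_k>0$ in that case, which is consistent with the formula.

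\textbf{Step 4 (part (iii)).} For $k>k^*$ both $B_k$ and $B_{k-1}$ exceed $\mu$, so $C_k^\infty=\mu/B_k\le\mu/B_{k-1}=C_{k-1}^\infty$, with equality iff $B_k=B_{k-1}$, i.e.\ iff $\Delta\beta_k=0$. At $k=k^*$ the comparison is with $C_{k^*-1}^\infty=1>\mu/B_{k^*}$, which is strict, consistent with $\Delta\beta_{k^*}>0$ (forced by minimality of $k^*$).

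\textbf{Main obstacle.} The main subtlety is interpretive rather than technical. ``Long-run share after shock $k$'' must be read as the attractor of the $k$-th autonomous regime, because the next shock may arrive before convergence. I would make this explicit. The key point is that Proposition~\ref{prop:stab4} yields a limit independent of the initial state as long as $P>0$, so intermediate history never matters. The positivity of $P$ across shocks is assumed, and it is preserved anyway, since jumps leave $P$ unchanged and $\{P=0\}$ is invariant.
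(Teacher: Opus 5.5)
Your proposal is correct and follows the paper's own argument: apply the post-shock classification iteratively with parameter $B_k$, and read part~(iv) off the definition of $k^*$. The paper cites Theorem~\ref{thm:trigger} for this, which itself rests on Proposition~\ref{prop:stab4}. Your version is slightly tighter than the paper's one-line proof on three points it glosses over. Citing Proposition~\ref{prop:stab4} directly avoids invoking Theorem~\ref{thm:trigger} once the pre-shock value $B_{k-1}$ already exceeds $\mu$, where its hypothesis fails; you treat the boundary case $B_k=\mu$ explicitly; and you handle the comparison at $k=k^*$.
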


\begin{proof}
  Parts (i)--(iii) follow from Theorem~\ref{thm:trigger} applied
  iteratively: after each shock the post-shock parameter is $B_k$,
  and the long-run attractor is $E_0$ if $B_k<\mu$ and $E_1(B_k)$
  if $B_k>\mu$.  Part (iv) is a restatement of the definition of $k^*$.   
\end{proof}

\begin{corollary}[Staircase dynamics]\label{cor:staircase}
  Under the conditions of Theorem~\ref{thm:cumulative}, after the
  threshold-crossing shock $k^*$, every subsequent shock with $\Delta\beta_k>0$
  strictly decreases the long-run centrist share: $C_{k}^\infty < C_{k-1}^\infty$.
  If all subsequent structural components satisfy $\Delta\beta_k>0$, the
  sequence $C_{k^*}^\infty > C_{k^*+1}^\infty > \cdots$ is strictly decreasing.
  This is the \emph{staircase pattern} discussed qualitatively for Germany
  and France in Section~\ref{sec:empirical}.
\end{corollary}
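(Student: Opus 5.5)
The corollary follows from Theorem~\ref{thm:cumulative}, parts (ii) and (iii), and only needs the monotonicity of the map $B\mapsto \mu/B$ on $(\mu,\infty)$.

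First I fix the setting. The shock sequence satisfies $\Delta\beta_j\ge0$, so the cumulative structural shift $B_k=\beta_0+\sum_{j\le k}\Delta\beta_j$ is nondecreasing in $k$. By definition of $k^*$, $B_{k^*}>\mu$. It follows that $B_k\ge B_{k^*}>\mu$ for every $k\ge k^*$, so the system stays in Regime~II after the crossing. The nonzero-seed hypothesis $P(t_k^+)>0$ holds after each shock, so Theorem~\ref{thm:trigger} (through Proposition~\ref{prop:stab4}(ii)) applies after every shock. It gives the long-run centrist share $C_k^\infty=\mu/B_k$ for all $k\ge k^*$, which is exactly Theorem~\ref{thm:cumulative}(ii)--(iii).

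Second, I compare consecutive values. For $k>k^*$ we have $B_k=B_{k-1}+\Delta\beta_k$ with both $B_k$ and $B_{k-1}$ exceeding $\mu>0$. Since $B\mapsto\mu/B$ is strictly decreasing on $(0,\infty)$,
\[
C_k^\infty=\frac{\mu}{B_{k-1}+\Delta\beta_k}<\frac{\mu}{B_{k-1}}=C_{k-1}^\infty
\quad\text{whenever } \Delta\beta_k>0 .
\]
This is the strict case of Theorem~\ref{thm:cumulative}(iii). The only care needed is at the crossing index itself. At $k=k^*$, $C_{k^*-1}^\infty=1$ by part (i), while $C_{k^*}^\infty=\mu/B_{k^*}<1$, so the inequality holds there too even though the formula $\mu/B_{k-1}$ does not apply to $k-1=k^*-1$. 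The corollary, however, only concerns $k>k^*$.

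Third, if every $\Delta\beta_k>0$ for $k>k^*$, I chain these strict inequalities to get $C_{k^*}^\infty>C_{k^*+1}^\infty>\cdots$. The link to the qualitative staircase pattern of Section~\ref{sec:empirical} is interpretive and needs no proof.

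No step is a real obstacle. The one point to state explicitly is that monotonicity of $B_k$ keeps all later parameters supercritical, so the formula $C^\infty=\mu/B_k$ stays valid for every $k\ge k^*$.
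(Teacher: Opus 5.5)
Your proposal is correct and follows the paper's proof. The paper's proof is the one-line observation that $\Delta\beta_k>0$ gives $B_k>B_{k-1}$ and hence $\mu/B_k<\mu/B_{k-1}$, via Theorem~\ref{thm:cumulative}(iii); your explicit check that $B_{k-1}\ge B_{k^*}>\mu$ for $k>k^*$, which makes $C_{k-1}^\infty=\mu/B_{k-1}$ valid, is a detail the paper leaves implicit.
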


\begin{proof}
  Immediate from Theorem~\ref{thm:cumulative}(iii): $\Delta\beta_k>0$ implies
  $B_k>B_{k-1}$, hence $\mu/B_k < \mu/B_{k-1}$.   
\end{proof}

\section{Qualitative Illustrations from Germany and France}\label{sec:empirical}

This section is illustrative only and does not identify shocks or estimate
parameters.  Germany is qualitatively consistent with a sequence of mixed
shocks and cumulative structural shifts: AfD support moved from 4.7\%
(2013) to 12.6\% (2017), 10.3\% (2021), and 20.8\% (2025), a pattern
compatible with transient oscillations around a rising post-crisis radical
floor \cite{bundeswahlleiterin2025,eurobarometer2013}.  The 2021 dip is not
an anomaly: Corollary~\ref{cor:staircase} governs long-run floors, not
monotone election-by-election observations.
France is used more cautiously: because presidential, legislative, and
European elections are institutionally different and not commensurate,
the case serves only to motivate the asymmetric setting in which both the
RN and the LFI bloc have strengthened while the mainstream centre weakens
\cite{ministere2024}.  No econometric identification is claimed.

The figures above are Zweitstimmen or vote-share percentages of votes
cast.  Section~\ref{sec:proxies} below uses a different normalisation:
$V=(L+R)\times\text{turnout}$ expressed as a fraction of the eligible
electorate, so that $L+R+C+A=1$ holds exactly.

\subsection{Observable proxies and parameter plausibility}
\label{sec:proxies}

\paragraph{Mapping to observables.}
The conservation law $L+R+C+A=1$ requires that all four compartments
be expressed as fractions of the same reference population.  We use the
\emph{eligible electorate} (all registered voters):
\begin{align}
  R(t) &= \bigl(\text{AfD Zweitstimmen\%}\bigr)\times\text{turnout},
  \notag\\
  L(t) &= \bigl(\text{far-left Zweitstimmen\%}\bigr)\times\text{turnout},
  \notag\\
  C(t) &= \bigl(\text{all remaining parties}\bigr)\times\text{turnout},
  \notag\\
  A(t) &= 1-\text{turnout}.
  \label{eq:proxy}
\end{align}
Here ``far-left'' denotes Die Linke for 2013--2021, and Die Linke
together with BSW for 2025.  All parties not assigned to $L$ or $R$---
CDU/CSU, SPD, FDP, the Greens, and minor parties---are absorbed into $C$.
By construction $L+R+C+A=1$ at every observation, so the conservation
law is satisfied exactly.  Crucially, $A(t)=1-\text{turnout}$ is
directly observable from official election records without any proxy
construction.

\begin{table}[ht]
  \centering
  \caption{Bundestag elections 2013--2025, expressed as fractions of the
    eligible electorate according to the mapping~\eqref{eq:proxy}.
    $V=L+R$ is the total radical share.
    Source: Bundeswahlleiterin~\cite{bundeswahlleiterin2025}.}
  \label{tab:germany}
  \smallskip
  \begin{tabular}{lcccc}
    \hline
    Year & $V=L+R$ & $C$ (mainstream + residual) & $A$ (non-voters)
         & $V+C+A$ \\
    \hline
    2013 & 0.095 & 0.620 & 0.285 & 1.000 \\
    2017 & 0.166 & 0.596 & 0.238 & 1.000 \\
    2021 & 0.116 & 0.650 & 0.234 & 1.000 \\
    2025 & 0.246 & 0.584 & 0.170 & 1.000 \\
    \hline
  \end{tabular}
\end{table}

\paragraph{Background and crisis-induced disengagement.}
The model predicts $A^*=0$ at every equilibrium
(Theorem~\ref{thm:eq4}), meaning that \emph{crisis-induced}
disengagement dissipates in the long run.  The raw observable
$A=1-\text{turnout}$ mixes a structural background of chronic
non-voting with the crisis-induced excess that the model tracks.
The 2025 value $A(2025)=0.170$ is the lowest in the Bundestag series
since reunification and plausibly represents the structural background;
it corresponds to the high-turnout election in which prior disengagement
was largely mobilised into the radical bloc.  Taking $A_{\mathrm{bg}}
\approx 0.170$, the crisis-induced excess $A_{\mathrm{exc}}=A-A_{\mathrm{bg}}$
reads $0.115$ (2013), $0.068$ (2017), $0.064$ (2021), and $0.000$ (2025),
a sequence that declines monotonically toward zero---consistent with
$A^*=0$.  The drop in $A$ from $0.234$ (2021) to $0.170$ (2025) coincides
with the largest single-period increase in $V$, from $0.116$ to $0.246$,
which is the mobilisation signature of the $A\to L\cup R$ mechanism.

\paragraph{Illustrative parameter values.}
We exhibit a parameter set for which the model's equilibrium floors are
consistent with the data.  Fix $\mu=0.22$ and $\beta_0=0.18$,
corresponding to a subcritical baseline ($\mathcal{R}_{\mathrm{rad}}=0.82<1$;
$V^*_0=0$) consistent with the pre-2013 German party system in which no
far-right party had entered the Bundestag since reunification.  After
the 2015 migration crisis, set
\begin{equation}
  \beta_1 = 0.249, \qquad
  \mathcal{R}_{\mathrm{rad}} = \tfrac{\beta_1}{\mu} = 1.13 > 1,\qquad
  V^*_1 = 1 - \tfrac{\mu}{\beta_1} = 0.1165.
  \label{eq:beta1}
\end{equation}
After the 2022 energy and cost-of-living crisis, set
\begin{equation}
  \beta_2 = 0.2917, \qquad
  \mathcal{R}_{\mathrm{rad}} = 1.33 > 1, \qquad
  V^*_2 = 1 - \tfrac{\mu}{\beta_2} = 0.2458.
  \label{eq:beta2}
\end{equation}
The two structural shifts are $\Delta\beta_1=\beta_1-\beta_0=0.069$
(2015) and $\Delta\beta_2=\beta_2-\beta_1=0.043$ (2022).

\begin{remark}[No circularity claim]
  The values of $\beta_1$ and $\beta_2$ in~\eqref{eq:beta1}--\eqref{eq:beta2}
  are \emph{defined} by requiring $V^*_1\approx V(2021)$ and
  $V^*_2\approx V(2025)$.  These two equalities carry no independent
  predictive content.  The non-trivial empirical content of the model
  consists solely of the three structural predictions listed below, none
  of which is used in choosing the parameters.
\end{remark}

\paragraph{Three qualitative predictions.}
Under the calibration~\eqref{eq:beta1}--\eqref{eq:beta2}, the model
makes three structural predictions that can be checked directly against
Table~\ref{tab:germany}.

\begin{enumerate}[label=(P\arabic*)]

\item \emph{Staircase of long-run floors.}
  Corollary~\ref{cor:staircase} predicts a strictly increasing sequence
  of post-shock equilibria: $V^*_0=0 < V^*_1 < V^*_2$.  The relevant
  comparison in the data is between elections that occur well after each
  shock has passed: 2021 (six years after shock~1) and 2025 (three years
  after shock~2).  The observed sequence $V(2021)=0.116 < V(2025)=0.246$
  is consistent with the staircase prediction.

\item \emph{Transient overshoot above the new floor.}
  Theorem~\ref{thm:critical} predicts that immediately after shock~1
  the system overshoots $V^*_1=0.116$ before returning to it.  The 2017
  observation $V(2017)=0.166 > V^*_1=0.116$ is consistent with the
  system being in the transient overshoot phase; by 2021, $V=0.116
  \approx V^*_1$.  The 2021 dip relative to 2017 is thus a
  \emph{model prediction}, not an anomaly requiring separate explanation.

\item \emph{Co-movement of $V$ and $A$.}
  The model predicts that $V$ and $A$ move in opposite directions as
  the crisis-induced pool is absorbed into the radical blocs
  ($A\to L\cup R$).  In the data, $A$ falls monotonically from $0.285$
  (2013) to $0.170$ (2025), while $V$ rises from $0.095$ to $0.246$.
  The largest single-period co-movement---$A$ drops by $0.064$ and $V$
  rises by $0.130$ between 2021 and 2025---is consistent with the
  $\delta$-mobilisation mechanism amplifying the post-2022 radical surge.

\end{enumerate}

We emphasise that this is a \emph{stylised calibration}: the parameter
values are chosen to reproduce the qualitative regime structure
(subcritical pre-2015, supercritical post-2015, staircase after 2022),
not to minimise any statistical criterion.  A rigorous estimation
procedure would require a probabilistic observation model and is beyond
the scope of the present paper.

\section{Numerical Illustrations of the Four-Group Model}\label{sec:numerics4}

All simulations use the \texttt{solve\_ivp} function (RK45 solver) from
\texttt{SciPy} \cite{scipy2020} with relative tolerance $10^{-9}$ and absolute
tolerance $10^{-12}$.

\subsection{Three dynamic regimes}

Figure~\ref{fig:regimes} illustrates the three qualitatively distinct
post-shock trajectories established analytically in Section~\ref{sec:analysis4},
using parameters $\beta_0=0.30$, $\mu=0.40$, $\delta=0.70$, $\rho=0.10$,
giving $\Delta_c=0.25$.

\begin{figure}[ht]
\centering
\includegraphics[width=\textwidth]{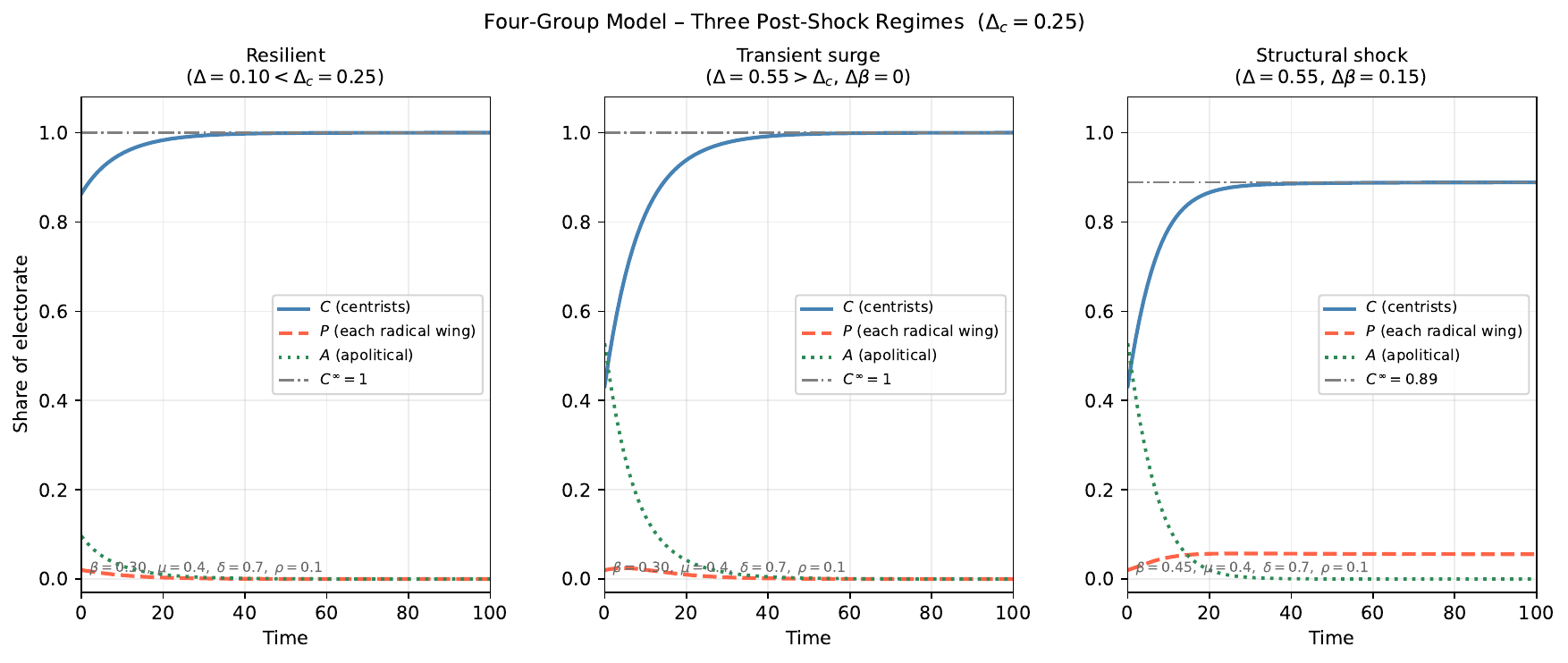}
\caption{The three shock regimes of the four-group model.
  \textbf{Left:} Small shock ($\Delta=0.10 < \Delta_c=0.25$):
  no surge, monotone recovery to $C^\infty=1$.
  \textbf{Centre:} Large state shock ($\Delta=0.55>\Delta_c$, $\Delta\beta=0$):
  transient populist surge followed by full recovery.
  The apolitical pool $A$ (dotted) drives the surge and is depleted on
  timescale $1/\rho=10$.
  \textbf{Right:} Structural shock ($\Delta=0.55$, $\Delta\beta=0.15$):
  $\beta'=0.45>\mu=0.40$; the system converges to a new equilibrium with
  $C^\infty=\mu/\beta'\approx0.89<1$.  The state component $\Delta$ determines
  the transient; the structural component $\Delta\beta$ alone determines $C^\infty$.
  Parameters: $\beta_0=0.30$, $\mu=0.40$, $\delta=0.70$, $\rho=0.10$.}
\label{fig:regimes}
\end{figure}

\subsection{The staircase pattern}

Figure~\ref{fig:staircase} simulates four shocks at times $t=10, 25, 40, 60$
(inter-shock intervals 15, 15, 20, chosen unevenly to illustrate that the
staircase pattern does not require periodic forcing), each with state component
$\Delta_k=0.40$ and structural component $\Delta\beta_k=0.04$.  The cumulative
structural drift $B_k$ crosses $\mu=0.40$ at shock $k^*=3$
(since $\beta_0+3\times0.04=0.42>\mu$).

\begin{figure}[ht]
\centering
\includegraphics[width=0.85\textwidth]{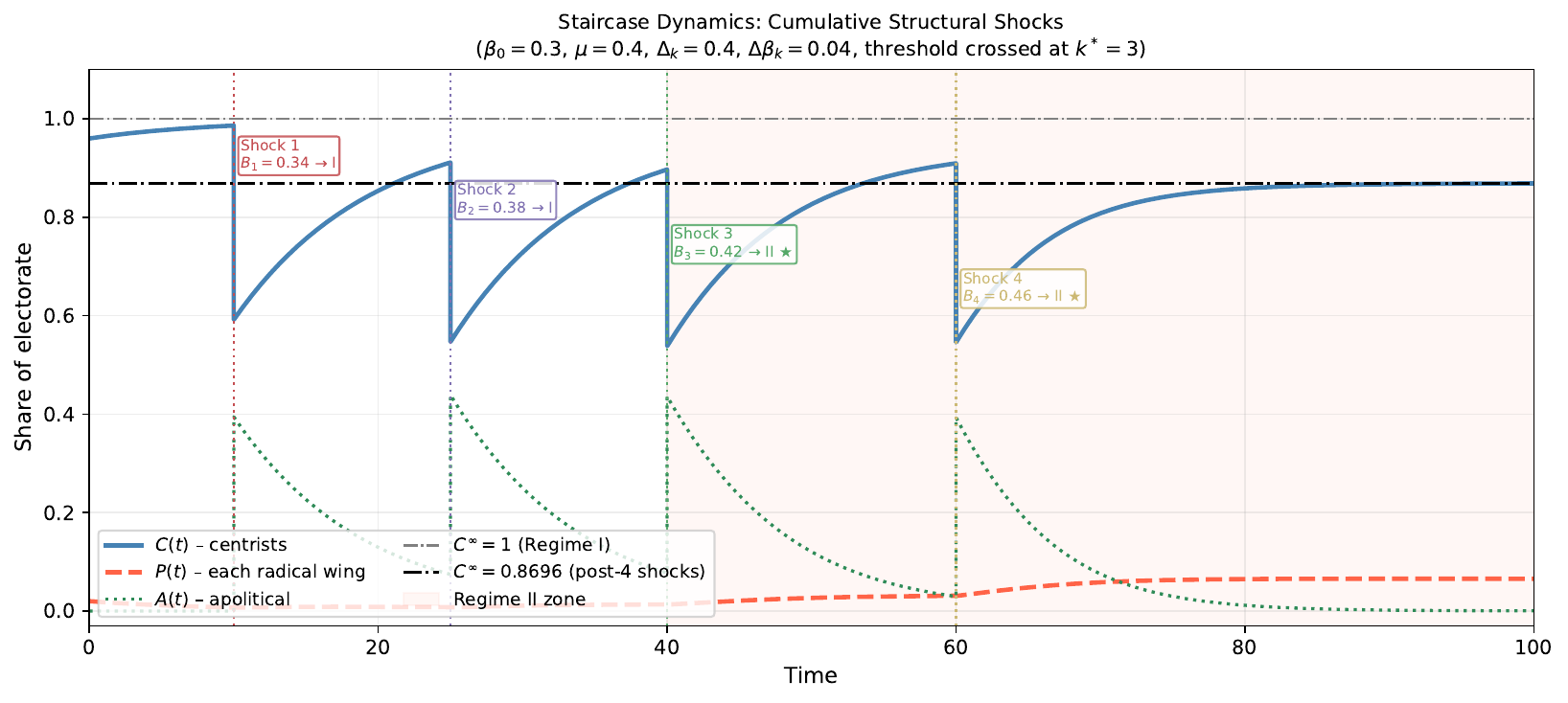}
\caption{Staircase dynamics under four sequential shocks.  Each shock
  has state component $\Delta_k=0.40$ and structural component $\Delta\beta_k=0.04$.
  The cumulative structural shift $B_k$ crosses $\mu=0.40$ at shock $k^*=3$
  (since $\beta_0+3\times0.04=0.42>\mu$).  Before $k^*$: the centrist share
  recovers (partially) after each shock.  After $k^*$: each additional shock
  permanently lowers the long-run centrist share, producing the staircase
  pattern.  Parameters: $\beta_0=0.30$, $\mu=0.40$, $\delta=0.70$, $\rho=0.10$.}
\label{fig:staircase}
\end{figure}

\subsection{Asymmetric four-group dynamics}\label{sec:numerics_asym}

The two subsections above use symmetric parameters ($\alpha_L=\alpha_R$,
$\delta_L=\delta_R$) to maximise analytical tractability.  We now complement
those illustrations with asymmetric scenarios that exhibit the full
structure guaranteed by Theorems~\ref{thm:PF4} and~\ref{thm:PF_Delta}.

\paragraph{Trajectories below and above the Perron--Frobenius threshold.}
Figure~\ref{fig:asym_traj} shows trajectories of the four-group
system~\eqref{eq:4group} for two asymmetric parameter sets.
In the subcritical case ($\mathcal{R}_{\mathrm{rad}}=0.840<1$),
the impulse shock produces a transient surge in $A(t)$ and a brief
slowing of centrist recovery, but all trajectories converge to $E_0$
($C^\infty=1$) as guaranteed by Theorem~\ref{thm:global4}.
In the supercritical case ($\mathcal{R}_{\mathrm{rad}}=1.593>1$),
the system converges to the asymmetric coexistence equilibrium
$E_1=(L^*,R^*,0)$ with $L^*=0.285\ne R^*=0.088$, determined by the
Perron eigenvector of $M^{-1}K$ (Theorem~\ref{thm:PF4}(ii)).
The centrist equilibrium satisfies $C^*=1/\mathcal{R}_{\mathrm{rad}}=0.628$,
confirmed numerically to within tolerance~$10^{-8}$.

\begin{figure}[ht]
\centering
\includegraphics[width=\textwidth]{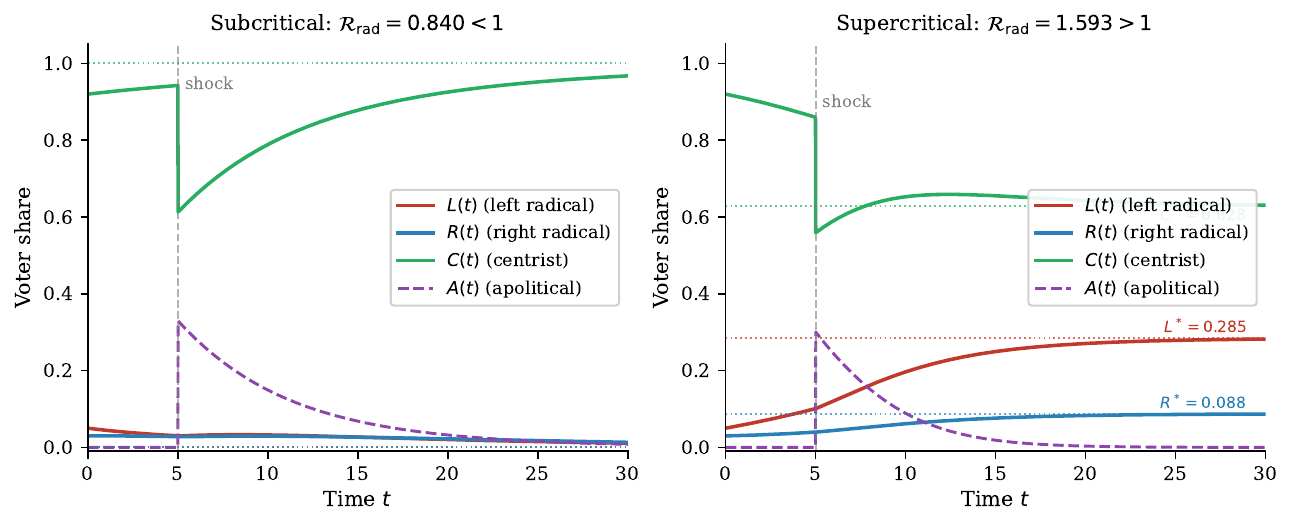}
\caption{Four-group asymmetric trajectories with an impulse shock of
  amplitude $\Delta=0.35$ at $t=5$.
  \textbf{Left} ($\mathcal{R}_{\mathrm{rad}}=0.840<1$):
    subcritical regime.  The shock produces a transient apolitical
    surge ($A$, dashed purple) and brief deceleration of centrist
    recovery, but the system returns to full centrism ($C^\infty=1$,
    dotted green line).
  \textbf{Right} ($\mathcal{R}_{\mathrm{rad}}=1.593>1$):
    supercritical regime.  The system converges to the asymmetric
    interior equilibrium $L^*=0.285\ne R^*=0.088$ (dotted lines),
    with $C^*=1/\mathcal{R}_{\mathrm{rad}}=0.628$.
    The strong left--right asymmetry arises from $\alpha_L>\alpha_R$
    and is encoded in the Perron eigenvector of $M^{-1}K$.
  Parameters: left panel $\alpha_L=0.15$, $\alpha_R=0.20$,
    $\gamma_{RL}=0.08$, $\gamma_{LR}=0.12$, $\delta_L=0.70$,
    $\delta_R=0.55$, $\mu_L=0.30$, $\mu_R=0.35$, $\rho=0.12$;
    right panel $\alpha_L=0.40$, $\alpha_R=0.25$, $\gamma_{RL}=0.15$,
    $\gamma_{LR}=0.08$, $\delta_L=0.75$, $\delta_R=0.60$, $\mu_L=0.28$,
    $\mu_R=0.32$, $\rho=0.10$.}
\label{fig:asym_traj}
\end{figure}

\paragraph{The asymmetric shock threshold $\Delta_c^{\mathrm{asym}}$.}
Figure~\ref{fig:phi_delta} plots $\Phi(\Delta):=
\lambda_{\mathrm{PF}}(M^{-1}((1-\Delta)K+\Delta D))$ as a function of
shock amplitude~$\Delta$ for both a symmetric and an asymmetric parameter
set, both satisfying $\Phi(0)<1$ (subcritical baseline) and $\Phi(1)>1$
(large-shock mobilisation dominant).
Each curve is strictly increasing and crosses $\Phi=1$ exactly once,
at $\Delta_c^{\mathrm{sym}}=0.091$ and $\Delta_c^{\mathrm{asym}}=0.116$
respectively, confirming the uniqueness guaranteed by
Theorem~\ref{thm:PF_Delta}(ii).
The difference between the two thresholds is determined solely by the
asymmetry of $K$ and $D$ relative to $M$; in the symmetric reduction
both thresholds coincide with the scalar formula
$\Delta_c=(\mu-\beta)/(\delta-\beta)$.

\begin{figure}[ht]
\centering
\includegraphics[width=0.65\textwidth]{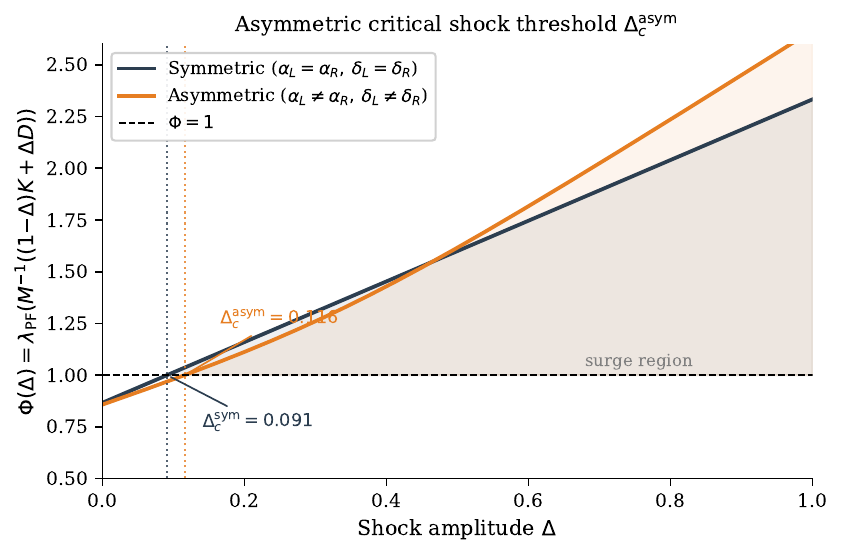}
\caption{The function $\Phi(\Delta)=\lambda_{\mathrm{PF}}
  (M^{-1}((1-\Delta)K+\Delta D))$ for symmetric (dark, $\alpha_L=\alpha_R$,
  $\delta_L=\delta_R$) and asymmetric (orange, $\alpha_L\ne\alpha_R$,
  $\delta_L\ne\delta_R$) parameter sets.  Both curves are strictly
  increasing and cross the threshold $\Phi=1$ exactly once, at
  $\Delta_c^{\mathrm{sym}}=0.091$ and $\Delta_c^{\mathrm{asym}}=0.116$
  (vertical dotted lines).  The shaded region ($\Phi>1$) is the surge
  regime.  The gap between the two thresholds reflects the
  Perron-vector correction of Theorem~\ref{thm:PF_Delta}.
  Symmetric parameters: $\alpha=0.18$, $\gamma=0.08$, $\mu=0.30$,
  $\delta=0.70$.  Asymmetric parameters: $\alpha_L=0.15$,
  $\alpha_R=0.22$, $\gamma_{RL}=0.08$, $\gamma_{LR}=0.05$,
  $\mu_L=\mu_R=0.30$, $\delta_L=0.80$, $\delta_R=0.55$.}
\label{fig:phi_delta}
\end{figure}

\section{Conclusion}\label{sec:conclusion}
\subsection{Summary of main results}

We developed and analysed two coupled ODE models of political competition with
a conserved electorate.

The baseline three-group model is proved to be structurally self-stabilising:
global asymptotic stability holds in both the symmetric and asymmetric cases,
periodic orbits are unconditionally excluded via the Dulac function
$B=(LRC)^{-1}$, and with fixed parameters the model cannot generate staircase
dynamics, history-dependent long-run floors, or multiple attractors.

The four-group extension adds a disengaged compartment and distinguishes pure
state shocks from permanent structural parameter shifts.  The post-shock
dynamics admit a complete global classification governed by the spectral
threshold $\mathcal{R}_{\mathrm{rad}}=\lambda_{\mathrm{PF}}(M^{-1}K)$:
below it the centrist state is globally asymptotically stable; above it every
trajectory with a nonzero radical seed converges to the unique radicalised
equilibrium.  Irreversible long-run shifts arise exclusively from structural
parameter changes crossing this threshold; cumulative sub-threshold shifts
produce staircase dynamics absent from the baseline.  The empirical
illustrations against German Bundestag data 2013--2025 are qualitative only
and do not constitute a statistical test.

\subsection{Identifiability and estimation}\label{sec:identifiability}

A rigorous quantitative calibration would require a probabilistic
observation model linking the ODE state to electoral and survey data,
and faces the following identifiability obstacles.

\paragraph{Structural non-identifiability in the baseline model.}
In the symmetric three-group model, $\alpha$ and $\gamma$ enter the
linearisation only through their sum $\beta=\alpha+\gamma$.  The
Perron--Frobenius threshold $\mathcal{R}_{\mathrm{rad}}=\beta/\mu$ and
the equilibrium $V^*=1-\mu/\beta$ are identified from vote-share time
series, but $\alpha$, $\gamma$, and $\mu$ individually are not.
Separate identification of these parameters would require data on
voter \emph{flows} between consecutive elections, available in
principle from panel surveys such as the German Longitudinal
Election Study (GLES)~\cite{gles2021}.

\paragraph{Four-group identifiability.}
The four-group model adds the mobilisation rate $\delta$ and
re-engagement rate $\rho$.  Because $A(t)=1-\text{turnout}(t)$
is directly observable (Section~\ref{sec:proxies}), the $A$-compartment
provides an independent observation equation, which in principle
identifies $\delta$ and $\rho$ jointly from the co-movement of
vote shares and turnout across elections.  The co-movement prediction
(P3) in Section~\ref{sec:proxies} is an untested version of this
identification: the sign and approximate magnitude of $\Delta A /
\Delta V$ constrains $\delta/\rho$.  Precise estimation, however,
requires a full stochastic formulation that we leave to future work.

\paragraph{Shock decomposition.}
Each shock has a state component $\Delta_k$ (the fraction of $C$
immediately transferred to $A$) and a structural component $\Delta\beta_k$
(the permanent increase in $\beta$).  The state component produces a
turnout dip at the following election, while the structural component
is reflected in the post-shock long-run floor $V^*_k$.  Disentangling
the two components within a single election cycle requires strong
timing assumptions or auxiliary data on between-election survey attitudes.

\paragraph{Limitations and future work.}\label{sec:open}
On the mathematical side, the main open problem is a sharper transient theory
for the asymmetric four-group model, including explicit bounds on surge
duration and peak size under repeated asymmetric shocks, and a more explicit
global theory for structural-shock sequences with parameter drift.  Stochastic
forcing and time-periodic shocks would further allow the study of threshold
crossing and ratchet effects outside the autonomous setting.

The main methodological message is that reversible surges and irreversible
political shifts are distinct dynamical phenomena and should not be described
by the same model: the baseline is appropriate when crises are transient,
the four-group extension when data suggest a ratchet in the long-run radical
floor.



\begin{thebibliography}{99}

\bibitem{abels2024}
C.~M.~Abels \textit{et al.},
``Dodging the autocratic bullet: Enlisting behavioural science to arrest
democratic backsliding,''
\textit{Behavioural Public Policy}, pp.~1--28, 2024.


\bibitem{baumann2020}
F.~Baumann, P.~Lorenz-Spreen, I.~M.~Sokolov, and M.~Starnini,
``Modeling echo chambers and polarization dynamics in social networks,''
\textit{Physical Review Letters}, vol.~124, 048301, 2020.

\bibitem{branstetter2024}
R.~Branstetter, S.~Chian, J.~Cromp, W.~L.~He, C.~M.~Lee, M.~Liu,
E.~Mansell, M.~Paranjape, T.~Pattanashetty, A.~Rodrigues,
and A.~Volkening,
``How time and pollster history affect U.S.\ election forecasts under a
compartmental modeling approach,''
\textit{SIAM J. Appl. Dyn. Syst.}, vol.~25, no.~1, pp.~160--195, 2026,
doi:10.1137/24M1719505.

\bibitem{bundeswahlleiterin2025}
Der Bundeswahlleiter,
``Ergebnisse der Bundestagswahl 2025,''
Federal Returning Officer, Germany, 2025.
Available: \url{https://www.bundeswahlleiterin.de}

\bibitem{dalton2004}
R.~J.~Dalton,
\textit{Democratic Challenges, Democratic Choices}.
Oxford University Press, 2004.

\bibitem{dalton2021}
R.~J.~Dalton,
``Party polarization and citizen attitudes,''
\textit{Political Behavior}, vol.~43, pp.~921--939, 2021.

\bibitem{deffuant2000}
G.~Deffuant, D.~Neau, F.~Amblard, and G.~Weisbuch,
``Mixing beliefs among interacting agents,''
\textit{Advances in Complex Systems}, vol.~3, pp.~87--98, 2000.

\bibitem{diekmann1990}
O.~Diekmann, J.~A.~P. Heesterbeek, and J.~A.~J. Metz.
\newblock On the definition and the computation of the basic reproduction ratio
  $R_0$ in models for infectious diseases in heterogeneous populations.
\newblock {\em Journal of Mathematical Biology}, 28(4):365--382, 1990.

\bibitem{diep2024}
H.~T.~Diep, M.~Kaufman, and S.~Kaufman,
``Application of the three-group model to the 2024 US elections,''
\textit{Entropy}, vol.~27, no.~9, art.~935, 2025.

\bibitem{vandendriessche2002}
P.~van den Driessche and J.~Watmough.
\newblock Reproduction numbers and sub-threshold endemic equilibria for
  compartmental models of disease transmission.
\newblock {\em Mathematical Biosciences}, 180:29--48, 2002.

\bibitem{eurobarometer2013}
European Commission,
``Standard Eurobarometer 80: Public Opinion in the European Union,''
Autumn 2013.

\bibitem{gles2021}
S.~Ro{\ss}teutscher, M.~Debus, T.~Faas, and H.~Schoen,
``GLES Cross-Section 2021, Pre- and Post-Election,''
GESIS Data Archive, Cologne. ZA7702 Version~2.1.0, 2023,
doi:10.4232/1.14170.

\bibitem{grillo2024}
E.~Grillo, Z.~Luo, M.~Nalepa, and C.~Prato,
``Theories of democratic backsliding,''
\textit{Annual Review of Political Science}, vol.~27, pp.~381--400, 2024.

\bibitem{haggard2021}
S.~Haggard and R.~Kaufman,
\textit{Backsliding: Democratic Regress in the Contemporary World}.
Cambridge University Press, 2021.

\bibitem{hegselmann2002}
R.~Hegselmann and U.~Krause,
``Opinion dynamics and bounded confidence: models, analysis, and simulation,''
\textit{Journal of Artificial Societies and Social Simulation},
vol.~5, no.~3, 2002.

\bibitem{hirsch1988}
M.~W.~Hirsch,
``Systems of differential equations that are competitive or cooperative.
III.~Competing species,''
\textit{Nonlinearity}, vol.~1, no.~1, pp.~51--71, 1988.

\bibitem{levitsky2018}
S.~Levitsky and D.~Ziblatt,
\textit{How Democracies Die}.
Crown Publishers, 2018.

\bibitem{lorenz2007}
J.~Lorenz,
``Continuous opinion dynamics under bounded confidence: A survey,''
\textit{International Journal of Modern Physics C}, vol.~18, pp.~1819--1838,
2007.


\bibitem{ministere2024}
Ministère de l'Intérieur,
``Résultats des élections législatives 2024,''
République française, 2024.
Available: \url{https://www.resultats-elections.interieur.gouv.fr}

\bibitem{mudde2019}
C.~Mudde,
\textit{Populist Radical Right Parties in Europe}.
Cambridge University Press, 2019.

\bibitem{norris2019}
P.~Norris and R.~Inglehart,
\textit{Cultural Backlash: Trump, Brexit, and Authoritarian Populism}.
Cambridge University Press, 2019.

\bibitem{perko2001}
L.~Perko,
\textit{Differential Equations and Dynamical Systems}, 3rd~ed.
Springer, 2001.

\bibitem{pew2014}
Pew Research Center,
``Political polarization in the American public,'' 2014.

\bibitem{scipy2020}
P.~Virtanen \textit{et al.},
``SciPy 1.0: Fundamental algorithms for scientific computing in Python,''
\textit{Nature Methods}, vol.~17, pp.~261--272, 2020.

\bibitem{strogatz1994}
S.~H.~Strogatz,
\textit{Nonlinear Dynamics and Chaos}.
Perseus Books, 1994.

\bibitem{torgler2024}
B.~Torgler,
``Dynamics of democratic backsliding due to polarization,''
BITA Working Paper No.~6, Queensland University of Technology, 2024.

\bibitem{yang2020}
J.~Yang \textit{et al.},
``Satisficing dynamics in political competition,''
\textit{SIAM Review}, vol.~64, no.~1, pp.~168--204, 2022.

\bibitem{volkening2020}
A.~Volkening, D.~F.~Linder, M.~A.~Porter, and G.~A.~Rempala,
``Forecasting elections using compartmental models of infection,''
\textit{SIAM Review}, vol.~62, no.~4, pp.~837--865, 2020.

\bibitem{zeeman1993}
E.~C.~Zeeman,
``Hopf bifurcations in competitive three-dimensional Lotka--Volterra systems,''
\textit{Dynamics and Stability of Systems}, vol.~8, no.~3, pp.~189--216, 1993.

\end{thebibliography}
\end{document}